\newtheorem{theorem}{Theorem}[section]
\newtheorem{lemma}[theorem]{Lemma}
\newtheorem{corollary}[theorem]{Corollary}
\newtheorem{remark}[theorem]{Remark}
\newtheorem{proposition}[theorem]{Proposition}
\numberwithin{equation}{section}
\newcommand{\lbl}[1]{\label{#1}}
\newcommand{\be}{\begin{equation}}
\newcommand{\ee}{\end{equation}}
\newcommand\bes{\begin{eqnarray}} \newcommand\ees{\end{eqnarray}}
\newcommand{\bess}{\begin{eqnarray*}}
\newcommand{\eess}{\end{eqnarray*}}
\newcommand{\bbbb}{\left\{\begin{aligned}}
\newcommand{\nnnn}{\end{aligned}\right.}
\newcommand{\bea}{\begin{align*}}
\newcommand{\eea}{\end{align*}}
\newcommand\ep{\varepsilon}
\newcommand\kk{\left}
\newcommand\rr{\right}
\newcommand\dd{\displaystyle}
\newcommand\df{\dd\frac}
\newcommand\yy{\infty}
\newcommand\R{\mathbb{R}}
\newcommand\ol{\overline}
\newcommand\sk{\smallskip}
\begin{document}\thispagestyle{empty}
\setlength{\baselineskip}{16pt}

\begin{center}{\LARGE\bf A viral propagation model with nonlinear}\\[2mm]
 {\LARGE\bf infection rate and free boundaries\footnote{This work was supported by NSFC Grants 11771110, 11971128}}\\[4mm]
  {\Large  Lei Li$^a$, \ Siyu Liu$^{b,c}$, \ Mingxin Wang$^{a,}$\footnote{Corresponding author. {\sl
E-mail}: mxwang@hit.edu.cn}}\\[1.5mm]
{a. School of Mathematics, Harbin Institute of Technology, Harbin 150001, PR China.\\[1.5mm]
b. School of Science and Engineering, The Chinese University of Hong Kong, Shenzhen  518172, China.\\[1.5mm]
c. School of Mathematical Sciences, University of Science and Technology of China, Hefei  230026, China.	
}
\end{center}

\date{\today}

\begin{abstract} In this paper we put forward a viral propagation model with nonlinear  infection rate and free boundaries and investigate the dynamical properties. This model is composed of two ordinary differential equations and one partial differential equation, in which the spatial range of the first equation is the whole space $\mathbb{R}$, and the last two equations have free boundaries. As a new mathematical model, we prove the existence, uniqueness and uniform estimates of global solution, and provide the criteria for spreading and vanishing, and long time behavior of the solution components $u,v,w$. Comparing with the corresponding ordinary differential systems, the {\it Basic Reproduction Number} ${\cal R}_0$ plays a different role. We find that when ${\cal R}_0\le 1$, the virus cannot spread successfully; when ${\cal R}_0>1$, the successful spread of virus depends on the initial value and varying parameters.

\textbf{Keywords}: viral propagation model, free boundaries, basic reproduction number, spreading-vanishing, long time behavior.

\textbf{AMS Subject Classification (2100)}: 35K57, 35B40, 35R35, 92D30

\end{abstract}

\section{Introduction}
\underline{Background}
\
In order to clarify the pathogenesis of diseases and seek
effective treatment measures, viral dynamics have been a hot research topic
(cf. \cite{Wei,Per}), which usually cannot be answered by biological
experimental methods alone but require the help of mathematical models. For this
reason, a simple model was introduced few decades ago by Nowak and Bangham
\cite{Noba}. See also Nowak and May \cite{NoM}. The basic model of viral dynamics is the
following set of differential equations
\bes\begin{cases}
	u'= \theta-au-buw,\\
	v'=buw-cv,\\
	w'=kv-qw,
\end{cases}\lbl{x}\ees
where $u$, $v$ and $w$ represent the population of uninfected cells, infected cells
and viruses, respectively; uninfected cells
are produced at a constant rate $\theta$ and with death rate $au$; $cv$
is the death rate of infected cells; virus
particles $w$ infect uninfected cells with rate $buw$, and meanwhile virus particles
are produced by infected cells with rate $kv$ and have death rate $qw$. It has been shown
that if the {\it Basic Reproduction Number} ${\cal R}_0={\theta kb}/(acq)<1$, then the
system returns to the uninfected state $(\theta/a,0,0)$. If ${\cal R}_0>1$, then the
system will converge to the unique positive equilibrium state $\big(\frac{qc}{kb}, \, \frac{\theta}{c}-\frac{aq}{kb}, \, \frac{\theta
	k}{qc}-\frac{a}{b}\big)$. This indicates that in the initial stage of infection,
if each infected cell infects less than one cell on average, then the infection cannot
spread; if each infected cell infects ${\cal R}_0>1$ cells on average, the number of infected
cells increases and the number of uninfected cells declines.

\sk\underline{Mathematical Model} \ To investigate the impact of spatial dynamics on this model, Stancevic et al. \cite{Sta} extended this model to include spatially random diffusion and spatially directed chemotaxis. Invoked by their ideas, we give
the basic model assumptions as follows:

\sk(i) A nonlinear infection rate can happen due to saturation at high virus concentration, where the infectious fraction is so high that exposure is very likely. Moreover, with the increase of the virus concentration the living environment for cells becomes worse and worse. Thus, it is reasonable for us to assume that the rate of infection for virus and the virion production rate for infected cells are both nonlinear. Here we use
\[f_1(u,w)=\theta-au-\frac{buw}{1+w}, \ \ f_2(u,v,w)=\frac{buw}{1+w}-cv, \ \ f_3(v,w)=\frac{kv}{1+w}-qw\]
instead of the three terms in the right hand side of \eqref{x}.

\sk(ii) We assume that the major spatial dispersal comes from the moving (diffusion)
of viruses in vivo, while both the
uninfected and infected cells are immobile (do not diffuse). So we add only a
diffusion term to the differential equation of
viruses;

\sk(iii) Since the infected cells are caused by viruses, their distribution range is
the same;

\sk(iv) The distribution of viruses and infected cells is a local range, which is
small relative to the distribution of uninfected
cells, so we think that uninfected cells are distributed over the whole space.
Such kind of assumptions have been used in
the species invasion models (cf. \cite{DL2,ZW2014,ZhaoW16} for example);

\sk(v) Initially, viruses are distributed over a local range $\Omega_0$ (the
initial habitat). They will spread from boundary to
expand their habitat as a result of the spatial dispersal freely. That is, as time
$t$ increases, $\Omega_0$ will evolve
into expanding region $\Omega(t)$ with expanding front $\partial\Omega(t)$. Initial
function  $w_0(x)$, and as a result
$v_0(x)$, will evolve into positive functions $w(t,x)$ and $v(t,x)$ which vanish on
the moving boundary $\partial\Omega(t)$;

\sk(vi) For simplicity, we restrict our problem to the one dimensional case. Based on
the {\it deduction of free boundary
	conditions} given in \cite{5-hdu12}, we have the following free boundary conditions
$$g'(t)=-\mu w_x(t,g(t)), \ \ \ h'(t)=-\beta w_x(t,h(t)).$$

All of these assumptions (i)-(vi) suggest the following model, which governs
the spatial and temporal evolution of
viruses and cells, as well as free boundaries:
\be\begin{cases}
	u_t=f_1(u,w),  &t>0, \ -\infty<x<\infty,\\
	v_t=f_2(u,v,w), &t>0, \ g(t)<x<h(t),\\
	w_t-dw_{xx}=f_3(v,w), &t>0, \ g(t)<x<h(t),\\
	v(t,x)=w(t,x)=0, &t>0, \ x\notin(g(t),h(t)),\\
	g'(t)=-\mu w_x(t,g(t)),\,\,\,h'(t)=-\beta w_x(t,h(t)), &t\ge0,\\
	u(0,x)=u_0(x), &-\infty<x<\infty,\\
	v(0,x)=v_0(x), \ \ \ w(0,x)=w_0(x), &-h_0\le x\le h_0,\\
	h(0)=-g(0)=h_0,
	\label{1.1}
\end{cases}\ee
where $x=g(t)$ and $x=h(t)$ are the moving boundaries to be determined together
with $u(t,x)$, $v(t,x)$ and $w(t,x)$; $d,~\theta,~a,~b,~c,~k,~q,~\mu,~\beta,~h_0$ are positive constants.

Denote by $C^{1-}(I)$ the space of Lipschitz continuous functions in $I$. We
assume that the initial functions $u_0,v_0,w_0$
satisfy
\begin{equation}\label{1.2}
\left\{\begin{array}{ll}
u_0\in C^{1-}(\mathbb{R})\cap L^\yy(\mathbb{R}), \ \ v_0\in C^{1-}([-h_0,h_0]), \
\ w_0\in W^2_p((-h_0,h_0)),\\[.5mm]
v_{0}(\pm h_{0})=w_{0}(\pm h_{0})=0, \ \ w'(-h_0)>0, \ \ w'(h_0)<0,\\[.5mm]
u_{0}>0 \ \ \text{in }\,\mathbb{R},\ \ \ v_0,w_0>0 \ \ \text{in}\,(-h_{0},h_{0})
\end{array}\rr.
\end{equation}
with $p>3$. Denote by $L_0$ and $L_*$ the Lipschitz constant of $u_0$ and $v_0$,
respectively.

Partially degenerate reaction-diffusion systems, which mean that several diffusion
coefficients are zeros, have been increasingly applied to epidemiology, population biology etc; see \cite{VL,MG}, for example. Some researchers have introduced the Stefan type free boundary to the partially degenerate systems, please refer to \cite{Ahn, LHWdcdsb20, LW, WC-NA15} and the references therein.

\underline{Aims and Main Results} \ This paper concerns with the dynamics of
\eqref{1.1}. The global existence, uniqueness, regularity and uniform estimates in time $t$ of solution are first studied. Then a spreading-vanishing dichotomy is established, i.e., either

(i) \underline{{\it Spreading}} (virus persistence): the virus successfully
infects the uninfected cells and spreads itself to the
uninfected area in the sense that $\dd\lim_{t\to\yy}h(t)=-\lim_{t\to\yy}g(t)=\infty$, and
\bess
\dd\limsup_{t\to\yy}\|v(t,\cdot)\|_{C([g(t),h(t)])}>0, \
\dd\limsup_{t\to\yy}\|w(t,\cdot)\|_{C([g(t),h(t)])}>0.
\eess
In addition, if $\mathcal{R}_0+\sqrt{\mathcal{R}_0}>b/a$, where ${\cal R}_0={\theta kb}/(acq)$, then
\bess
\dd\underline{u}_\infty\le\liminf_{t\to\infty}u(t,x)\le\limsup_{t\to\infty}u(t,x)\le\bar{u}_\infty,\\
\dd\underline{v}_\infty\le\liminf_{t\to\infty}v(t,x)\le\limsup_{t\to\infty}v(t,x)\le\bar{v}_\infty,\\
\dd\underline{w}_\infty\le\liminf_{t\to\infty}w(t,x)\le\limsup_{t\to\infty}w(t,x)\le\bar{w}_\infty
\eess
locally uniformly in $\mathbb{R}$ for some positive constants $\underline{u}_\infty$, $\bar{u}_\infty$, $\underline{v}_\infty$, $\bar{v}_\infty$, $\underline{w}_\infty$ and $\bar{w}_\infty$. Particularly, under a stronger assumption that $b\le 2a$, we will derive
\[\lim_{t\to\yy}u(t,x)=u^*,\ \ \lim_{t\to\yy}v(t,x)=v^*,\ \ \lim_{t\to\yy}w(t,x)=w^* \ \ {\rm locally~uniformly~in~}\mathbb{R},\]
where $(u^*,v^*,w^*)$ is the unique positive root of \eqref{3.10};

or

(ii) \underline{{\it Vanishing}} (virus dies out): the virus $w$ and the
infected cells $v$ will vanish in a bounded area, i.e.,
$-\yy<\dd\lim_{t\to\yy}g(t)<\lim_{t\to\yy}h(t)<\yy$ and
$$	\dd\lim_{t\to\infty}\|v(t,\cdot)\|_{C([g(t),\,h(t)])}=
\lim_{t\to\infty}\|w(t,\cdot)\|_{C([g(t),\,h(t)])}=0, \ \
\lim_{t\to\infty}u=\theta/{a} \ \ \mbox{uniformly\, in } \ \mathbb{R}.$$
Moreover, $\dd\lim_{t\to\yy}h(t)-\lim_{t\to\yy}g(t)\le \pi\sqrt{acd/(\theta kb-acq)}$ if ${\cal R}_0>1$.

\sk As for the {\it Basic Reproduction Number} ${\cal R}_0=\theta kb/(acq)$, in our results, we will show that it plays a different role, comparing
with the corresponding ordinary differential systems.  When ${\cal R}_0\le 1$, {\it
	vanishing} always happens, that is, the virus
cannot spread successfully. On the other hand, when ${\cal R}_0>1$, we have a
criterion as follows: if the initial occupying
area $[-h_0,h_0]$ is beyond a critical size, namely $2h_0\ge
\pi\sqrt{acd/(\theta kb-acq)}$, then {\it spreading} happens
regardless of the moving parameter $\mu$, $\beta$ and initial population density
$(u_0,v_0,w_0)$. While $2h_0<\pi\sqrt{acd/(\theta kb-acq)}$, whether {\it spreading} or {\it vanishing} happens
depends on the initial population density
$(v_0,w_0)$ and the moving parameter $\mu$ and $\beta$.

The paper is organized as follows. Section 2 concerns with the existence,
uniqueness and uniform estimates of global solution. In Section 3 we give some preliminaries which will be used later. In Section 4 we study the long time behavior of solution components $u,v,w$,
and in Section 5 we discuss the criteria for {\it spreading} and {\it vanishing}.
At the last section, we give a brief discussion.

Before ending this section we mention that in recent years, more and more free
boundary problems of reaction diffusion systems have been introduced to describe the dynamics of species after the pioneering work \cite{DL10}. Interested readers
can refer to, except for the above cited papers, \cite{DWZ15, GW, GW15, WZjdde14, WZhang17} for competition models, \cite{WWmma18,Wjde14,WZjde18} for prey-predator models.

\section{Existence, uniqueness and uniform estimates of solution of (\ref{1.1})}

In this section we prove the global existence and
uniqueness of the solution to problem (\ref{1.1}).
For convenience, we first introduce some notations.
Denote
\begin{center}
	$A_1=\max\left\{\|u_{0}\|_\infty,\,\theta/a\right\},\ \ B_1=\|v_{0}\|_\infty+1,
	\ \ B_2=\|w_{0}\|_\infty+1,$\\[1mm]
	$\mathcal{A}=\{a,b,c,d,k,q,h_0,\mu,\beta,\alpha,
	A_1,B_1,B_2,\|w_0\|_{W^2_p((-h_0,h_0))},w'_0(\pm h_0)\},$\\[1mm]
	$\Pi_T=[0,T]\times\mathbb{R},\,\ \ \Delta_T=[0,T]\times [-1,1],\,\ \
	D^{T}_{g,h}=\left\{0\le t\le T,\ g(t)<x<h(t)\right\}$.
\end{center}
Let $X$ be a Banach space and $\varphi,\psi\in X$. Denote
$\|\varphi,\,\psi\|_X=\max\{\|\varphi\|_X,\,\|\psi\|_X\}$  for
simplicity.

\begin{theorem}{\rm(Local solution)}\label{th2.1}\, For any given $\alpha\in(0,1)$
	and $p>3/(1-\alpha)$, there exists a $T>0$
	such that the problem \eqref{1.1} has a unique local solution $(u,v,w,g,h)\in
	C^{1,1-}(\Pi_{T})\times C^{1,1-}(\ol
	D^T_{g,h})\times W^{1,2}_p(D^T_{g,h})\times [C^{1+\frac{\alpha}2}([0,T])]^2$.
	Moreover,
	\bess
	u>0\ \ {\rm in} \ \ \Pi_T;\ \ v,w>0\ \ {\rm in}\ \ D^T_{g,h};\ \
	g'(t)<0,\ \ h'(t)>0\ \ {\rm in}\ \ [0, T],
	\eess
	where $u\in C^{1,1-}(\overline D^T_{g,h})$ means that $u$ is continuously differentiable
	in $t\in[0,T]$ and is Lipschitz
	continuous in $x\in[g(t),h(t)]$ for all $t\in[0,T]$.
\end{theorem}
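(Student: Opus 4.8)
The plan is to use the by-now standard ``straighten the free boundary plus contraction mapping'' scheme for Stefan-type problems (as in Du--Lin and its descendants \cite{DL10}), adapted to the partially degenerate structure here in which only $w$ carries diffusion while $u$ and $v$ solve ordinary differential equations in time. Fix $\alpha,p$ as in the statement and a small $T>0$ to be chosen. I would work in the closed ball
\[\mathcal{S}_T=\big\{(g,h)\in [C^{1+\frac{\alpha}{2}}([0,T])]^2:\ g(0)=-h_0,\ h(0)=h_0,\ \|g-g_*,\,h-h_*\|_{C^{1+\frac{\alpha}{2}}([0,T])}\le 1\big\},\]
where $g_*,h_*$ are the affine functions carrying the prescribed initial data and slopes $g_*'(0)=-\mu w_0'(-h_0)<0$, $h_*'(0)=-\beta w_0'(h_0)>0$. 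For a candidate $(g,h)\in\mathcal{S}_T$ I introduce the change of variables $y=\frac{2x-g(t)-h(t)}{h(t)-g(t)}$ carrying $x\in[g(t),h(t)]$ onto $y\in[-1,1]$ and set $\tilde w(t,y)=w(t,x)$; the diffusion equation becomes a uniformly parabolic equation on the fixed cylinder $\Delta_T=[0,T]\times[-1,1]$ whose coefficients are explicit rational expressions in $g,h,g',h'$, bounded and H\"older on $\mathcal{S}_T$, and whose source is the transformed nonlinearity $\frac{kv}{1+w}-qw$.

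Given $(g,h)$, I build the three components in order. The equation for $u$ is an ODE in $t$ at each fixed $x\in\mathbb{R}$, with $w$ extended by $0$ off $(g(t),h(t))$; integrating $u_t=\theta-au-\frac{buw}{1+w}$ yields a unique $u=\mathcal{U}[w]$, trapped in $[0,A_1]$ and Lipschitz in $x$ with a constant controlled by $L_0$ and the regularity of $w$. Similarly $v$ solves $v_t=\frac{buw}{1+w}-cv$, integrated from $t=0$ with datum $v_0(x)$ for $x\in(-h_0,h_0)$ and from the entry time $t_x$ (where $g(t_x)=x$ or $h(t_x)=x$) with datum $0$ for the newly exposed points. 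The transformed $w$-equation is then solved by the $L^p$ parabolic theory, producing $\tilde w\in W^{1,2}_p(\Delta_T)$ with an a priori bound depending only on $\mathcal{A}$. Finally I update the boundary via the Stefan conditions,
\[\hat g(t)=-h_0-\mu\int_0^t w_x(s,g(s))\,{\rm d}s,\qquad \hat h(t)=h_0-\beta\int_0^t w_x(s,h(s))\,{\rm d}s,\]
which defines the solution map $\mathcal{F}:(g,h)\mapsto(\hat g,\hat h)$ whose fixed point is the sought solution.

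The crux is the boundary regularity that closes this loop. Because $p>3/(1-\alpha)$ and the spatial dimension is one, the parabolic Sobolev embedding
\[W^{1,2}_p(\Delta_T)\hookrightarrow C^{\frac{1+\alpha}{2},\,1+\alpha}(\overline{\Delta_T})\]
gives $\tilde w_y\in C^{\frac{\alpha}{2},\,\alpha}$ up to $y=\pm1$; transporting back, the traces $w_x(t,g(t))$ and $w_x(t,h(t))$ lie in $C^{\frac{\alpha}{2}}([0,T])$, whence $\hat g',\hat h'\in C^{\frac{\alpha}{2}}([0,T])$ and $\hat g,\hat h\in C^{1+\frac{\alpha}{2}}([0,T])$, so that $\mathcal{F}$ maps $\mathcal{S}_T$ into itself once $T$ is small. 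For the contraction I compare the images of two pairs $(g_1,h_1),(g_2,h_2)\in\mathcal{S}_T$: their difference perturbs the straightening coefficients and the nonlinearities by quantities controlled by $\|g_1-g_2,h_1-h_2\|_{C^{1+\alpha/2}}$, and the $L^p$ estimates plus the embedding convert this into a bound on $\|\hat g_1-\hat g_2,\hat h_1-\hat h_2\|_{C^{1+\alpha/2}}$ carrying a factor that is $o(1)$ as $T\to0$. Banach's fixed point theorem then yields a unique solution on $[0,T]$, and uniqueness in the original formulation follows by undoing the transformation.

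I expect the main obstacle to be precisely this contraction estimate, in two respects: establishing the Lipschitz dependence on $(g,h)$, up to the moving boundary, of both the saturation terms $\frac{buw}{1+w}$, $\frac{kv}{1+w}$ and the transformed coefficients; and controlling the spatial (Lipschitz) regularity of the nondiffusive components $u$ and $v$—especially $v$ on the freshly exposed region, where the entry-time map must itself be shown Lipschitz—well enough for $v$ to serve as an admissible source for the parabolic $w$-equation. Once the fixed point is obtained, the qualitative conclusions are routine: $u\ge u_0 e^{-(a+b)t}>0$ and $v>0$ follow from the ODEs, since $\frac{buw}{1+w}\ge0$ keeps $v$ positive in the interior; $w>0$ in $D^T_{g,h}$ follows from the strong maximum principle because $w_0>0$ and $w=0$ on the lateral boundary; and the Hopf boundary lemma gives $w_x(t,g(t))>0$ and $w_x(t,h(t))<0$, hence $g'(t)<0<h'(t)$ on $[0,T]$.
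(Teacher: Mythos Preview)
Your overall strategy is sound and in the standard Du--Lin spirit, but it differs from the paper's and contains one structural gap. The paper does \emph{not} iterate on $(g,h)$. Instead it fixes a candidate $u$ in a Lipschitz ball $\mathbb{Y}^T_{u_0}$, invokes the existing result \cite{LW} to solve the free boundary subproblem \eqref{2.1} for $(v,w,g,h)$ as a black box, then solves the $u$-ODE \eqref{2.3} with the resulting $w$ to produce $\tilde u$, and shows $u\mapsto\tilde u$ is a contraction in the sup norm on $\Pi_T$. The hard part---your ``main obstacle''---is exactly the paper's Step~4: estimating $|V(t,x_1(t,y))|$ on the freshly exposed region, done by a case split on whether $x_1$ lies in both moving intervals, in one only, or in $[-h_0,h_0]$, using the entry times $t_{x_1},t'_{x_1}$ and the Lipschitz bound \eqref{2.4} on $v_i$. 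So you have correctly identified where the work lies.

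The gap in your scheme is the claim that, for fixed $(g,h)$, you can ``build the three components in order.'' You cannot: $u=\mathcal{U}[w]$ depends on $w$, $v$ depends on $(u,w)$, and $w$ depends on $v$ through its source $\frac{kv}{1+w}$, so the dependency is circular. You need either an inner fixed point (e.g.\ iterate on $w$ in $C(\overline{\Delta_T})$ or $W^{1,2}_p(\Delta_T)$, computing $u$ then $v$ then updating $w$), or to enlarge the outer fixed-point space to include $w$ alongside $(g,h)$. Either fix is routine, but as written the proposal skips it. What your approach buys, once repaired, is self-containment (no appeal to \cite{LW}); what the paper's buys is modularity and a shorter proof, since the $(v,w,g,h)$ subproblem---including the straightening, the $L^p$ estimate, the embedding, and the Stefan update---is already packaged in \cite{LW}, leaving only the $u$-layer and the delicate $V$-estimate to handle here.
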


\begin{proof} Invoked by the proof of \cite[Theorem 2.1]{LHWdcdsb20} and \cite[Theorem 1.1]{LW}, we
	divide the proof into several steps. Unless otherwise specified in the proof, positive constants $C_i$ depend only on $\mathcal{A}$.
	
\sk{\it Step 1:} Given $T>0$, we say $u\in C^{1-}_{x}(\Pi_T)$ if there is a constant
	$L_u(T)$ such that
	\[|u(t,x_1)-u(t,x_2)|\le L_u(T)|x_1-x_2|, \ \ \forall \ x_1,x_2\in\mathbb{R},\,\,
	0<t\le T.\]
	For $s>0$, define
	\[\mathbb X_{u_0}^s=\left\{\phi\in C(\Pi_s): \phi(0,x)=u_0(x),\ 0\le\phi\le
	A_1\right\}.\]
	For any given $u\in\mathbb X_{u_0}^1\cap C^{1-}_{x}(\Pi_1)$ we consider the
	following problem
	\bes
	\begin{cases}
		v_t=f_2(u(t,x),v,w), &t>0, \ g(t)<x<h(t),\\
		w_t-dw_{xx}=f_3(v,w), &t>0, \ g(t)<x<h(t),\\
		v(t,x)=w(t,x)=0, &t>0, \ x\notin(g(t),h(t)),\\
		g'(t)=-\mu w_x(t,g(t)),\, h'(t)=-\beta w_x(t,h(t))&t\ge0,\\
		v(0,x)=v_0(x), w(0,x)=w_0(x), &|x|\le h_0,\\
		h(0)=-g(0)=h_0.
	\end{cases}
	\label{2.1}
	\ees
	By \cite[Theorem 1.1]{LW}, we know that for some $0<T\ll1$, \eqref{2.1} has a
	unique solution $(v,w,g,h)\in C^{1,1-}(\ol
	D^T_{g,h})\times C^{\frac{1+\alpha}{2},1+\alpha}(\ol D^T_{g,h})\times
	[C^{1+\frac{\alpha}2}([0,T])]^2$. Moreover,
	\bes\label{2.2}
	\begin{cases}
		-w_x(t,h(t)),\ w_x(t,g(t))>0 \ \ \mbox{in} \ \ [0,T];\ \ 0<v\le B_1,\ 0<w\le B_2 \
		\ \mbox{in} \ \ D^T_{g,h},\\
		\|w\|_{W^{1,2}_p(D^T_{g,h})}+\|w_x\|_{C(\ol D^T_{g,h})}
		+\|g,h\|_{C^{1+\frac{\alpha}2}([0,T])}\le M,
	\end{cases}
	\ees
	where $M$ depends only on $\mathcal{A}$.
	
\sk{\it Step 2:} For the function $w(t,x)$ obtained in Step 1, we consider the
	following parameterized ODE problem,
	\bes\left\{\begin{aligned}
		&\tilde u_t=f_1(\tilde u,w(t,x)), && (t,x)\in (0,T]\times\mathbb{R},\\
		&\tilde u(0;x)=u_0(x)>0, && x\in\mathbb{R}.
	\end{aligned}\right.\label{2.3}\ees
	By the standard ODE theory, \eqref{2.3} has a unique solution $\tilde u\in
	C^{1,1-}(\Pi_T)$ and $0<\tilde u\le A_1$.
	
	Now we estimate the Lipschitz constant of $\tilde{u}$ in $x$.
	Since it can be easily derived from \eqref{2.2} that $|w(t,x_1)-w(t,x_2)|\le M|x_1-x_2|$ for any given $(t,x_1),\ (t,x_2)\in \Pi_T$, we have
	\bess |\tilde u(t,x_1)-\tilde u(t,x_2)|&=&\left|\int_{0}^{t}\tilde
	u_t(s,x_1)-\tilde u_t(s,x_2){\rm
		d}s+u_0(x_1)-u_0(x_2)\right|\\
	&\le& \int_{0}^{t}|\tilde u_t(s,x_1)-\tilde u_t(s,x_2)|{\rm d}s+L_0|x_1-x_2|\\
	&\le& L_0|x_1-x_2|+\int_{0}^{t}|f_1(\tilde u(s,x_1),w(s,x_1))-f_1(\tilde u(s,x_2),w(s,x_2))|{\rm d}s\\
	&\le& \int_{0}^{t}(a+b)|\tilde u(s,x_1)-\tilde u(s,x_2)|{\rm
		d}s+(bA_1TM+L_0)|x_1-x_2|.
	\eess
	Then noticing $0<T\le1$ and making use of the Gronwall inequality, we obtain
	\bess |\tilde u(t,x_1)-\tilde u(t,x_2)| \le(bA_1M+L_0)e^{a+b}|x_1-x_2|.
	\eess
	This shows that $L_{\tilde{u}}=(bA_1M+L_0)e^{a+b}$ is the Lipschitz constant of
	$\tilde{u}$. Define
	\[\mathbb Y_{u_0}^T=\left\{\phi\in C(\Pi_T): \phi(0,x)=u_0(x),\ 0\le\phi\le A_1,\
	|\phi(t,x)-\phi(t,y)|\le L_{\tilde{u}}|x-y|\right\}.\]
	Obviously, $\mathbb Y_{u_0}^T$ is complete with the metric
	$d(\phi_1,\phi_2)=\sup_{(t,x)\in\Pi_{T}}|\phi_1(t,x)-\phi_2(t,x)|$. The above
	analysis allows us to define a map $\mathcal{F}(u)=\tilde u$, and $\mathcal{F}$
	maps $\mathbb Y_{u_0}^T$ into itself.
	
\sk{\it Step 3:} We are in the position to prove that $\mathcal{F}$ is a contraction
	mapping in $Y_{u_0}^T$ for sufficiently small $T$. In fact, for $i=1,2$, let $v_i,w_i,g_i,h_i$ be the unique solution of \eqref{2.1} with $u=u_i$. By arguing as in the proof of \cite[Theorem 1.1]{LW}, we can show
	that there exists a constant $L_v$, which only depends on $ \mathcal{A}$, such that for any given $(t,x_1),(t,x_2)\in \ol D^T_{g_i,h_i}$,
	\bes\label{2.4}
	|v_i(t,x_1)-v_i(t,x_2)| \le L_v|x_1-x_2|.
	\ees
	
	Denote $U=u_1-u_2$, $\tilde U=\tilde u_1-\tilde u_2$, $V=v_1-v_2$ and $W=w_1-w_2$. Since $\tilde u_i$ satisfy
	\bess\left\{\begin{aligned}
		&\tilde u_{i,t}=f_1(\tilde u_i,w_i), && (t,x)\in
		(0,T]\times\mathbb{R},\\
		&\tilde u_i(0,x)=u_0(x)>0,  && x\in\mathbb{R},
	\end{aligned}\right.\eess
	it follows that, for any $(t,x)\in \Pi_T$,
	\bess|\tilde U(t,x)|\le \int_{0}^{t}(a+b)|\tilde U(s,x)|{\rm
		d}s+bA_1T\|W\|_{L^{\infty}(\Pi_T)}.\eess
	By virtue of the Gronwall inequality again, it yields
	\bes|\tilde U(t,x)|\le bA_1e^{a+b}T\|W\|_{L^{\infty}(\Pi_T)}.\label{2.5}\ees
	
	The following arguments are devoted to an estimate of $\|W\|_{L^{\infty}(\Pi_T)}$.
	Evidently, $w_i$ satisfy
	\bess
	\left\{\begin{aligned}
		&w_{i,t}-dw_{i,xx}=f_3(v_i,w_i),& &0<t\le T,~g_i(t)<x<h_i(t),\\
		&w_i(t,x)=0,& &0<t\le T,~x\notin(g_i(t),h_i(t)),\\
		&w_i(0,x)=w_0(x),&&|x|\le h_0.
	\end{aligned}\right.
	\eess
	We straighten the boundaries and define
	\[x_i(t,y)=\frac{(h_i(t)-g_i(t))y+h_i(t)+g_i(t)}2,\ \ z_i(t,y)=w_i(t,x_i(t,y)), \ \ r_i(t,y)=v_i(t,x_i(t,y)).\]
	For simplicity, we introduce the following notations
	$\xi=\xi_1-\xi_2$, $\zeta=\zeta_1-\zeta_2$, $z=z_1-z_2$, $r=r_1-r_2$,
	$h=h_1-h_2$, $g=g_1-g_2$, where
	\bess \xi_i(t)=\df{4}{(h_i(t)-g_i(t))^2},\ \
	\zeta_i(t,y)=\df{h'_i(t)+g'_i(t)}{h_i(t)-g_i(t)}+\df{(h'_i(t)-g'_i(t))y}{h_i(t)-g_i(t)}.
	\eess
	Then $z$ satisfies
	\bess
	\left\{\begin{aligned}
		&z_t-d\xi_1z_{yy}-\zeta_1z_{y}
		=d\xi z_{2,yy}+\zeta z_{2,y}+\frac{kr}{1+z_1}-\frac{kr_2z}{(1+z_1)(1+z_2)}-qz, &&0<t\le T,~|y|<1,\\
		&z(t,\pm 1)=0, &&0\le t\le T,\\
		&z(0,y)=0, &&|y|\le 1.
	\end{aligned}\right.
	\eess
	By the $L^p$ estimates for parabolic equations, we see
	\[\|z\|_{W^{1,2}_p(\Delta_T)}\le
	C_1\big(\|g,\,h\|_{C^1([0,T])}+\|r\|_{C(\Delta_T)}\big).\]
	
	We now estimate $\|r\|_{C(\Delta_T)}$. For any given $(t,y)\in \Delta_T$, it
	follows that
	\bess |r(t,y)|=|v_1(t,x_1(t,y))-v_2(t,x_2(t,y))|
	\le |V(t,x_1(t,y))|+|v_2(t,x_1(t,y))-v_2(t,x_2(t,y))|.
	\eess
	It follows from the inequality \eqref{2.4} that
	\[|v_2(t,x_1(t,y))-v_2(t,x_2(t,y))|\le C_2\|g,h\|_{C^1([0,T])}.\]
	Additionally, we can prove the following inequality:
	\bes
	|V(t,x_1(t,y))|\le
	C_3\kk(\|g,h\|_{C^1([0,T])}+T\|U,\,W\|_{L^{\infty}(\Pi_T)}\rr).
	\label{x1}\ees
	Its proof will be put in the next step on account of the length. Thus we have
	\bess\|r\|_{C(\Delta_T)}\le
	C_4\left(\|g,\,h\|_{C^1([0,T])}+T\|U,\,W\|_{L^{\infty}(\Pi_T)}\right).\eess
	Then it follows that
	\[\|z\|_{W^{1,2}_p(\Delta_T)}\le
	C_5\left(\|g,\,h\|_{C^1([0,T])}+T\|U,\,W\|_{L^{\infty}(\Pi_T)}\right).\]
	By utilizing the similar methods in Step 2 of \cite[Theorem 2.1]{LSWjmaa20} and
	the embedding theorem: $[z_y]_{C^{\frac\alpha 2,\alpha}(\Delta_T)}\le C\|z\|_{W^{1,2}_p(\Delta_T)}$ for
	some positive constant $C$ independent of $T^{-1}$ (\cite[Theorem 1.1]{Wdcds19}), we can show that
	\[\|W\|_{L^{\infty}(\Pi_T)}\le
	C_6\left(T\|W\|_{L^{\infty}(\Pi_T)}+\|U\|_{L^{\infty}(\Pi_T)}\right).\]
	Hence
	\[\|W\|_{L^{\infty}(\Pi_T)}\le 2C_6\|U\|_{L^{\infty}(\Pi_T)} \ \ \
	\mbox{if} \ \ 0<T\ll 1.\]
	This combined with \eqref{2.5} arrives at
	\[\|\tilde U\|_{L^{\infty}(\Pi_T)}\le
	C_7T\|U\|_{L^{\infty}(\Pi_T)}\le \df{1}{2}\|U\|_{L^{\infty}(\Pi_T)} \
	\ \ \mbox{if} \ \ 0<T\ll 1.\]
	As a consequence, $\mathcal{F}$ is a contraction mapping and there exists a
	unique local solution $(u,v,w,g,h)$. Moreover
	the desired properties of the local solution can be obtained from the above
	arguments.
	
\sk{\it Step 4:} In this step, we are going to tackle the estimate \eqref{x1}, which
	will be divided into several cases. By the
	definition of $x_1(t,y)$, it is easy to see that $g_1(t)\le x_1(t,y)\le h_1(t)$. We
	denote $x_1=x_1(t,y)$ for simplicity.
	
\sk{\it Case 1:} $x_1\notin (g_2(t),h_2(t))$. In this case $v_2(t,x_1)=0$, and
	either $g_1(t)\le x_1\le g_2(t)$ or $h_2(t)\le x_1\le h_1(t)$. We only deal with the former case. Hence
	\begin{align*}
	|V(t,x_1)|&=|v_1(t,x_1)-v_1(t,g_1(t))|\le L_v|x_1-g_1(t)|\\
	&\le L_v|g_2(t)-g_1(t)|\le L_v\|g\|_{C^1([0,T])}.
	\end{align*}
	
	\sk{\it Case 2:} $x_1\in (g_2(t),h_2(t))$ and either $x_1>h_0$ or $x_1<-h_0$. We deal with only the case $x_1>h_0$. Then we can uniquely find
	$0<t_{x_1}, t'_{x_1}\le t$ such that $h_1(t_{x_1})=x_1$ and $h_2(t'_{x_1})=x_1$. Then
	$v_1(t_{x_1},x_1)=v_2(t'_{x_1},x_1)=0$. Without loss of generality, we assume
	$t'_{x_1}>t_{x_1}$. Then $h_1(t_{x_1}')>h_1(t_{x_1})=x_1=h_2(t_{x_1}')$, $x_1\in(g_1(s), h_1(s))\cap(g_2(s),h_2(s))$ for all $t_{x_1}'<s\le t$ and $x_1\in(g_1(t_{x_1}'), h_1(t_{x_1}'))\setminus(g_2(t_{x_1}'), h_2(t_{x_1}'))$. Hence,
	\[|V(t_{x_1}',x_1)|=v_1(t_{x_1}',x_1)\le L_v \|g,\,h\|_{C([0,T])}\]
	by the conclusion of Case 1. Integrating the differential equation of $v_i$ from $t_{x_1}'$ to $s \,(t_{x_1}'<s\le t)$ we obtain
	\bess
	v_1(s,x_1)&=&v_1(t_{x_1}',x_1)+\int_{t_{x_1}'}^s f_2(u_1, v_1,w_1)\big|_{x=x_1}{\rm d}\tau,\\
	v_2(s,x_1)&=&\int_{t_{x_1}'}^sf_2(u_2, v_2,w_2)\big|_{x=x_1}{\rm d}\tau.
	\eess
	It then follows that
	\bess
	|V(s,x_1)|&\le&v_1(t_{x_1}',x_1)+\int_{t_{x_1}'}^s
	\left|\frac{bu_1w_1}{1+w_1}-\frac{bu_2w_2}{1+w_2}+c(v_2-v_1)\rr|_{x=x_1}{\rm d}\tau\\
	&\le& L_v\|g,\,h\|_{C([0,T])}+
	TC_8\big(\|V(\cdot,x_1)\|_{C[t'_{x_1},t])}+\|U,\,W\|_{L^{\infty}(\Pi_T)}\big),
	\eess
	where $C_8=\max\{c,\,bA_1,\,b\}$. It follows from that
	\[|V(t,x_1)|\le C_{10}\kk(\|g,\,h\|_{C^1([0,T])}+T\|U,\,W\|_{L^{\infty}(\Pi_T)}\rr)\]
	if $T>0$ is sufficiently small.
	
\sk{\it Case 3:} $x_1\in (g_2(t),h_2(t))$ and $x_1\in [-h_0,h_0]$. In this case we
	can derive
	\[|V(t,x_1)|\le C_{11}T\|U,\, W\|_{L^{\infty}(\Pi_T)}\]
	by using similar methods. Since it is actually much simpler, we omit the details.
	In conclusion, we have proved the estimate \eqref{x1}.
\end{proof}

\begin{theorem}{\rm(Global solution)}\label{th2.2} The problem \eqref{1.1} has a
	unique global solution $(u,v,w,g,h)$, and
	there exist four positive constants $A_i$, $i=1,2,3,4$, such that
	\bess &(u,v,w,g,h)\in C^{1,1-}(\Pi_{\infty})\times C^{1,1-}(\Pi_{\infty})\times
	W^{1,2}_p(D^{\infty}_{g,h})\times
	[C^{1+\frac{\alpha}2}([0,\infty])]^2,\\[1mm]
	&0<u\le A_1 \ {\rm in} \ \Pi_{\infty};\ \ 0<v\le A_2, \ 0<w\le A_3 \ {\rm in} \
	D^{\infty}_{g,h};\ \ 0<-g'(t),h'(t)\le A_4 \ {\rm in} \ [0,\infty),
	\eess
	where $A_1=\max\left\{\|u_{0}\|_\infty,\,\theta/a\right\}$.
\end{theorem}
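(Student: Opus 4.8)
The plan is to upgrade the local solution of Theorem~\ref{th2.1} to a global one by the standard continuation scheme: derive a priori bounds on $u,v,w$ and on the free boundary speeds that are \emph{independent of the existence time}, and then show the maximal existence time must be infinite. The key structural feature I would exploit is that the reaction terms, although nonlinear, are quasi-monotone and admit clean one-sided bounds, so that comparison principles apply throughout. All constants produced will be tracked to depend only on $\mathcal{A}$.

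First I would establish the $L^\infty$ bounds in the cascading order $u\to v\to w$, since the saturation nonlinearities decouple for the upper estimates. For $u$, the bound $f_1(u,w)=\theta-au-buw/(1+w)\le\theta-au$ and comparison with the ODE $U'=\theta-aU$ give $u\le\max\{\|u_0\|_\infty,\theta/a\}=A_1$, while $f_1(0,w)=\theta>0$ forces $u>0$. Because $w/(1+w)<1$, the infection term satisfies $buw/(1+w)\le bA_1$, so $v_t\le bA_1-cv$ and hence $v\le\max\{\|v_0\|_\infty,bA_1/c\}=:A_2$. With $v$ now bounded, $f_3(v,w)=kv/(1+w)-qw\le kA_2-qw$, and the parabolic maximum principle on the moving domain yields $w\le\max\{\|w_0\|_\infty,kA_2/q\}=:A_3$. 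Positivity of $v,w$ in $D^\infty_{g,h}$ follows from the strong maximum principle and Hopf's lemma exactly as in the local theory, and Hopf's lemma also gives the strict signs $w_x(t,g(t))>0>w_x(t,h(t))$, hence $-g'(t),h'(t)>0$.

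The technical heart is the uniform upper bound on the boundary speeds, i.e.\ $-w_x(t,g(t)),\,-w_x(t,h(t))\le C$ for a constant $C=C(\mathcal{A})$, which through the Stefan conditions gives $0<-g'(t),h'(t)\le A_4$. I would build a parabolic barrier near each front: on a strip of fixed width $\ell=1/M$ adjacent to $x=h(t)$, compare $w$ with the auxiliary function $\omega(t,x)=A_3[\,2M(h(t)-x)-M^2(h(t)-x)^2\,]$. Using $f_3(v,w)\le kA_2$, $w\le A_3$, and $h'>0$, one checks for $M=M(\mathcal{A})$ large that $\omega_t-d\omega_{xx}\ge 2dM^2A_3\ge kA_2\ge f_3(v,w)$ in the strip and $\omega\ge w$ on its parabolic boundary (the choice of $M$ also dominating the initial slope $w_0'(h_0)$). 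Comparison gives $w\le\omega$ with equality at $x=h(t)$, so $w_x(t,h(t))\ge\omega_x(t,h(t))=-2MA_3$, whence $h'(t)=-\beta w_x(t,h(t))\le 2\beta MA_3$; the estimate for $-g'(t)$ is symmetric. The point is that $v\le A_2$ renders $f_3$ uniformly bounded above, so this reduces to the classical single-equation barrier and the resulting $M,\ell$ depend only on $\mathcal{A}$, not on $T$.

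Finally I would close the continuation. Let $(0,T_{\max})$ be the maximal interval of existence and suppose $T_{\max}<\infty$. The uniform $L^\infty$ bounds together with the boundary-speed bound control $\|g,h\|_{C^{1+\alpha/2}}$, and parabolic $L^p$ and Schauder estimates then give uniform control of $\|w\|_{W^{1,2}_p}$ and of the spatial traces $w(t,\cdot)$ (which embed, using $p>3$, into a fixed $C^{1+\gamma}$ ball) on $[0,T_{\max})$. Hence every component, and the fronts, extend continuously up to $t=T_{\max}$ with regularity sufficient to serve as admissible initial data; re-applying Theorem~\ref{th2.1} from a time slightly below $T_{\max}$ extends the solution past $T_{\max}$, contradicting maximality, so $T_{\max}=\infty$. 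Uniqueness on $[0,\infty)$ follows by patching the local uniqueness. I expect the main obstacle to be the barrier step of the third paragraph, namely verifying that the strip width and the constant $M$ can be fixed in terms of $\mathcal{A}$ alone so that the gradient bound is genuinely time-independent; once this is in place, every other ingredient is a routine comparison or regularity argument.
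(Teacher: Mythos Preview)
Your proposal is correct and follows essentially the same approach as the paper's proof: cascading $L^\infty$ bounds $u\to v\to w$ via comparison, a Du--Lin type barrier near each front to bound $|w_x|$ at the boundary (the paper cites \cite[Lemma~2.1]{WZjdde17} for this step, which is precisely the construction you wrote out), and a continuation argument (the paper cites \cite[Corollary~1.1]{Wdcds19}). The only difference is that you spell out the barrier and continuation details explicitly, and you give concrete values for $A_2,A_3$, whereas the paper leaves these as references.
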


\begin{proof} It follows from Theorem \ref{th2.1} that the problem \eqref{1.1} has
	a unique local solution $(u,v,w,g,h)$ for
	some $0<T\ll1$ and $g'(t)<0$, $h'(t)>0$ for $0\le t\le T$.
	
	It is easy to show that $0<u\le A_1$ in $\Pi_{T}$. Recalling the equations of $(v,w)$ we can readily conclude that there exists $A_2,\ A_3>0$ such that $0<v\le A_2$, $0<w\le A_3$ in $D^T_{g,h}$.
	Making use of the similar arguments in the proof of \cite[Lemma 2.1]{WZjdde17}, we
	can show that there exists constant $A_4>0$ , which only depends on the initial data, such that $0<-g'(t),\,h'(t)\le A_4$  in $[0,T]$.
	
	With above estimates, we can extend the local solution uniquely to the
	global solution, and
	\bess
	(u,v,w,g,h)\in C^{1,1-}(\Pi_{\infty})\times C^{1,1-}(\ol
	D^{\infty}_{g,h})\times W^{1,2}_p(D^{\infty}_{g,h})\times
	[C^{1+\frac{\alpha}2}([0,\infty))]^2;
	\eess
	see \cite[Corollary 1.1]{Wdcds19} for the details. It follows from the standard parabolic
	regularity theory that $(u,v,w,g,h)$ is the
	unique classical solution of \eqref{1.1}. Combining $v(t,x)=0$ for
	$x\notin(g(t),h(t))$ and the equation satisfied by $v$, we easily derive that $v\in C^{1,1-}(\Pi_{\infty})$. The proof is ended.
\end{proof}

Since $g'(t)<0$, $h'(t)>0$, there exist $g_{\yy}\in[-\yy,0)$ and
$h_{\yy}\in(0,\yy]$ such that
\[\lim_{t\to\infty}g(t)=g_\infty, \ \ \ \lim_{t\to\infty}h(t)=h_\infty.\]
The case $h_\infty=-g_\infty=\infty$ is called {\it Spreading}, and the case $h_\infty-g_\infty<\infty$ is called {\it Vanishing}.

\begin{theorem}{\rm(Uniform estimates)}\label{th2.3}	Let $(u,v,w,g,h)$ be the unique global solution of \eqref{1.1}. Then there exists a constant $C>0$ such that
	\bes
	\|w(t,\cdot)\|_{C^1([g(t),h(t)])}\le C, \ \ \|g',\,h'\|_{C^{{\alpha}/2}([1,\yy))}\leq C, \ \ \forall\ t\ge 1.
	\label{x.a}\ees
\end{theorem}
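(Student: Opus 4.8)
The plan is to bootstrap the uniform $L^\infty$ bounds of Theorem \ref{th2.2} into the stated $C^1$ and $C^{\alpha/2}$ bounds by running time-translation-invariant parabolic estimates on windows of unit length. By Theorem \ref{th2.2} we have $0<u\le A_1$, $0<v\le A_2$, $0<w\le A_3$ and $0<-g'(t),h'(t)\le A_4$ for all $t\ge 0$; in particular the forcing $f_3(v,w)=kv/(1+w)-qw$ in the $w$-equation is bounded by a constant depending only on $A_2,A_3,k,q$, and the free boundaries are uniformly Lipschitz in time. Everything then reduces to a uniform $C^{\frac{1+\alpha}2,1+\alpha}$ estimate for $w$ up to the moving boundary.

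I would obtain this by straightening each free boundary \emph{locally} rather than globally. Fix a small $\rho>0$. Near the right boundary, on the strip $\{h(t)-\rho\le x\le h(t)\}$ (available once $h(t)-g(t)\ge\rho$), set $y=x-h(t)$ and $\hat w(t,y)=w(t,y+h(t))$; then $\hat w_t-d\hat w_{yy}-h'(t)\hat w_y=f_3$ on $\{-\rho\le y\le 0\}$ with $\hat w(t,0)=0$, and since $|h'|\le A_4$ the operator is uniformly parabolic with bounded coefficients. On each window $[t_0-1,t_0]$ with $t_0\ge 1$, multiplying $\hat w$ by a temporal cutoff that vanishes at $t=t_0-1$ (so only the uniformly bounded forcing, plus a bounded $\chi'\hat w$ term, enters) and applying the interior--boundary $L^p$ estimate gives $\|\hat w\|_{W^{1,2}_p}\le C$ on $[t_0-\tfrac12,t_0]$ with $C$ independent of $t_0$; the embedding $W^{1,2}_p\hookrightarrow C^{\frac{1+\alpha}2,1+\alpha}$ for $p>3/(1-\alpha)$ (as in \cite[Theorem 1.1]{Wdcds19}) then yields a uniform bound on $\|\hat w_y\|_{C^{\frac\alpha2,\alpha}}$, hence on $w_x$ in the strip. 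The same argument applies at $x=g(t)$, while on the complementary interior region $w$ solves a uniformly parabolic equation with bounded right-hand side, so interior $L^p$ estimates on unit boxes control $w_x$ there. Patching the two strips and the interior yields $\|w(t,\cdot)\|_{C^1([g(t),h(t)])}\le C$ for $t\ge 1$. In the remaining case $h(t)-g(t)<\rho$ the habitat stays trapped in $[2h_0,\rho)$, so the global straightening $y=(2x-g-h)/(h-g)$ is already non-degenerate and the same estimates apply.

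For the velocities, the crucial point is that in the straightened variable the right boundary sits at the \emph{fixed} point $y=0$, so $h'(t)=-\beta w_x(t,h(t))=-\beta\hat w_y(t,0)$. The uniform $C^{\frac\alpha2,\alpha}$ bound on $\hat w_y$ therefore gives $|h'(t)-h'(s)|\le C|t-s|^{\alpha/2}$ for $t,s$ in a common window; together with the uniform $L^\infty$ bound on $h'$ for $|t-s|\ge 1$, this produces $\|h'\|_{C^{\alpha/2}([1,\infty))}\le C$, and symmetrically for $g'$. Evaluating at the fixed boundary point sidesteps the difficulty that the physical point $x=h(t)$ leaves the domain at earlier times.

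The main obstacle is precisely the uniform parabolicity of the transformed problem in the \emph{spreading} regime. Under the global straightening the diffusion coefficient $4d/(h(t)-g(t))^2$ degenerates to zero as the habitat expands, so the associated $L^p$ constants would blow up and no uniform-in-$t$ estimate survives. Localizing the straightening to a fixed-width strip adjacent to each boundary, where the change of variables is a pure translation introducing only the bounded advection $-h'(t)\hat w_y$, is what keeps the constants independent of $t$; checking that this localization together with the interior estimates genuinely covers the whole moving interval with time-independent constants is the delicate part.
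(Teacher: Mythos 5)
Your argument is correct and is essentially the method the paper itself invokes: the published proof simply defers to the window-by-window parabolic estimates of the cited references (the global straightening when $h_\infty-g_\infty<\infty$, local arguments when $h_\infty-g_\infty=\infty$), which is exactly the scheme you carry out, including the key step of reading $h'(t)=-\beta\hat w_y(t,0)$ at the fixed point of the straightened coordinate to get the $C^{\alpha/2}$ bound. The only cosmetic difference is that you run the local straightening of width $\rho\le h_0$ near each front in both regimes at once (note $h(t)-g(t)\ge 2h_0$ always, so your final ``remaining case'' is vacuous), whereas the paper splits into the two cases via two separate citations.
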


\begin{proof} Remember $0\le v\le A_2$, $0\le w\le A_3$. The estimates \eqref{x.a} can be proved by using analogous methods in \cite[Theorem 2.1]{Wjfa16} for the case $h_\yy-g_\yy<\yy$ and \cite[Theorem 2.2]{WZjde18} for the case $h_\yy-g_\yy=\yy$. We omit the details here.
\end{proof}

\section{Preliminaries}

In this section, we will show some preliminaries which are crucial in the later
parts. First we will investigate an eigenvalue
problem and analyze the properties of its principal eigenvalue which will pave the ground for later discussion.	It is well known that the eigenvalue problem
\bess
\left\{\begin{aligned}
	&d\hat \phi_{xx}+a_{11}\hat\phi=\rho \hat\phi, &&l_1<x<l_2,\\
	&\hat\phi(l_i)=0,&&i=1,2
\end{aligned}\right.
\eess
has a principle eigenpair $(\rho_1,\hat\phi_1)$, where
\[\rho_1=a_{11}-\df{d\pi^2}{(l_2-l_1)^2},~ ~ ~\hat\phi_1(x)=\cos
\df{\pi(2x-l_2-l_1)}{2(l_2-l_1)}.\]

Now we consider the following eigenvalue problem
\bes
\left\{\begin{aligned}
	&d\phi_{xx}+a_{11}\phi+a_{12}\psi =\lambda \phi, &&l_1<x<l_2,\\
	&a_{21}\phi+a_{22}\psi =\lambda \psi, &&l_1<x<l_2,\\
	&\phi(l_i)=0,&&i=1,2
\end{aligned}\right.\label{3.1}
\ees
with $a_{12}$, $a_{21}>0$ and $a_{11}$, $a_{22}<0$. It is clear that if $(\lambda,(\phi,\psi))$ is an eigenpair of \eqref{3.1}, then $\lambda\not=a_{22}$ and $\psi(l_i)=0$, $i=1,2$. Define
\[\mathcal{L}=\begin{pmatrix}
d\partial_{xx}+a_{11}  && a_{12} \\
a_{21}  && a_{22}
\end{pmatrix},\]
and choose the domain of $\mathcal{L}$:
\[\mathcal{D}\big(\mathcal{L}\big)=\left\{(\phi,\psi)\in H^2((l_1,l_2))\times
L^2((l_1,l_2)):\phi(l_i)=0, \ i=1,2\right\}.\]
Similarly to the proof of \cite[Theorem 3.1]{WC-NA15}, we can prove the following
results by means of \cite[Theorem 2.3, Remark 2.2]{WZh}. The details are omitted here.

\begin{theorem}\label{th3.1} Let $\sigma(\mathcal{L})$ be the spectral set of
	$\mathcal{L}$ and
	$\mathfrak{s}(\mathcal{L}):=\sup\{Re \lambda: \lambda\in\sigma(\mathcal{L})\}$.
	Then the following properties hold true:
	
	\sk{\rm(i)}\, $\mathfrak{s}(\mathcal{L})$ is the principal eigenvalue of \eqref{3.1} with positive eigenvectors
	$(\phi_1,\psi_1)$;
	
	\sk{\rm(ii)}\,
	$\mathfrak{s}(\mathcal{L})=\df{1}{2}\big[\rho_1+a_{22}+\sqrt{(\rho_1-a_{22})^2+4a_{12}a_{21}}\big]$
	and has the
	same sign with $\rho_1-a_{12}a_{21}/{a_{22}}$;
	
	\sk{\rm(iii)}\, $\mathfrak{s}(\mathcal{L})$ is strictly monotone increasing in the
	length of the interval $(l_1,l_2)$ and strictly
	monotone decreasing in $d$.
\end{theorem}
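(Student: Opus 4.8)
The plan is to establish the three properties of the principal eigenvalue of the $2\times2$ cooperative-type operator $\mathcal{L}$ by reducing everything to the scalar eigenvalue problem for $d\hat\phi_{xx}+a_{11}\hat\phi=\rho\hat\phi$, whose principal eigenpair $(\rho_1,\hat\phi_1)$ is given explicitly in the excerpt. The central algebraic observation is that for the system \eqref{3.1}, the second equation $a_{21}\phi+a_{22}\psi=\lambda\psi$ is purely algebraic (no derivatives act on $\psi$), so whenever $\lambda\neq a_{22}$ we may solve $\psi=\frac{a_{21}}{\lambda-a_{22}}\phi$ and substitute into the first equation. This yields the scalar eigenvalue relation
\be
d\phi_{xx}+\Big(a_{11}+\df{a_{12}a_{21}}{\lambda-a_{22}}\Big)\phi=\lambda\phi,\ \ l_1<x<l_2,\ \ \phi(l_i)=0.
\ee
Thus $(\lambda,\phi)$ solves \eqref{3.1} (with $\psi$ recovered algebraically) if and only if $\phi$ is a Dirichlet eigenfunction of $d\partial_{xx}$ on $(l_1,l_2)$ with eigenvalue $\lambda-a_{11}-\frac{a_{12}a_{21}}{\lambda-a_{22}}$.

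For part (i) and part (ii), I would proceed as follows. The Dirichlet eigenvalues of $-d\partial_{xx}$ are $\frac{d\,n^2\pi^2}{(l_2-l_1)^2}$ for $n\ge1$, with the smallest one $n=1$ corresponding to the positive eigenfunction $\hat\phi_1(x)=\cos\frac{\pi(2x-l_2-l_1)}{2(l_2-l_1)}$, i.e. $\rho_1=a_{11}-\frac{d\pi^2}{(l_2-l_1)^2}$ is the principal scalar eigenvalue of $d\partial_{xx}+a_{11}$. Matching $\phi$ to this principal mode forces
\be
\lambda-\df{a_{12}a_{21}}{\lambda-a_{22}}=\rho_1,
\ee
which rearranges to the quadratic $(\lambda-\rho_1)(\lambda-a_{22})=a_{12}a_{21}$, or $\lambda^2-(\rho_1+a_{22})\lambda+(\rho_1a_{22}-a_{12}a_{21})=0$. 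Solving gives exactly the two roots $\frac12[\rho_1+a_{22}\pm\sqrt{(\rho_1-a_{22})^2+4a_{12}a_{21}}]$; the larger root is the claimed $\mathfrak{s}(\mathcal{L})$. Since $a_{12}a_{21}>0$, the larger root satisfies $\lambda>a_{22}$, so $\psi=\frac{a_{21}}{\lambda-a_{22}}\phi$ has the same sign as $\phi$, and choosing $\phi=\hat\phi_1>0$ makes both eigenvector components positive, proving (i). For the sign claim in (ii), I would argue that $\mathfrak{s}(\mathcal{L})>0$, $=0$, or $<0$ according as the value of the quadratic $Q(\lambda)=\lambda^2-(\rho_1+a_{22})\lambda+(\rho_1a_{22}-a_{12}a_{21})$ at $\lambda=0$ is negative, zero, or positive (using that the parabola opens upward and $\mathfrak{s}(\mathcal{L})$ is its larger root together with $a_{22}<0$ to locate the other root); since $Q(0)=\rho_1a_{22}-a_{12}a_{21}=a_{22}(\rho_1-a_{12}a_{21}/a_{22})$ and $a_{22}<0$, the sign of $Q(0)$ is opposite to that of $\rho_1-a_{12}a_{21}/a_{22}$, giving the desired equivalence. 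That $\mathfrak{s}(\mathcal{L})=\sup\{\mathrm{Re}\,\lambda:\lambda\in\sigma(\mathcal{L})\}$ is the largest real point of the spectrum follows because using the $n$-th Dirichlet mode only decreases $\rho_1$ to $\rho_n<\rho_1$, hence decreases the larger quadratic root.

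For part (iii), the monotonicity is inherited directly from the explicit formula. Writing $\mathfrak{s}(\mathcal{L})=\frac12[\rho_1+a_{22}+\sqrt{(\rho_1-a_{22})^2+4a_{12}a_{21}}]$ and noting $\frac{\partial\mathfrak{s}}{\partial\rho_1}=\frac12\big[1+\frac{\rho_1-a_{22}}{\sqrt{(\rho_1-a_{22})^2+4a_{12}a_{21}}}\big]>0$, I see that $\mathfrak{s}(\mathcal{L})$ is strictly increasing in $\rho_1$. Since $\rho_1=a_{11}-\frac{d\pi^2}{(l_2-l_1)^2}$ is itself strictly increasing in the interval length $l_2-l_1$ and strictly decreasing in $d$, the chain rule transfers both monotonicities to $\mathfrak{s}(\mathcal{L})$.

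The genuinely delicate point, and the reason the statement invokes \cite[Theorem 2.3, Remark 2.2]{WZh} rather than the elementary computation above, is the spectral-theoretic framework: one must verify that $\mathfrak{s}(\mathcal{L})$ as defined via the spectrum of the unbounded operator $\mathcal{L}$ on $H^2\times L^2$ (with the stated domain) truly coincides with this principal eigenvalue of the associated eigenvalue problem, and that it is a simple, dominant eigenvalue with strictly positive eigenvector. Because the operator is not self-adjoint and acts on a product space where the second component carries no diffusion, the clean variational characterization is unavailable; the rigorous justification that $\mathfrak{s}(\mathcal{L})$ is attained, real, algebraically simple, and supports a positive eigenvector relies on the Krein–Rutman / positive-semigroup machinery packaged in \cite{WZh}. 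The expository plan is therefore to carry out the explicit reduction and quadratic analysis to exhibit the candidate eigenpair and verify the formula and monotonicity by hand, while citing \cite[Theorem 3.1]{WC-NA15} and \cite{WZh} to legitimize identifying this candidate with the spectral bound $\mathfrak{s}(\mathcal{L})$ and to supply the abstract positivity and simplicity that the elementary computation cannot furnish on its own.
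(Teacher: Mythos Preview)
Your proposal is correct and follows essentially the same route the paper has in mind: the paper omits the proof entirely, citing \cite[Theorem 3.1]{WC-NA15} and \cite[Theorem 2.3, Remark 2.2]{WZh}, but the discussion immediately following the theorem (the identity $\rho_1=\lambda_1-\dfrac{a_{12}a_{21}}{\lambda_1-a_{22}}$ and the relation \eqref{bb}) confirms that the intended argument is exactly your algebraic elimination of $\psi$ and reduction to the scalar principal eigenvalue $\rho_1$. Your explicit derivation of the quadratic, the sign analysis via $Q(0)=a_{22}(\rho_1-a_{12}a_{21}/a_{22})$, and the monotonicity via $\partial\mathfrak{s}/\partial\rho_1>0$ are all sound, and you correctly identify that the only non-elementary ingredient---the identification of this computed eigenvalue with the spectral bound $\mathfrak{s}(\mathcal{L})$ of the partially degenerate operator on $H^2\times L^2$---is precisely what the cited Krein--Rutman type result from \cite{WZh} supplies.
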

By Theorem \ref{th3.1}, we can easily deduce the following result.
\begin{corollary}\label{cor3.1}Define
	$\Gamma=a_{11}-\frac{a_{12}a_{21}}{a_{22}}$. Let $\lambda_1$ be the principal
	eigenvalue of the problem \eqref{3.1}. Then the following properties are valid:
	
	\sk{\rm(i)}\, If $\Gamma\le0$, then $\lambda_1<0$ for any $d>0$ and $(l_1,l_2)$;
	
	\sk{\rm(ii)}\, If $\Gamma>0$, we fix the domain $(l_1,l_2)$ and let
	$d^*(l_1,l_2)=\Gamma(l_2-l_1)^2\pi^{-2}$. Then
	$\lambda_1>0$ when $0<d<d^*(l_1,l_2)$, $\lambda_1=0$ when  $d=d^*(l_1,l_2)$, and
	$\lambda_1<0$ when $d>d^*(l_1,l_2)$;
	
	\sk{\rm(iii)}\, If $\Gamma>0$, we fix $d>0$ and set $L^*(d)=\pi\sqrt{d/\Gamma}$.
	Then
	$\lambda_1>0$ when $l_2-l_1>L^*(d)$, $\lambda_1=0$ when $l_2-l_1=L^*(d)$ and
	$\lambda_1<0$ when  $l_2-l_1<L^*(d)$.
\end{corollary}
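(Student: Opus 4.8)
The plan is to reduce all three assertions to the sign analysis of a single scalar quantity, using Theorem \ref{th3.1}(ii) as the only nontrivial input. By that result the principal eigenvalue $\lambda_1=\mathfrak{s}(\mathcal{L})$ carries the same sign as $\rho_1-a_{12}a_{21}/a_{22}$, where $\rho_1=a_{11}-d\pi^2/(l_2-l_1)^2$ is the principal eigenvalue of the associated scalar problem. First I would substitute this expression for $\rho_1$ and record the identity
\[
\rho_1-\frac{a_{12}a_{21}}{a_{22}}=\Gamma-\frac{d\pi^2}{(l_2-l_1)^2},
\]
so that the sign of $\lambda_1$ coincides with the sign of $\Gamma-d\pi^2/(l_2-l_1)^2$. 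Every claim in the corollary is then merely a statement about when this last quantity is positive, zero, or negative.

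With this reduction in hand the three parts are immediate. For part (i), if $\Gamma\le 0$ then $\Gamma-d\pi^2/(l_2-l_1)^2<0$ for every $d>0$ and every bounded interval, since the subtracted term is strictly positive; hence $\lambda_1<0$ unconditionally. For part (ii), assuming $\Gamma>0$ and fixing $(l_1,l_2)$, I would rewrite the sign condition as the sign of $\Gamma(l_2-l_1)^2-d\pi^2$, a strictly decreasing function of $d$ that vanishes exactly at $d=d^*(l_1,l_2):=\Gamma(l_2-l_1)^2\pi^{-2}$; this yields $\lambda_1>0$, $\lambda_1=0$, $\lambda_1<0$ according as $d<d^*$, $d=d^*$, $d>d^*$. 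Part (iii) is the symmetric computation: fixing $d$ and viewing the quantity as a function of the interval length, $\Gamma-d\pi^2/(l_2-l_1)^2>0$ is equivalent to $(l_2-l_1)^2>d\pi^2/\Gamma$, i.e. $l_2-l_1>L^*(d):=\pi\sqrt{d/\Gamma}$, and the equality and reverse inequality are handled in exactly the same way.

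There is no genuine obstacle here: the entire analytic content sits in Theorem \ref{th3.1}(ii), and the corollary is only the translation of its sign criterion into the two monotonicity regimes (in $d$ and in the interval length). The single point worth stating with care is the ``same sign'' claim itself, should one wish to reprove it rather than cite it: starting from the explicit formula $\mathfrak{s}(\mathcal{L})=\frac{1}{2}[\rho_1+a_{22}+\sqrt{(\rho_1-a_{22})^2+4a_{12}a_{21}}]$, the inequality $\mathfrak{s}(\mathcal{L})\gtrless 0$ is equivalent to $\sqrt{(\rho_1-a_{22})^2+4a_{12}a_{21}}\gtrless-(\rho_1+a_{22})$ and, after squaring and cancelling, to $\rho_1 a_{22}\lessgtr a_{12}a_{21}$; the only subtlety is the sign-flip incurred when dividing by the negative quantity $a_{22}$ to recover $\rho_1-a_{12}a_{21}/a_{22}\gtrless 0$. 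Finally, the monotonicity statements of Theorem \ref{th3.1}(iii) provide a consistency check: $\lambda_1$ is strictly increasing in $l_2-l_1$ and strictly decreasing in $d$, which matches the direction of the thresholds $d^*$ and $L^*$ obtained above.
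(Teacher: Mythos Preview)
Your proposal is correct and matches the paper's approach exactly: the paper simply states ``By Theorem \ref{th3.1}, we can easily deduce the following result'' without giving any further proof, and your reduction via the identity $\rho_1-a_{12}a_{21}/a_{22}=\Gamma-d\pi^2/(l_2-l_1)^2$ is precisely the easy deduction the authors have in mind. Your optional reproof of the ``same sign'' claim is a bonus; note only that the squaring step tacitly assumes $-(\rho_1+a_{22})>0$, while the complementary case $\rho_1+a_{22}\ge0$ is trivial since then $\rho_1>0>a_{12}a_{21}/a_{22}$ and $\mathfrak{s}(\mathcal{L})>0$ directly.
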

Let $\lambda_1$ be the principal eigenvalue of \eqref{3.1}, that is, two components of the corresponding eigenfunction are both positive or negative. Then we have
\[\lambda_1>a_{22}, \ \rho_1=\lambda_1-{a_{12}a_{21}}/({\lambda_1-a_{22}}).\]
Thus by the uniqueness of $\rho_1$ we easily derive the uniqueness of the principal eigenvalue of \eqref{3.1}.

Let $(\mu_1,u_1)$ be the first eigenpair of $-\Delta$ with homogeneous Dirichlet
boundary condition on $(l_1,l_2)$ and ($\lambda_1,(\phi_1,\psi_1))$ be the principal eigenpair of the problem \eqref{3.1}. The direct calculation yields
\bes\begin{pmatrix}
	-d\mu_1+a_{11}  && a_{12} \\
	a_{12}  && a_{22}
\end{pmatrix}
\begin{pmatrix}
	\langle\phi_1,u_1\rangle \\
	\langle\psi_1,u_1\rangle
\end{pmatrix}=\lambda_1
\begin{pmatrix}
	\langle\phi_1,u_1\rangle \\
	\langle\psi_1,u_1\rangle
\end{pmatrix},
\label{bb}\ees
where $\langle\cdot,\cdot\rangle$ denotes the inner product in $L^2((l_1,l_2))$.

The following lemma will play an important role in the study of long time
behaviors of $(u, v,w)$ when $h_\infty-g_\infty<\infty$.

\begin{lemma}\label{l3.2}\, Let $m(t,x)$ be a bounded function, $d$, $C$,
	$\mu$ and $\eta_0$ be positive constants, and constant $x_0<\eta_0$. Let
	$\eta\in C^1([0,\infty))$, $w\in W^{1,2}_p(D_T)$ and $w_0\in
	W^2_p((x_0,\eta_0))$ for any
	$T>0$ with some $p>1$, and $w_x\in C(D)$, where
	$D_T=\{(t,x):0<t<T, x_0<x<\eta(t)\}$,
	$D=\{(t,x):0\le t\yy,x_0<x\le \eta(t)\}$.
	Assume that $(w,\eta)$ satisfies
	\bess\left\{\begin{array}{lll}
		w_t-d w_{xx}+m(t,x)w_x\geq -C w, \ \ &t>0,\ \ x_0<x<\eta(t),\\[.5mm]
		w\ge 0,\ \ \ &t>0, \ \ x=x_0,\\[.5mm]
		w=0,\ \ \eta'(t)\geq-\mu w_x, \ &t>0,\ \ x=\eta(t),\\[.5mm]
		w(0,x)=w_0(x)\ge,\,\not\equiv 0,\ \ &x\in (x_0,\eta_0),\\[.5mm]
		\eta(0)=\eta_0,
	\end{array}\right.\eess
	and $\dd\lim_{t\to\infty} \eta(t)=\eta_\infty<\infty$,
	$\dd\lim_{t\to\infty} \eta'(t)=0$,
	\[\|w(t,\cdot)\|_{C^1([x_0,\,\eta(t)])}\leq M, \ \, \forall\, t\ge 1\]
	for some constant $M>0$. Then $\dd\lim_{t\to\infty}\,\max_{x_0\leq x\leq
		\eta(t)}w(t,x)=0$.
\end{lemma}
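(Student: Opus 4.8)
The plan is to prove $\lim_{t\to\infty}\max_{x_0\le x\le\eta(t)}w(t,x)=0$ by a contradiction argument combined with a comparison/parabolic-estimate scheme. The hypotheses state that the free boundary $\eta(t)$ converges to a finite limit $\eta_\infty$ and $\eta'(t)\to 0$, together with a uniform $C^1$ bound on $w$. The key intuition is that if $w$ did not decay to $0$, then the Stefan-type boundary condition $\eta'(t)\ge -\mu w_x(t,\eta(t))$ would force $\eta(t)$ to keep advancing at a non-vanishing rate, contradicting $\eta'(t)\to 0$ and the finiteness of $\eta_\infty$. So I would argue that persistence of $w$ is incompatible with the stalling of the boundary.

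First I would set up the contradiction: suppose $\limsup_{t\to\infty}\max_{x_0\le x\le\eta(t)}w(t,x)=2\delta>0$. Then there is a sequence $t_n\to\infty$ and points $x_n\in[x_0,\eta(t_n)]$ with $w(t_n,x_n)\ge\delta$. Using the uniform $C^1$ bound $M$ on $w$, the points $x_n$ stay bounded (since $\eta_\infty<\infty$) and, passing to a subsequence, $x_n\to\bar x\in[x_0,\eta_\infty]$. The next step is a standard parabolic-normalization trick: define the shifted functions $w_n(t,x)=w(t+t_n,x)$ on the shifted domains, and use the $W^{1,2}_p$ membership of $w$ plus the uniform $C^1$ estimate and the interior parabolic $L^p$ estimates to extract (via Sobolev embedding $W^{1,2}_p\hookrightarrow C^{(1+\beta)/2,\,1+\beta}$ for $p>3$) a convergent subsequence $w_n\to W$ in $C^{1,1-}_{\rm loc}$ on the limiting domain $\{x_0<x<\eta_\infty\}$. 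The limit $W$ is nonnegative, satisfies $W(0,\bar x)\ge\delta>0$, and by the strong maximum principle applied to the limiting differential inequality $W_t-dW_{xx}+m\,W_x\ge-CW$, we conclude $W>0$ in the interior for $t>0$.

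The \textbf{main obstacle} is controlling the behavior at the moving boundary $x=\eta(t)$ and extracting quantitative information from $\eta'(t)\to 0$. Here the argument splits by whether the concentration point $\bar x$ lies in the interior or on the boundary. If $\bar x=\eta_\infty$, I would invoke the Hopf lemma at the right boundary: positivity of $W$ together with $W=0$ on $x=\eta(t)$ forces $W_x(t,\eta_\infty)<0$ bounded away from $0$ on compact $t$-intervals, hence $-\mu w_x(t+t_n,\eta(t+t_n))$ is bounded below by a positive constant for large $n$. Combined with the boundary condition $\eta'\ge-\mu w_x$, this yields $\eta'(t+t_n)\ge c>0$ along a nontrivial $t$-interval, contradicting $\eta'(t)\to 0$. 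The more delicate interior case $\bar x<\eta_\infty$ requires transporting the interior positivity to the boundary: since $W>0$ on a full neighborhood and $\eta_\infty$ is finite, one can use the comparison principle (building a subsolution supported near the boundary, or simply propagating positivity via the strong maximum principle up to times where $W$ becomes positive near $\eta_\infty$) to again deduce $W_x(\cdot,\eta_\infty)<0$, and close the argument exactly as before.

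Putting these together, the non-decay assumption always produces a uniform positive lower bound on the advance rate $\eta'$ along a sequence of time-windows, which is incompatible with $\eta'(t)\to0$ and with $\eta(t)$ converging to the finite value $\eta_\infty$ (the latter integrating to $\eta(t)\to\infty$). This contradiction forces $\delta=0$, i.e. $\lim_{t\to\infty}\max_{x_0\le x\le\eta(t)}w(t,x)=0$, as claimed. I would remark that the role of the Dirichlet condition at $x=x_0$ (namely $w\ge0$ there) is only to guarantee nonnegativity is preserved, so it does not interfere with the boundary-advance estimate on the right.
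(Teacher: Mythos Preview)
Your approach is correct and is precisely the standard argument to which the paper defers (it cites \cite{WZjdde14}, Theorem 2.2, and omits details): shift in time along a sequence where $w$ stays large, use the uniform $C^1$ bound and parabolic $L^p$ estimates to extract a nontrivial nonnegative limit $W$ on the fixed domain $(x_0,\eta_\infty)$, then apply the Hopf lemma at $x=\eta_\infty$ to force $-\mu w_x(t+t_n,\eta(t+t_n))\ge c>0$, contradicting $\eta'(t)\to 0$. Two small remarks: the case $\bar x=\eta_\infty$ cannot occur (since $|w(t_n,x_n)|\le M|\eta(t_n)-x_n|\to 0$), so your ``delicate interior case'' is the only one; and when passing to the limit you should note that the bounded drift $m(t+t_n,\cdot)$ need only converge weak-$*$ in $L^\infty$, which together with strong convergence of $(w_n)_x$ suffices to show $W$ satisfies a parabolic inequality with bounded coefficients, so Hopf still applies.
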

\begin{proof} When $x_0=0$ and $m(t,x)=0$, this lemma is exactly Proposition 2 in \cite{WZ-dcdsa18}; when $x_0=0$ and $m(t,x)=\gamma$ is a constant, this lemma is exactly Lemma 3.1 in \cite{ZWjmaa19}. For our present case, by the maximum principle we have $w(t,x)>0$ for $t>0$ and $x_0<x<\eta(t)$. If we follow the proof of \cite[Theorem 2.2]{WZjdde14} word by word we can prove this lemma. We will leave out the details because
	the advection term and boundary condition at $x =x_0$ do not influence the availability of the argument in \cite[Theorem 2.2]{WZjdde14}.
\end{proof}

\begin{lemma} $($Comparison principle$)$\label{l3.3} \,Let $T>0$, $\bar g,\bar h\in C^1([0,T])$ and $\bar g<\bar h$ in $[0,T]$. Let $\bar u\in C^{1,0}([0,T]\times\mathbb{R})$, $\bar v\in C^{1,0}(\overline O), \bar w\in C(\overline{O})\cap C^{1,2}(O)$ with $O=\{0<t\leq T,\,\bar g(t)<x<\bar h(t)\}$. Assume that $(\bar u, \bar v,\bar w,
	\bar g,\bar h)$ satisfies
	\bess
	\left\{\begin{aligned}
		&\bar u_{t}\ge \theta-a\bar u,& &t>0,~-\yy<x<\yy,\\
		&\bar v_t\ge f_2(\bar u, \bar v,\bar w),& &t>0,~\bar g(t)<x<\bar h(t),\\
		&\bar w_t-d\bar w_{xx}\ge f_3(\bar v,\bar w),& &t>0,~\bar g(t)<x<\bar h(t),\\
		&\bar v(t,x)=\bar w(t,x)=0,& &t>0, \ x=\bar g(t),\,\bar h(t),\\
		&\bar g'(t)\le-\mu \bar w_x(t,\bar g(t)),& &t\ge0,\\
		&\bar h'(t)\ge-\beta \bar w_x(t,\bar h(t)),& &t\ge0,\\
		&\bar v(0,x),\bar w(0,x)\geq 0, && \bar g(0)\le x\le\bar h(0).
	\end{aligned}\right.
	\eess
	If $\bar g(0)\leq-h_0,\,\bar h(0)\geq h_0$, $u_0(x)\leq\bar u(0,x)$ in $\mathbb{R}$, and $v_0(x)\leq\bar v(0,x), w_0(x)\leq\bar w(0,x)$ on $[-h_0,h_0]$. Then the solution $(u,v,w,g,h)$ of \eqref{1.1} satisfies
	\bess
	g\geq\bar g,\ h\leq\bar h \ \ {\rm on}\ \, [0,T]; \ \
	u\leq\bar u\ \  {\rm on}\ \, [0,T]\times\mathbb{R}; \ \
	v\leq\bar v, \ w\le\bar w\ \  {\rm on}\ \, \overline D_{g,h}^T,\eess
	where $D_{g,h}^T$ is defined as in the beginning of Section 2.
\end{lemma}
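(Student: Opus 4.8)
The plan is to decouple the system and treat $u$ separately from $(v,w)$. First I would prove $u\le\bar u$ by a pure ODE comparison that needs no information about the diffusion or the fronts, and then, using $u\le\bar u$, establish the ordering of the free boundaries together with $v\le\bar v$ and $w\le\bar w$ by a cooperative maximum‑principle argument interlocked with a Hopf boundary‑point lemma at the moving fronts. A small preliminary is to note that $\bar v,\bar w\ge0$ in $\overline O$: this follows from the nonnegativity of the initial and front data together with the supersolution inequalities, by the standard maximum principle for cooperative systems.

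\emph{Step 1 (the cell component).} Set $U=u-\bar u$. Since $u,w\ge0$ by Theorem~\ref{th2.2}, we have $buw/(1+w)\ge0$, hence $u_t=\theta-au-buw/(1+w)\le\theta-au$, while $\bar u_t\ge\theta-a\bar u$. Subtracting gives $U_t\le-aU$ at each fixed $x$, with $U(0,x)=u_0(x)-\bar u(0,x)\le0$; Gronwall's inequality yields $U\le0$, i.e. $u\le\bar u$ on $[0,T]\times\R$. This step is essentially free and, crucially, independent of the boundary ordering.

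\emph{Step 2 (reduction and quasimonotonicity).} Because $\partial_u f_2=bw/(1+w)\ge0$ and $u\le\bar u$, the supersolution inequality improves to $\bar v_t\ge f_2(\bar u,\bar v,\bar w)\ge f_2(u,\bar v,\bar w)$, so $\bar v$ is a supersolution of the $v$‑equation carrying the \emph{same} $u$ as the genuine solution. The $(v,w)$‑system is cooperative since $\partial_w f_2=bu/(1+w)^2\ge0$ and $\partial_v f_3=k/(1+w)\ge0$. Writing $V=\bar v-v$, $W=\bar w-w$ and applying the mean value theorem to $f_2,f_3$, I obtain on $D^T_{g,h}$ inequalities of the form $V_t+cV\ge c_2W$ and $W_t-dW_{xx}+c_1W\ge \big(k/(1+\bar w)\big)V$ with bounded nonnegative coefficients $c_1,c_2$, while $V,W\ge0$ hold initially and on $x=g(t),h(t)$ (there $v=w=0$ and $\bar v,\bar w\ge0$).

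\emph{Step 3 (ordering of the fronts; the crux).} I would argue by a first‑crossing time. Let $t_0$ be the first time the ordering $\bar g\le g<h\le\bar h$ fails, say $h(t_0)=\bar h(t_0)$ with $h'(t_0)\ge\bar h'(t_0)$. On $[0,t_0]$ the fronts are ordered, so $(g,h)\subset(\bar g,\bar h)$ and the comparison principle for the cooperative system of Step 2 gives $V,W\ge0$, i.e. $v\le\bar v$ and $w\le\bar w$, up to $t_0$. Then $W\ge0$ in $(g(t_0),h(t_0))$ with $W(t_0,h(t_0))=\bar w(t_0,\bar h(t_0))-w(t_0,h(t_0))=0$; the strong maximum principle gives $W>0$ in the interior, and Hopf's boundary‑point lemma forces $W_x(t_0,h(t_0))<0$, i.e. $\bar w_x<w_x$ at the common front. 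Hence $\bar h'(t_0)\ge-\beta\bar w_x>-\beta w_x=h'(t_0)$, contradicting $h'(t_0)\ge\bar h'(t_0)$; the symmetric computation excludes $g$ crossing $\bar g$. Thus the fronts never cross, so $g\ge\bar g$, $h\le\bar h$ on $[0,T]$, and $v\le\bar v$, $w\le\bar w$ hold on all of $\overline{D}^T_{g,h}$. The main obstacle is precisely this step: one must run the cooperative maximum principle and the Hopf lemma \emph{on a moving domain} and couple them with the front‑velocity conditions, and one must rule out the degenerate case $W\equiv0$ (where Hopf gives no strict inequality), which is handled by the strong maximum principle or a strict‑supersolution perturbation. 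As the remark following Lemma~\ref{l3.2} indicates, the whole scheme follows the template of \cite[Theorem~2.2]{WZjdde14}, the advection‑free and extra‑boundary features of that proof being irrelevant here.
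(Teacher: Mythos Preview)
Your overall strategy is sound and closely parallels the standard template, but the route you take in Step~3 differs from the paper's and has a soft spot you only half-address. The paper does \emph{not} run the first-crossing argument directly on $(u,v,w,g,h)$. Instead it perturbs the initial data: for $0<\rho<1$ it solves \eqref{1.1} with $h_0$ replaced by $\rho h_0$ and $(v_0,w_0)$ replaced by smaller functions $(v_{0,\rho},w_{0,\rho})$ supported in $(-\rho h_0,\rho h_0)$, obtaining a solution $(u_\rho,v_\rho,w_\rho,g_\rho,h_\rho)$ with \emph{strictly} separated fronts at $t=0$ (since $\rho h_0<h_0\le\bar h(0)$). For this perturbed solution the first-crossing/Hopf argument (the paper cites \cite[Lemma~3.5]{DL10}) yields strict inequalities $g_\rho>\bar g$, $h_\rho<\bar h$, $v_\rho<\bar v$, $w_\rho<\bar w$; letting $\rho\to1$ and invoking continuous dependence on the data recovers the non-strict conclusions for the original solution.

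The reason the paper takes this detour is exactly the degenerate situation you flag at the end: the hypotheses allow $\bar h(0)=h_0$ and $\bar w(0,\cdot)=w_0$, in which case your ``first time $t_0$ with $h(t_0)=\bar h(t_0)$'' is $t_0=0$, and neither the strong maximum principle nor Hopf is available there. Your suggestion of a ``strict-supersolution perturbation'' is the right instinct, but in this free-boundary setting it is cleaner to perturb the \emph{solution} side (shrink the initial interval) rather than the supersolution side, precisely because the Stefan conditions are equalities for the solution and one wants to preserve them; that is what the paper does. So your argument is essentially correct once you make the perturbation step explicit and place it \emph{before} the first-crossing argument rather than invoking it only as a patch for $W\equiv0$.
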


\begin{proof} Take $0<\rho<1$ and let $(u_\rho,v_\rho,w_\rho,g_\rho,h_\rho)$ be the corresponding unique solution of \eqref{1.1} with $(h_0, v_0, w_0)$ replaced by $(\rho h_0,v_{0,\rho}, w_{0,\rho})$, where $v_{0,\rho}(x), w_{0,\rho}(x)$ satisfy \eqref{1.2} with $h_0$ replaced by $\rho h_0$, and satisfy
	\bess
	0< v_{0,\rho}(x)\leq v_0(x),\ 0<w_{0,\rho}(x)\leq w_0(x)\ \ \, {\rm on}\ \ (-\rho h_0, \rho h_0),\eess
	as well as
	\bess\lim_{\rho\to 1}v_{0,\rho}\left(\rho x\right)= v_0(x) \ \ {\rm in}\ \ W_\yy^1((-h_0,h_0)),\ \ \ \lim_{\rho\to 1}w_{0,\rho}\left(\rho x\right)= w_0(x) \ \ \, {\rm in}\ \ W_p^2((-h_0,h_0)).\eess
	By a simple comparison consideration, we have $u_\rho\leq\bar u$ on $[0,T]\times\mathbb{R}$.
	Thus $(v_\rho,w_\rho)$ satisfies
	\bess
	\left\{\begin{aligned}
		&v_{\rho,t}\le f_2(\bar u, v_\rho, w_\rho),& &t>0,~g_\rho(t)<x<h_\rho(t),\\
		&w_{\rho,t}-d w_{\rho,xx}=f_3(v_\rho, w_\rho),& &t>0,~g_\rho(t)<x<h_\rho(t),\\
		&v_\rho(t,x)=w_\rho(t,x)=0,& &t>0, \ x\notin(g_\rho(t),h_\rho(t)),\\
		&g'_\rho(t)=-\mu w_{\rho,x}(t,g_\rho(t)),\,\,\,h'_\rho(t)=-\beta w_{\rho,x}(t,h_\rho(t)), &&t\ge0,\\[.5mm]
		&v_\rho(0,x)=v_{0,\rho}(x), \ \ \ w_\rho(0,x)=w_{0,\rho}(x), &&-\rho h_0\le x\le \rho h_0,\\[.5mm]
		& h_\rho(0)=-g_\rho(0)=\rho h_0.
	\end{aligned}\right.
	\eess
	Similarly to \cite[Lemma 3.5]{DL10}, by use of the indirect arguments and strong maximum principle we can show that $g_\rho(t)>\bar g(t), h_\rho(t)<\bar h(t)$ for $0\le t\le T$. Thus $v_\rho(t,x)<\bar v(t,x), w_\rho(t,x)<\bar w(t,x)$ for $0<t\le T$ and $g_\rho(t)\le x\le h_\rho(t)$ by the standard comparison principle. Letting $\rho\to 1$ and  using the continuous dependence of solutions on parameters we have $(u_\rho,v_\rho,w_\rho,g_\rho,h_\rho)\to (u,v,w,g,h)$. The details are omitted.
\end{proof}

\section{Long time behavior of $(u,v,w)$}

This section concerns with the long time behavior of $(u,v,w)$. We first study the vanishing case ($h_\infty-g_\infty<\infty$).

\begin{theorem}\label{th4.1}Let $(u,v,w,g,h)$ be the unique global solution of
	\eqref{1.1}. If $h_\infty-g_\infty<\infty$, then
	\bess
	&\dd\lim_{t\to\infty}\|w(t,\cdot)\|_{C([g(t),\,h(t)])}=\lim_{t\to\infty}\|v(t,\cdot)\|_{C([g(t),\,h(t)])}=0,&\\
	&\dd\lim_{t\to\infty}u(t,x)=\theta/a \ \ \mbox{ uniformly\, in } \ \mathbb{R}.&\eess
\end{theorem}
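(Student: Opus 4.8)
The plan is to establish the three limits in the order $w\to0$, then $v\to0$, then $u\to\theta/a$, exploiting the one-directional coupling of the system: $w$ drives $v$, and both influence $u$ only through the bounded term $buw/(1+w)$. The first preliminary step is to show that the free boundaries slow to a halt. Since $h$ is increasing and $h(t)\to h_\infty<\infty$, we have $h'\ge0$ and $\int_1^\infty h'(t)\,dt=h_\infty-h(1)<\infty$; by Theorem \ref{th2.3} the derivative $h'$ is uniformly H\"older continuous on $[1,\infty)$, so it is a bounded, integrable, uniformly continuous function and must satisfy $h'(t)\to0$ as $t\to\infty$. The same argument gives $g'(t)\to0$.

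For the decay of $w$ I would reduce the two-sided free boundary problem to two one-sided problems covered by Lemma \ref{l3.2}. Because $0\in(g(t),h(t))$ for all $t$ and $w>0$ in the interior, on the right half $\{0<x<h(t)\}$ the function $w$ satisfies $w_t-dw_{xx}=kv/(1+w)-qw\ge-qw$ (as $v,w\ge0$), together with $w(t,0)>0$, $w(t,h(t))=0$ and $h'(t)=-\beta w_x(t,h(t))$. Combined with $h(t)\to h_\infty<\infty$, $h'(t)\to0$, and the uniform bound $\|w(t,\cdot)\|_{C^1([g(t),h(t)])}\le C$ from Theorem \ref{th2.3}, Lemma \ref{l3.2} (with $x_0=0$, $m\equiv0$, $C=q$) yields $\max_{0\le x\le h(t)}w(t,x)\to0$. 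Reflecting $x\mapsto-x$ turns the left half $\{g(t)<x<0\}$ into an admissible one-sided problem with moving boundary $-g(t)\to-g_\infty<\infty$, so the same lemma gives $\max_{g(t)\le x\le0}w(t,x)\to0$. Combining the two halves shows $\|w(t,\cdot)\|_{C([g(t),h(t)])}\to0$.

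With $w\to0$ in hand, the remaining limits follow from ODE comparisons that are uniform in $x$. Write $W(t):=\|w(t,\cdot)\|_{C([g(t),h(t)])}\to0$ and use $0<u\le A_1$ from Theorem \ref{th2.2}. The forcing in $v_t=buw/(1+w)-cv$ obeys $buw/(1+w)\le bA_1W(t)$, so $v(t,x)\le e^{-c(t-t_0)}v(t_0,x)+bA_1\int_{t_0}^t e^{-c(t-s)}W(s)\,ds$, where $t_0$ is either $0$ or the time the boundary reaches $x$ (at which $v=0$) and $v(t_0,x)\le A_2$; since $W(s)\to0$, both terms tend to $0$ uniformly in $x$, giving $\|v(t,\cdot)\|_{C([g(t),h(t)])}\to0$. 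Finally, $buw/(1+w)\le bA_1W(t)\to0$ uniformly on $\mathbb{R}$ (the term vanishing where $w=0$), so the equation $u_t=\theta-au-buw/(1+w)$ is squeezed between $u_t\le\theta-au$ and $u_t\ge\theta-au-bA_1W(t)$; comparison with the scalar ODE $\bar u'=\theta-a\bar u$, whose solutions converge to $\theta/a$ at the uniform exponential rate $a$ starting from the bounded data $0<u\le A_1$, yields $u(t,x)\to\theta/a$ uniformly in $\mathbb{R}$.

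The main obstacle is the decay of $w$: Lemma \ref{l3.2} is stated only for a single moving boundary with a fixed endpoint, so the genuinely two-sided free boundary must be split at $x=0$ and the left half handled by reflection, after first verifying that $g',h'\to0$ and that the reaction term admits the required one-sided inequality $w_t-dw_{xx}\ge-qw$ with $w>0$ at the interior point $x=0$. Once this step is in place, the decay of $v$ and the convergence of $u$ are routine uniform comparisons.
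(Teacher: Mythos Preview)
Your proposal is correct and follows essentially the same route as the paper: first derive $g'(t),h'(t)\to0$ from the uniform H\"older estimate of Theorem \ref{th2.3}, then split the interval at $x=0$ and apply Lemma \ref{l3.2} (plus its reflected version) to get $\|w(t,\cdot)\|\to0$, and finally obtain the limits for $v$ and $u$ by scalar ODE comparison using $buw/(1+w)\le bA_1W(t)\to0$. The only cosmetic difference is that the paper phrases the last two steps via an $\varepsilon$-argument (e.g.\ $v_t\le\varepsilon-cv$ and $u_t\ge\theta-(a+b\varepsilon_1)u$ for $t$ large) rather than your Duhamel-type integral bounds, but the content is the same.
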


\begin{proof}  By the second estimate in \eqref{x.a}, we see that both $g'(t)$ and $h'(t)$ are uniformly continuous in $[1,\infty)$. Since $h_\infty-g_\infty<\infty$, it is easy to deduce $\dd\lim_{t\rightarrow\infty}g'(t)=\lim_{t\rightarrow\infty}h'(t)=0$. Then, using the first
	estimate of \eqref{x.a}, Lemma \ref{l3.2} in $[0,h(t))$ and a similar version of
	Lemma \ref{l3.2} in $(g(t),0]$, one can arrive at
	$\lim\limits_{t\to\infty}\|w(t,\cdot)\|_{C([g(t),\,h(t)])}=0$. For any $\ep>0$, there exists $T>0$ such that $bA_1w(t,x)/(1+w(t,x))\le \ep$ for $t\ge T$ and $x\in\mathbb{R}$. Thus $v$ satisfies
	\bess
	\left\{\begin{aligned}
		&v_{t}\le\ep-cv, &&t\ge T,~g(t)<x<h(t),\\
		&v(t,g(t))=v(t,h(t))=0, && t\ge T,\\
		&v(T,x)\ge0.
	\end{aligned}\right.
	\eess
	By the comparison principle, we have
	$\limsup\limits_{t\to\infty}\|v(t,\cdot)\|_{C([g(t),\,h(t)])}\le {\ep}/{c}$.
	The arbitrariness of $\ep$ implies
	$\lim\limits_{t\to\infty}\|v(t,\cdot)\|_{C([g(t),\,h(t)])}=0$.
	Similarly, we can easily deduce that
	\bes\dd\limsup_{t\to\infty}u(t,x)\le{\theta}/{a} \ \ {\rm uniformly ~ in~}\mathbb{R}.\label{4.1}\ees
	On the other hand, for any $\ep_1>0$, there exists $T_1>0$ such that $w(t,x)/(1+w(t,x))\le \ep_1$
	for $t\ge T_1$ and $x\in\mathbb{R}$. So $u$ satisfies
	\bess
	\left\{\begin{aligned}
		&u_{t}\ge\theta-(a+b\ep_1)u, &&t\ge T_1,~x\in\mathbb{R},\\
		&u(T_1,x)>0,&&x\in\R.
	\end{aligned}\right.
	\eess
	Let $\underline u$ be the unique solution of the problem
	\bess
	\left\{\begin{aligned}
		&\underline u_{t}=\theta-(a+b\ep_1)\underline u, &&t\ge T_1,\\
		&\underline u(T_1)=0.
	\end{aligned}\right.
	\eess
	By using comparison principle and the fact that $\dd\lim_{t\to\infty}\underline
	u(t)=\theta/(a+b\ep_1)$, we have that $\dd\liminf_{t\to\infty}u(t,\cdot)\ge\theta/(a+b\ep_1)$ uniformly in $\mathbb{R}$. Due to the arbitrariness of $\ep_1$ and \eqref{4.1}, we derive the desired result.
\end{proof}

In the following we study the spreading case ($h_\infty-g_\infty=\infty$).
To get the accurate limits of the solution components $u,v,w$ of \eqref{1.1}, we first give a proposition  which concerns the existence, uniqueness and asymptotic behavior of positive solution of a boundary value problem.

\begin{proposition}\label{pro3.1} Let $m,l$ be positive constants and consider the following problem
	\bes
	\left\{\begin{aligned}
		&f_2(m, v, w)=0, &&-l<x<l,\\
		&-dw_{xx}=f_3(v, w), &&-l<x<l,\\
		& w(x)=0,&&x=\pm l.
	\end{aligned}\right.\label{3.5}
	\ees
	
	\sk{\rm(i)}\, Let $\lambda_1$ be the principal eigenvalue of
	\bes
	\left\{\begin{aligned}
		&d\phi_{xx}-q\phi+k\psi =\lambda \phi, &&-l<x<l,\\
		&bm\phi-c\psi =\lambda \psi, &&-l<x<l,\\
		& \phi(\pm l)=0.
	\end{aligned}\right.\label{3.6}
	\ees
	Then \eqref{3.5} has a positive solution if and only if $\lambda_1>0$. Moreover, the positive solution of \eqref{3.5} is unique when it exists.
	
	\sk{\rm(ii)}\, To stress the dependence on $l$, we denote the unique positive solution of \eqref{3.5} by $(v_l,w_l)$. Then $(v_l,w_l)$ is nondecreasing in $l$, and converges to $(\hat v,\hat w)$ locally uniformly in $\mathbb{R}$ as $l\to\infty$, where $(\hat v,\hat w)$ is the unique positive root of
	\bes
	f_2(m, v, w)=0,\ \ \ f_3(v, w)=0.
	\label{3.7}
	\ees
\end{proposition}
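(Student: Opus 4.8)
The plan is to eliminate $v$ and reduce \eqref{3.5} to a single scalar logistic-type equation. Since $f_2(m,v,w)=0$ forces $v=\frac{bmw}{c(1+w)}$, substituting into the second equation turns \eqref{3.5} into
\[-dw_{xx}=wG(w),\quad -l<x<l,\qquad w(\pm l)=0,\]
where $G(w):=\frac{kbm}{c(1+w)^2}-q$. A positive solution $(v,w)$ of \eqref{3.5} corresponds bijectively to a positive solution $w$ of this scalar problem with $v$ recovered by the formula above, so it suffices to study the scalar equation. The two features of $G$ that drive everything are: $G$ is \emph{strictly decreasing} on $[0,\infty)$ with $G(0)=\frac{kbm}{c}-q$ and $G(+\infty)=-q<0$; and $wG(w)$ vanishes at $w=0$ and at the unique $w^*:=\sqrt{kbm/(cq)}-1$ (positive exactly when $kbm>cq$). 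Matching \eqref{3.6} with \eqref{3.1} (here $a_{11}=-q$, $a_{12}=k$, $a_{21}=bm$, $a_{22}=-c$), Theorem \ref{th3.1}(ii) together with $\rho_1=-q-d\pi^2/(4l^2)$ gives that $\lambda_1$ has the same sign as $G(0)-d\mu_1$, where $\mu_1=\pi^2/(4l^2)$ is the principal Dirichlet eigenvalue of $-\partial_{xx}$ on $(-l,l)$. Thus $\lambda_1>0\iff G(0)>d\mu_1$, and this equivalence is the hinge of the whole proof.

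For part (i), existence when $\lambda_1>0$ I would obtain by the sub--supersolution method: the constant $w^*$ is a supersolution (as $w^*G(w^*)=0$), while $\underline w=\ep\phi_1$ is a subsolution for $\ep$ small, where $\phi_1>0$ is the Dirichlet principal eigenfunction on $(-l,l)$; indeed $-d\underline w_{xx}=d\mu_1\underline w\le \underline wG(\underline w)$ holds once $G(\ep\|\phi_1\|_\infty)>d\mu_1$, which is possible because $G(0)>d\mu_1$. Monotone iteration between $\ep\phi_1$ and $w^*$ then yields a solution, which is positive since it dominates $\ep\phi_1$. For nonexistence when $\lambda_1\le0$, I would test against $\phi_1$: if $w>0$ solved the problem, integration by parts gives $d\mu_1\int w\phi_1=\int wG(w)\phi_1$, but $G(w)<G(0)\le d\mu_1$ wherever $w>0$ forces the right-hand side to be strictly smaller, a contradiction. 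Uniqueness is precisely the regime where the strict monotonicity of $w\mapsto G(w)=[wG(w)]/w$ applies: two positive solutions $w_1,w_2$ satisfy $\int(G(w_1)-G(w_2))(w_1^2-w_2^2)\ge0$ by the D\'iaz--Sa\'a inequality (or the Brezis--Oswald argument), while the integrand is $\le0$ pointwise, whence $w_1\equiv w_2$.

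For part (ii), I would first prove monotonicity in $l$: for $l_1<l_2$, extending $w_{l_1}$ by zero on $(-l_2,l_2)$ produces a (weak) subsolution there---the outward kinks at $\pm l_1$ only help---so monotone iteration above it converges, by uniqueness, to $w_{l_2}$, giving $w_{l_1}\le w_{l_2}$ on $(-l_1,l_1)$; then $v_l=\frac{bmw_l}{c(1+w_l)}$ inherits the monotonicity. A uniform bound $w_l\le w^*$ follows from the maximum principle (at an interior maximum $G(w_l)\ge0$, so $w_l\le w^*$). Hence $w_l\uparrow\hat w\le w^*$ pointwise, and $L^p$ plus Schauder estimates upgrade this to convergence in $C^2_{loc}(\mathbb R)$, with $\hat w$ solving $-d\hat w_{xx}=\hat wG(\hat w)$ on all of $\mathbb R$ and $\hat w$ even (each $w_l$ is even by uniqueness).

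The delicate point---and the main obstacle---is to show the entire limit $\hat w$ is the constant $w^*$ rather than some nonconstant bounded positive solution. I would pin down the centre value: for any $\ep\in(0,w^*)$ the function $(w^*-\ep)\cos\frac{\pi x}{2R}$ is a subsolution on $(-R,R)$ once $R$ is large enough that $d(\pi/2R)^2\le G(w^*-\ep)$ (note $G(w^*-\ep)>0$), whence $w_R(0)\ge w^*-\ep$. As $w_l(0)$ is nondecreasing in $l$ and $\le w^*$, this yields $\hat w(0)=w^*$. Since $\hat w$ is even, $\hat w_x(0)=0$; and the constant $w^*$ solves the same second-order initial value problem $d\hat w_{xx}=-\hat wG(\hat w)$, $\hat w(0)=w^*$, $\hat w_x(0)=0$ (whose right-hand side is $C^1$), so ODE uniqueness forces $\hat w\equiv w^*$. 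Finally $v_l=\frac{bmw_l}{c(1+w_l)}\to\frac{bmw^*}{c(1+w^*)}=\hat v$ locally uniformly, and $(\hat v,w^*)$ is exactly the positive root of \eqref{3.7}. I expect the bookkeeping in the zero-extension monotonicity and in the $C^2_{loc}$ passage to the limit to be routine; the genuinely substantive step is the Liouville-type identification $\hat w\equiv w^*$, which the cosine subsolution plus ODE uniqueness handles cleanly.
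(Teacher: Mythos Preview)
Your proposal is correct and follows the same overall route as the paper: eliminate $v$ to reduce \eqref{3.5} to the scalar problem $-dw_{xx}=wG(w)$ with $G(w)=\frac{kbm}{c(1+w)^2}-q$, use the strict monotonicity of $G$ for both the existence/nonexistence dichotomy and uniqueness, then pass to the limit in $l$ via monotonicity and identify the entire-space limit as the constant $w^*$. The paper carries out each of these steps more tersely, citing ``structure of nonlinear terms'' for uniqueness and \cite{DM01} for the monotonicity in $l$, whereas you spell out the Brezis--Oswald/D\'iaz--Sa\'a mechanism and the zero-extension subsolution explicitly.

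The one place where your argument is genuinely more informative is the Liouville step. The paper simply asserts that, because $G$ is decreasing, any positive bounded solution of $-d\hat w_{xx}=\hat wG(\hat w)$ on $\mathbb{R}$ must equal $w^*$; this is true but rests on a background fact the paper does not cite or prove. Your route---pin down $\hat w(0)=w^*$ via the cosine subsolution $(w^*-\ep)\cos\frac{\pi x}{2R}$, note $\hat w_x(0)=0$ from evenness, then invoke ODE uniqueness---is self-contained and avoids appealing to an external Liouville theorem. Both get to the same place; yours has the advantage of being elementary and closed, at the modest cost of a few extra lines.
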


\begin{proof}
	\sk{\rm(i)}\,
	Clearly, the problem \eqref{3.5} is equivalent to
	\bess
	\left\{\begin{aligned}
		&-dw_{xx}+qw=\frac{kbmw}{c(1+w)^2}, \ \ v=\frac{bmw}{c(1+w)}, \ \ &&-l<x<l,\\
		&w(x)=0,&&x=\pm l.
	\end{aligned}\right.
	\eess
	For clarity of exposition, we will always use the problem \eqref{3.5} in later discussion.
	
	If \eqref{3.5} has a positive solution $(v,w)$, it is easy to show that $q<\lambda_1(q)<kbm/c$, where $\lambda_1(q)$ is the principal eigenvalue of
	\bes
	\left\{\begin{aligned}
		&-d\phi_{xx}+q\phi=\lambda\phi, &&-l<x<l,\\
		&\phi(x)=0,&&x=\pm l.
	\end{aligned}\right.\label{3.8}
	\ees
	Moreover, since $\lambda_1(q)<{kbm}/{c}$, and the function ${bmk}/(c+x)-x$ is decreasing in $x>-c$, we can show that there exists the unique $\lambda^*>0$ such that ${bmk}/(c+\lambda^*)-\lambda^*=\lambda_1(q)$.
	Substituting this into \eqref{3.8}, one can easily see that $\lambda^*$ is the principal eigenvalue of \eqref{3.6}, that is, $\lambda^*=\lambda_1>0$.
	
	If $\lambda_1>0$, by the standard upper and lower solution methods we can show that \eqref{3.5} has at least one positive solution. Thanks to the structure of nonlinear terms of \eqref{3.5}, the uniqueness is easily derived.
	
	\sk{\rm(ii)}\, It follows from the above analysis and Corollary \ref{cor3.1} that for large $l$, \eqref{3.5} has a unique positive solution $(v_l,w_l)$ provided that $kbm>qc$. A comparison argument (Lemma 2.1 in \cite{DM01}) shows that $(v_l,w_l)$ is nondecreasing in $l$, and there exists $C>0$ such that $v_l,w_l<C$ for all large $l$. Making use of the standard elliptic regularity theory, we have that $(v_l,w_l)\to (\tilde v, \tilde w)$ in $C^2_{loc}(\mathbb{R})$, where $(\tilde v, \tilde w)$ is a positive solution of
	\bess
	\left\{\begin{aligned}
		&f_2(m, v, w)=0, &&-\infty<x<\infty,\\
		&-dw_{xx}=f_3(v, w), &&-\infty<x<\infty.
	\end{aligned}\right.
	\eess
	Obviously, $\tilde w$ satisfies
	\bes
	-d\tilde w_{xx}=\frac{kbm}{c(1+\tilde w)^2}\tilde w-q\tilde w, \ \ -\infty<x<\infty.
	\label{3.9}
	\ees
	Since $\frac{kbm}{c(1+w)^2}-q$ is decreasing in $w>0$, the possible positive solution of \eqref{3.9} is a unique positive root of $kbm=qc(1+w)^2$.
	Thus $\tilde w=\hat w$, and consequently $\tilde v=\hat v$. The proof is finished.
\end{proof}

\begin{theorem} Suppose that $h_\infty=-g_\infty=\infty$. If $\mathcal{R}_0+\sqrt{\mathcal{R}_0}>b/a$, then there are six positive constants $\underline{u}_\infty$, $\bar{u}_\infty$, $\underline{v}_\infty$, $\bar{v}_\infty$, $\underline{w}_\infty$ and $\bar{w}_\infty$ such that
	\bes\left\{\begin{array}{ll}
		\dd\underline{u}_\infty\le\liminf_{t\to\infty}u(t,x)\le\limsup_{t\to\infty}u(t,x)\le\bar{u}_\infty,\\
		\dd\underline{v}_\infty\le\liminf_{t\to\infty}v(t,x)\le\limsup_{t\to\infty}v(t,x)\le\bar{v}_\infty,\\
		\dd\underline{w}_\infty\le\liminf_{t\to\infty}w(t,x)\le\limsup_{t\to\infty}w(t,x)\le\bar{w}_\infty
	\end{array}\right.\label{x.e}\ees
	locally uniformly in $\mathbb{R}$. Particularly, if we assume $b\le 2a$, then
	\bes
	\lim_{t\to\yy}u(t,x)=u^*,\ \ \lim_{t\to\yy}v(t,x)=v^*,\ \ \lim_{t\to\yy}w(t,x)=w^* \ \ {\rm locally~uniformly~in~}\mathbb{R},\label{x.f}\ees
	where $(u^*,v^*,w^*)$ is a unique positive root of
	\bes
	f_1(u,w)=0,\ \  f_2(u,v,w)=0,\ \ f_3(v, w)=0.
	\label{3.10}
	\ees
\end{theorem}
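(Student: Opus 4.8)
The plan is a monotone iteration (squeezing) built on the comparison principle. Two structural facts drive it: the $(v,w)$-subsystem is cooperative, since $\partial_w f_2=bu/(1+w)^2>0$ and $\partial_v f_3=k/(1+w)>0$; and $f_1(u,w)$ is decreasing in $w$ while $bw/(1+w)$ is increasing in $w$. For a parameter $m>0$ I write $\hat w(m)=\sqrt{kbm/(qc)}-1$ and $\hat v(m)=q\hat w(m)(1+\hat w(m))/k$ for the homogeneous root of \eqref{3.7}, i.e. the limits furnished by Proposition \ref{pro3.1}(ii); $\hat w(m)$ is increasing in $m$ and positive iff $m>qc/(kb)$. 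Set $G(m)=\theta/\big(a+b\hat w(m)/(1+\hat w(m))\big)$, which is decreasing, and note that its unique fixed point is $u^*$, the first component of the unique positive root of \eqref{3.10}; here $w^*$ is the unique positive root of $(a+b)w^2+(2a+b)w+a(1-\mathcal R_0)=0$, which exists because $\mathcal R_0>1$ in the spreading regime (when $\mathcal R_0\le1$ only vanishing occurs).

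From $u_t\le\theta-au$ and comparison, $\limsup_{t\to\infty}u\le\theta/a=:\bar u_0$ uniformly in $\mathbb R$. Given $u\le\bar u+\ep$ for large $t$, the monotonicity of $f_2$ in $u$ and cooperativity let me compare $(v,w)$, on any fixed $(-l,l)\subset(g(t),h(t))$ (available for large $t$ since $h_\infty=-g_\infty=\infty$), with the steady state $(v_l,w_l)$ of \eqref{3.5} at $m=\bar u+\ep$, using the global bound $w\le A_3$ as admissible boundary data; letting $l\to\infty$ and $\ep\to0$ through Proposition \ref{pro3.1}(ii) gives $\limsup v\le\hat v(\bar u)$ and $\limsup w\le\hat w(\bar u)$ locally uniformly. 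Feeding $w\le\bar w+\ep$ into the $u$-equation and using that $bw/(1+w)$ is increasing yields $\liminf u\ge\theta/(a+b\bar w/(1+\bar w))=:\underline u$; a symmetric lower-solution argument—valid precisely when the principal eigenvalue of \eqref{3.6} is positive, i.e. when the frozen parameter exceeds $qc/(kb)$ (Corollary \ref{cor3.1})—produces $\liminf v\ge\hat v(\underline u)$, $\liminf w\ge\hat w(\underline u)$ from $\liminf u\ge\underline u$. Starting at $\bar u_0=\theta/a$ one finds $\hat w(\bar u_0)=\sqrt{\mathcal R_0}-1$ and $\underline u_1=G(\bar u_0)$, and a direct computation shows $\underline u_1>qc/(kb)\iff\mathcal R_0+\sqrt{\mathcal R_0}>b/a$. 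Under this hypothesis $\hat w(\underline u_1),\hat v(\underline u_1)>0$, so \eqref{x.e} holds with these positive constants, proving the first assertion.

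For the sharper conclusion I iterate the four maps to produce sequences $\bar u_n\downarrow$, $\underline u_n\uparrow$ (and likewise for $v,w$): since $G$ is decreasing, $G\circ G$ is increasing and $\bar u_1=(G\circ G)(\bar u_0)<\bar u_0$ forces monotonicity. The bounded sequences converge to $\underline u_\infty\le\bar u_\infty$, $\underline w_\infty\le\bar w_\infty$, etc., which sandwich $\liminf$ and $\limsup$ in \eqref{x.e}. Passing to the limit in the defining relations and writing $P:=1+\bar w_\infty\ge Q:=1+\underline w_\infty>1$, the limits satisfy $P^2\big[(a+b)Q-b\big]=a\mathcal R_0\,Q$ and $Q^2\big[(a+b)P-b\big]=a\mathcal R_0\,P$. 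Eliminating $a\mathcal R_0$, clearing denominators and dividing by $P-Q$ (under the assumption $P\ne Q$) reduces these to $(a+b)PQ(P+Q)=b(P^2+PQ+Q^2)$. If $b\le2a$ then $a+b\ge\tfrac32b$, while for $P,Q>1$ one checks $\tfrac32PQ(P+Q)>P^2+PQ+Q^2$; the two facts contradict the identity. Hence $P=Q$, i.e. $\bar w_\infty=\underline w_\infty$, and then $\bar u_\infty=\underline u_\infty=u^*$, $\bar v_\infty=\underline v_\infty=v^*$, giving \eqref{x.f}.

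The main obstacle is the analytic comparison machinery on the moving domain, in particular the lower bounds: constructing a positive lower solution supported inside $(g(t),h(t))$ needs the principal eigenvalue of \eqref{3.6} to be positive on large intervals—guaranteed exactly by the threshold $m>qc/(kb)$—followed by a careful passage $l\to\infty$ through Proposition \ref{pro3.1}(ii). The upper bounds are comparatively routine because $w\le A_3$ supplies admissible Dirichlet data. By contrast, the collapse of the sandwich under $b\le2a$ is a short algebraic computation once the limiting system in $P,Q$ is in hand.
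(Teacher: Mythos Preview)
Your outline is correct and follows the same monotone-squeezing strategy as the paper: bound $u$ from above by ODE comparison, push this into the cooperative $(v,w)$-subsystem to get upper bounds, feed the upper bound on $w$ back into the $u$-equation for a lower bound, then use an eigenfunction-based lower solution (legitimate exactly when the frozen coefficient exceeds $qc/(kb)$) to bound $(v,w)$ from below, and iterate. The verification that the threshold $\underline u_1>qc/(kb)$ is equivalent to $\mathcal R_0+\sqrt{\mathcal R_0}>b/a$ and the monotonicity via $G\circ G$ are exactly what the paper uses.

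Two remarks. First, a small imprecision: for the \emph{upper} comparison on $(-l,l)$ you cannot compare against the steady state of \eqref{3.5}, since that has zero Dirichlet data while $w(t,\pm l)>0$. What is needed (and what the paper does in its Step~3) is the auxiliary problem \eqref{3.12} with large constant boundary data $K\ge A_3$; one then shows separately that this $K$-boundary family is monotone in $l$ and also converges to $(\hat v,\hat w)$ as $l\to\infty$. Proposition~\ref{pro3.1}(ii) alone covers only the zero-boundary case used for the lower bounds. For the very first upper bound the paper in fact bypasses this and uses a direct ODE comparison with \eqref{3.11}, which gives a spatially uniform bound; your interval approach also works but is slightly heavier there.

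Second, your algebraic endgame for $b\le 2a$ is cleaner than the paper's, which only asserts the conclusion ``after a series of calculations''. Your reduction to $(a+b)PQ(P+Q)=b(P^2+PQ+Q^2)$ with $P=1+\bar w_\infty$, $Q=1+\underline w_\infty>1$, together with the elementary inequality $\tfrac32 PQ(P+Q)>P^2+PQ+Q^2$ for $P,Q>1$, makes the contradiction transparent.
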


\begin{proof} The condition $h_\infty-g_\infty=\infty$ implies $\mathcal{R}_0>1$ (cf. Theorem \ref{th5.1}). One can easily see that \eqref{3.10} has a unique positive root $(u^*,v^*,w^*)$. The following proof is actually an iterative process, and the idea comes from
	\cite{WZjdde17,WZhang16}.
	
\sk{\it Step 1:}  Clearly, \[\limsup_{t\to\infty}u(t,x)\le{\theta}/{a}=:\bar{u}_1 \ \mbox{ uniformly\, in } \ \mathbb{R}.\]
	Then for any $\ep>0$, there exists $T>0$ such that $u(t,x)\le {\theta}/{a}+\ep$ with $t\ge T$ and $x\in\mathbb{R}$. Thus $(v,w)$ satisfies
	\bess
	\left\{\begin{aligned}
		&v_t\le f_2(\theta/a+\ep,\,v, w), &&t>T,\ \ g(t)<x<h(t),\\
		&w_t-dw_{xx}=f_3(v, w), &&t>T,\ \ g(t)<x<h(t),\\
		&v(t,x)=0, \ w(t,x)=0,&&t>T,\ x=g(t) \ {\rm or} \ h(t),\\
		&v(T,x)\ge 0,\ w(T,x)\ge 0,&&g(T)\le x\le h(T).
	\end{aligned}\right.
	\eess
	Consider the ODEs problem
	\bes
	\left\{\begin{aligned}
		&\bar{v}_t=f_2(\bar{u}_1+\ep,\,\bar v, \bar w), \ \
		\bar{w}_t=f_3(\bar v, \bar w), &&t>T,\\
		&\bar v(T)=A_2, \ \bar w(T)=A_3.
	\end{aligned}\right.\label{3.11}
	\ees
	Since ${\cal R}_0>1$, the problem \eqref{3.11} has a unique positive equilibrium $(\bar{v}_1^{\ep},\bar{w}_1^{\ep})$ which is globally asymptotically stable. By a simple comparison consideration, we have $v(t,x)\le \bar{v}(t,x)$ and $w(t,x)\le \bar{w}(t,x)$ for $t\ge T$ and $x\in\mathbb{R}$. And so, $\dd\limsup_{t\to\infty}v(t,x)\le\bar{v}_1^{\ep}$ and $\dd\limsup_{t\to\infty}w(t,x)\le\bar{w}_1^{\ep}$ uniformly in $\mathbb{R}$. By the arbitrariness of $\ep$, we have
	\[\limsup_{t\to\infty}v(t,x)\le\bar{v}_1, \ \ \limsup_{t\to\infty}w(t,x)\le\bar{w}_1
	\ \mbox{ uniformly\, in } \ \mathbb{R},\]
	where $(\bar{v}_1,\bar{w}_1)$ is a unique positive root of the algebraic system  \eqref{3.7} with $m$ replaced by $\bar{u}_1$.
	
\sk{\it Step 2:}  For small $\ep>0$, there exists $T>0$ such that $w(t,x)\le \bar{w}_1+\ep$ for $t\ge T$ and $x\in\mathbb{R}$. Hence $u$ satisfies
	\bess
	\left\{\begin{aligned}
		&u_t\ge f_1(u, \bar{w}_1+\ep), &&t>T, \ x\in\mathbb{R},\\
		&u(T,x)>0, &&x\in\mathbb{R}.
	\end{aligned}\right.
	\eess
	Using the comparison argument with the solution having initial value $0$ we can deduce that
	\[\liminf_{t\to\infty}u(t,x)\ge \frac{\theta(1+\bar{w}_1)}{a+a\bar{w}_1+b\bar{w}_1}=:\underline{u}_1 \ \ {\rm uniformly~in} \ \ \mathbb{R}.\]
	Direct calculation shows that $\bar{w}_1=\sqrt{\mathcal{R}_0}-1$ and
	\[kb\underline{u}_1>qc \ {\rm ~if ~and ~only~ if~}\,
	\mathcal{R}_0+\sqrt{\mathcal{R}_0}>b/a.\]
	By our assumptions, we have $kb\underline{u}_1>qc$, and then  $kb(\underline{u}_1-\ep)>qc$ for small $\ep>0$.
	
	Recall Proposition \ref{pro3.1}. For any large $l$, let $(v_l,w_l)$ and $(\underline{v}_1^{\ep},\underline{w}_1^\ep)$ be the unique positive solutions of \eqref{3.5} and \eqref{3.7} with $m$ replaced by $\underline{u}_1-\ep$, respectively. Then $(v_l(x),w_l(x))\to (\underline{v}_1^{\ep}(x),\underline{w}_1^\ep(x))$ locally uniformly in $\mathbb{R}$ as $l\to\infty$. For any given $N\gg 1$ and $0<\sigma\ll 1$, there exists a large $l>N$ such that $v_l(x)>\underline{v}_1^{\ep}-\sigma/2$ and $w_l(x)>\underline{w}_1^\ep-\sigma/2$ for $x\in[-N,N]$.
	
	For such a fixed $l>N$, let $(\lambda_1, (\phi,\psi))$ be the principal eigenpair of \eqref{3.6} with $m$ replaced by $\underline{u}_1-\ep$. We can verify that for small $\delta>0$, $(\delta\psi,\delta\phi)$ is a lower solution of \eqref{3.5} with $m$ replaced by $\underline{u}_1-\ep$ (see the proof of Theorem \ref{th4.2} for details). Moreover, we may choose $T\gg 1$, $0<\delta\ll 1$ such that $[-l,l]\subseteq (g(t),h(t))$ for $t\ge T$, $u(t,x)\ge\underline{u}_1-\ep$ on $[T,\infty)\times[-l,l]$, and $\delta\psi(\cdot)\le v(T,\cdot), \delta\phi(\cdot)\le w(T,\cdot)$ on $[-l,l]$. Hence $(v,w)$ satisfies
	\bess
	\left\{\begin{aligned}
		&v_t\ge f_2(\underline{u}_1-\ep,\,v,w), &&t>T,\ \ -l<x<l,\\
		&w_t-dw_{xx}=f_3(v, w), &&t>T,\ \ -l<x<l,\\
		& w(t,x)>0,&&t>T,\ x=\pm l,\\
		&v(T,x)\ge\delta\psi(x),\ w(T,x)\ge\delta\phi(x),&&-l\le x\le l.
	\end{aligned}\right.
	\eess
	
	Let $(\tilde{v},\tilde{w})$ be a unique positive solution of the following problem
	\bess
	\left\{\begin{aligned}
		&\tilde {v}_t=f_2(\underline{u}_1-\ep,\,\tilde v, \tilde w), &&t>T,\ \ -l<x<l,\\
		&\tilde{w}_t-d\tilde{w}_{xx}=f_3(\tilde{v}, \tilde{w}), &&t>T,\ \ -l<x<l,\\
		& \tilde{w}(t,x)=0,&&t>T,\ x=\pm l,\\
		&\tilde{v}(T,x)=\delta\psi(x),\ \tilde{w}(T,x)=\delta\phi(x),&&-l\le x\le l.
	\end{aligned}\right.
	\eess
	Then $\tilde{v}$ and $\tilde{w}$ are nondecreasing in $t$. By the standard parabolic regularity we have $\dd\lim_{t\to\infty}(\tilde{v}(t,x),\tilde{w}(t,x))=(v_l(x),w_l(x))$ uniformly in $[-l,l]$. There exists $T_1>T$ such that $\tilde{v}(t,x)\ge v_l(x)-\sigma/2, \ \tilde{w}(t,x)\ge w_l(x)-\sigma/2$ for $t>T_1$ and $x\in[-l,l]$. Furthermore, by the comparison principle, $v(t,x)\ge \tilde{v}(t,x),  w(t,x)\ge\tilde{w}(t,x)$ for $t>T$ and $x\in[-l,l]$. So we have
	\[v(t,x)\ge\underline{v}_1^{\ep}-\sigma, \ \ w(t,x)\ge\underline{w}_1^{\ep}-\sigma \ \ \mbox{for} \ t>T_1,\ |x|\le N.\]
	These estimates combined with the arbitrariness of $\ep$, $\sigma$ and $N$ yield
	\[\liminf_{t\to\infty}v(t,x)\ge \underline{v}_1,\ \ \liminf_{t\to\infty}w(t,x)\ge \underline{w}_1 \ \ {\rm locally~uniformly~in} \ \ \mathbb{R},\]
	where $(\underline{v}_1,\underline{w}_1)$ is a unique positive root of \eqref{3.7} with $m$ replaced by $\underline{u}_1$.
	
\sk{\it Step 3:}  For any given $N>0$ and $0<\ep\ll 1$, there exists $T>0$ such that $w(t,x)\ge \underline{w}_1-\ep$ for $t>T$ and $-N\le x\le N$. So we have
	\bess
	\left\{\begin{aligned}
		&u_t\le f_1(u,\,\underline{w}_1-\ep), &&t>T, \ -N\le x\le N,\\
		&u(T,x)>0, &&-N\le x\le N.
	\end{aligned}\right.
	\eess
	Comparing with the following ODE problem
	\bess
	\bar{u}_t=f_1(\bar u,\,\underline{w}_1-\ep), \ \ t>T;\ \ \ \bar{u}(T)=A_1,
	\eess
	we can show that $u(t,x)\le\bar{u}(t)$ for $t\ge T$ and $-N\le x\le N$. Similarly to the preceding arguments, we have
	\[\limsup_{t\to\infty}u(t,x)\le \frac{\theta(1+\underline{w}_1)}{a+a\underline{w}_1+b\underline{w}_1}=:\bar{u}_2 \ \ {\rm locally~uniformly~in} \ \ \mathbb{R},\]
	and $\bar{u}_2>\underline{u}_1$. Moreover, the direct calculation yields
	$kb\bar{u}_2>qc$.
	
	\sk For any fixed $0<\ep\ll 1$, we take $K>\max\big\{A_3,\ \frac{kb(\bar{u}_2+\ep)}{qc}\big\}$ and consider the problem
	\bes
	\left\{\begin{aligned}
		&-dw_{xx}=\frac{kb(\bar{u}_2+\ep)w}{c(1+w)^2}-qw, &&-l<x<l,\\
		&w(\pm l)=K.
	\end{aligned}\right.\label{3.12}
	\ees
	Clearly, $kb(\bar{u}_2+\ep)>qc$. By the standard method we can show that \eqref{3.12} has a unique positive solution $w^l$ for large $l$. Moreover, $0<w^l\le K$. The comparison principle gives that $w^l$ is nonincreasing in $l$ and $w^l\ge w_l$. In the same way as the proof of Proposition \ref{pro3.1}\,(ii) we can derive
	$\dd\lim_{l\to\infty}w^l(x)=\bar{w}_2^\ep$ locally uniformly in $\mathbb{R}$,
	where $\bar{w}_2^\ep$ is a unique positive root of $kb(\bar{u}_2+\ep)=qc(1+w)^2$. Take
	\[v^l(x)=\frac{b(\bar{u}_2+\ep)w^l(x)}{c(1+w^l(x))}, \ \ \  \bar{v}_2^\ep=\frac{b(\bar{u}_2+\ep)\bar{w}_2^\ep}{c(1+\bar{w}_2^\ep)}.\]
	Then $\dd\lim_{l\to\infty}v^l(x)=\bar{v}_2^\ep$ locally uniformly in $\mathbb{R}$, and
	$(\bar{v}_2^\ep,\bar{w}_2^\ep)$ is a unique positive root of \eqref{3.7} in there $m$ is replaced by $\bar{u}_2+\ep$.
	
	For any given $N\gg 1$ and $0<\sigma\ll 1$, there exists a large $l>N$ such that $v^l(x)\le \bar{v}_2^\ep+\sigma$ and $w^l(x)\le \bar{w}_2^\ep+\sigma$ for $-N\le x\le N$. Moreover, there exists $T>0$ such that $u(t,x)\le \bar{u}_2+\ep$ for $(t,x)\in[T,\infty)\times[-l,l]$, and $h(T)>l, \ g(T)<-l$. Thanks to the equation of $v$ and $K>A_3$, we can find $T_1>T$ such that $v(t,x)\le \frac{b(\bar{u}_2+\ep)K}{c(1+K)}:=A_2^*$ on $[T_1,\infty)\times[-l,l]$. Therefore, $(v,w)$ satisfies
	\bess
	\left\{\begin{aligned}
		&v_t\le f_2(\bar{u}_2+\ep,\,v, w), &&t>T_1,\ \ -l<x<l,\\
		&w_t-dw_{xx}=f_3(v,w), &&t>T_1,\ \ -l<x<l,\\
		& w(t,x)\le K,&&t>T_1,\ x=\pm l,\\
		&v(T_1,x)\le A_2^*,\ w(T_1,x)\le K,&&-l\le x\le l.
	\end{aligned}\right.
	\eess
	Let $(\bar{v},\bar{w})$ be a unique positive solution of the problem
	\bess
	\left\{\begin{aligned}
		&\bar{v}_t=f_2(\bar{u}_2+\ep,\,\bar v, \bar w), &&t>T_1,\ \ -l<x<l,\\
		&\bar{w}_t-d\bar{w}_{xx}=f_3(\bar v, \bar w), &&t>T_1,\ \ -l<x<l,\\
		&\bar{w}(t,x)=K,&&t>T_1,\ x=\pm l,\\
		&\bar{v}(T_1,x)=A_2^*,\ \bar{w}(T_1,x)=K,&&-l\le x\le l.
	\end{aligned}\right.
	\eess
	Then we can deduce that $(\bar{v}(t,x),\bar{w}(t,x))\to(v^l(x),w^l(x))$ uniformly in $[-l,l]$ as $t\to\infty$. Thus there exists $T_2>T_1$ such that $\bar{v}(t,x)\le v^l(x)+\sigma, \bar{w}(t,x)\le w^l(x)+\sigma$ for $t>T_2$ and $x\in[-l,l]$. A comparison consideration yields that $v(t,x)\le \bar{v}(t,x)$ and $w(t,x)\le \bar{w}(t,x)$ on $[T_1,\infty]\times[-l,l]$. Recalling our previous conclusion we immediately derive that
	\[v(t,x)\le\bar{v}_2^{\ep}+2\sigma, \ \ w(t,x)\le\bar{w}_2^{\ep}+2\sigma, \ \ t>T_2, -N\le x\le N.\]
	The arbitrariness of $\ep$, $\sigma$ and $N$ implies
	\[\limsup_{t\to\infty}v(t,x)\le\bar{v}_2, \ \ \limsup_{t\to\infty}w(t,x)\le\bar{w}_2 \ \ \mbox{locally\, uniformly\, in }\ \mathbb{R},\]
	where $(\bar{v}_2,\bar{w}_2)$ is a unique positive root of the equations \eqref{3.7} with $m$ replaced by $\bar{u}_2$.
	
	We may argue as in Step 2 to conclude that
	\bess
	\liminf_{t\to\infty}u(t,x)\ge \underline{u}_2, \ \
	\liminf_{t\to\infty}v(t,x)\ge \underline{v}_2 , \ \liminf_{t\to\infty}w(t,x)\ge \underline{w}_2 \ \ {\rm locally~uniformly~in} \ \ \mathbb{R},\eess
	where $\underline{u}_2=\frac{\theta(1+\bar{w}_2)}{a+a\bar{w}_2+b\bar{w}_2}$, and $(\underline{v}_2,\underline{w}_2)$ is a unique positive root of \eqref{3.7} with $m$ replaced by $\underline{u}_2$.
	
\sk{\it Step 4:}  According to the above arguments we have
	\[\underline{u}_1<\underline{u}_2<\bar{u}_2<\bar{u}_1, \ \ \underline{v}_1<\underline{v}_2<\bar{v}_2<\bar{v}_1, \ \ \underline{w}_1<\underline{w}_2<\bar{w}_2<\bar{w}_1.\]
	Repeating the above procedures we can find six sequences $\{\underline{u}_n\}$, $\{\bar{u}_n\}$, $\{\underline{v}_n\}$, $\{\bar{v}_n\}$, $\{\underline{w}_n\}$ and $\{\bar{w}_n\}$ satisfying
	\bess
	&\underline{u}_1<\underline{u}_2<\cdots<\underline{u}_n<\cdots<\bar{u}_n<\cdots<\bar{u}_2<\bar{u}_1,\\
	&\underline{v}_1<\underline{v}_2<\cdots<\underline{v}_n<\cdots<\bar{v}_n<\cdots<\bar{v}_2<\bar{v}_1,\\
	&\underline{w}_1<\underline{w}_2<\cdots<\underline{w}_n<\cdots<\bar{w}_n<\cdots<\bar{w}_2<\bar{w}_1,
	\eess
	so that
	\bess
	&\dd\underline{u}_n\le\liminf_{t\to\infty}u(t,x)\le\limsup_{t\to\infty}u(t,x)\le\bar{u}_n,\\
	&\dd\underline{v}_n\le\liminf_{t\to\infty}v(t,x)\le\limsup_{t\to\infty}v(t,x)\le\bar{v}_n,\\
	&\dd\underline{w}_n\le\liminf_{t\to\infty}w(t,x)\le\limsup_{t\to\infty}w(t,x)\le\bar{w}_n
	\eess
	locally uniformly in $\mathbb{R}$.
	The limits of the above six sequences are well defined, and denoted by $\underline{u}_\infty$, $\bar{u}_\infty$, $\underline{v}_\infty$, $\bar{v}_\infty$, $\underline{w}_\infty$ and $\bar{w}_\infty$ respectively. It is clear that \eqref{x.e} holds.
	
	Now we assume $b\le 2a$ and prove \eqref{x.f}. By the careful calculations one can obtain
	\bess
	&\bar{u}_1=\dd\frac{\theta}{a},\ \ \frac{b\bar{u}_n\bar{w}_n}{1+\bar{w}_n}=c\bar{v}_n,\ \
	\frac{k\bar{v}_n}{1+\bar{w}_n}=q\bar{w}_n,
	\ \ \underline{u}_n=\frac{\theta(1+\bar{w}_n)}{a+a\bar{w}_n+b\bar{w}_n},&\\[.2mm]
	&\dd\frac{b\underline{u}_n\underline{w}_n}{1+\underline{w}_n}=c\underline{v}_n,\ \
	\frac{k\underline{v}_n}{1+\underline{w}_n}=q\underline{w}_n,
	\ \ \bar{u}_{n+1}=\frac{\theta(1+\underline{w}_n)}{a+a\underline{w}_n+b\underline{w}_n}.&
	\eess
	Consequently, $\underline{u}_\infty$, $\bar{u}_\infty$, $\underline{v}_\infty$, $\bar{v}_\infty$, $\underline{w}_\infty$, $\bar{w}_\infty$ satisfy
	\bess
	&\dd\frac{b\bar{u}_\infty\bar{w}_\yy}{1+\bar{w}_\yy}=c\bar{v}_\yy,\ \
	\frac{k\bar{v}_\yy}{1+\bar{w}_\yy}=q\bar{w}_\yy,
	\ \ \underline{u}_\yy=\frac{\theta(1+\bar{w}_\yy)}{a+a\bar{w}_\yy+b\bar{w}_\yy},&\\[.2mm]
	&\dd\frac{b\underline{u}_\yy\underline{w}_\yy}{1+\underline{w}_\yy}=c\underline{v}_\yy,\ \
	\frac{k\underline{v}_\yy}{1+\underline{w}_\yy}=q\underline{w}_\yy,
	\ \ \bar{u}_\yy=\frac{\theta(1+\underline{w}_\yy)}{a+a\underline{w}_\yy+b\underline{w}_\yy}.&
	\eess
	Using our assumptions $\mathcal{R}_0>1$ and $b/a\le 2$, we can derive after a series of calculations that
	\[\underline{u}_\infty=\bar{u}_\infty=u^*, \ \ \underline{v}_\infty=\bar{v}_\infty=v^*, \ \  \underline{w}_\infty=\bar{w}_\infty=w^*.\]
	Thus \eqref{x.f} holds and the proof is ended.
\end{proof}

\section{Criteria for spreading and vanishing}

In this section we study the criteria governing spreading ($h_\infty-g_\infty=\infty$) and vanishing ($h_\infty-g_\infty<\infty$).  In the following, we divide our discussion into two cases based on the {\it Basic Reproduction Number} ${\cal R}_0=\theta kb/(acq)$.
For convenience, we denote $\gamma=\max\left\{\mu,\beta\right\}$.

\subsection{The case ${\cal R}_0\le 1$}

\begin{theorem}\label{th5.1}Let $(u,v,w,g,h)$ be the unique solution of
	\eqref{1.1}. If ${\cal R}_0\le 1$, then
	$h_\infty-g_\infty<\infty$.
\end{theorem}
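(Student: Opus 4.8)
The plan is to trap both free boundaries inside a fixed bounded interval by exhibiting a supersolution of \eqref{1.1} whose support stays bounded, and then to apply the comparison principle (Lemma \ref{l3.3}). The mechanism that makes this possible for \emph{all} initial data and \emph{all} initial habitats is Corollary \ref{cor3.1}(i): for the linearization \eqref{3.6} at $m=\theta/a$ one has $a_{11}=-q,\,a_{12}=k,\,a_{21}=b\theta/a,\,a_{22}=-c$, so $\Gamma=a_{11}-a_{12}a_{21}/a_{22}=-q+kb\theta/(ac)=q(\mathcal{R}_0-1)\le0$, whence the principal eigenvalue $\lambda_1$ of \eqref{3.6} is \emph{strictly} negative on every interval $(-l,l)$, the strictness coming from the diffusion contribution $d\pi^2/(2l)^2>0$ in $\rho_1$. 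I would first record the bound $\dd\limsup_{t\to\yy}u(t,x)\le\theta/a$ uniformly in $\R$, obtained by comparing $u_t\le\theta-au$ with the solution of $\bar u'=\theta-a\bar u,\ \bar u(0)=A_1$, namely $\bar u(t)=\theta/a+\kappa e^{-at}$ with $\kappa=A_1-\theta/a\ge0$. Since $\frac{buw}{1+w}\le buw$ and $\frac{kv}{1+w}\le kv$ for $v,w\ge0$, any nonnegative $(\bar u,\bar v,\bar w)$ satisfying the \emph{linear} cooperative inequalities $\bar v_t\ge b\bar u\bar w-c\bar v$ and $\bar w_t-d\bar w_{xx}\ge k\bar v-q\bar w$ is automatically a supersolution for $(v,w)$ in the sense of Lemma \ref{l3.3}; it therefore suffices to build such a triple on a bounded moving interval.

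For the construction, fix any $s_\yy>h_0$ and let $(\lambda_1,(\phi,\psi))$ be the positive principal eigenpair of \eqref{3.6} with $m=\theta/a$ on $(-s_\yy,s_\yy)$, so $\lambda_1<0$. Because the second equation of \eqref{3.6} is purely algebraic, $\psi=\frac{b\theta/a}{c+\lambda_1}\phi$ (here $c+\lambda_1>0$ since $\lambda_1>a_{22}=-c$), so the ratio $R:=\phi/\psi=(c+\lambda_1)/(b\theta/a)$ is a positive constant. I would look for
\[\bar w(t,x)=\rho(t)\phi(x),\qquad \bar v(t,x)=\rho(t)\psi(x)\]
on a slowly expanding interval $[-s(t),s(t)]$ with $h_0<s(0)<s(t)\nearrow s_\yy$. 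Using the eigen-relations $d\phi_{xx}=(\lambda_1+q)\phi-k\psi$ and $(b\theta/a)\phi=(c+\lambda_1)\psi$, the $\bar w$-inequality reduces to $\rho'\ge\lambda_1\rho$ and the $\bar v$-inequality to $\rho'/\rho\ge\lambda_1+b\kappa R\,e^{-at}$. Taking $\rho(t)=M\exp\!\big(\lambda_1 t+\tfrac{b\kappa R}{a}(1-e^{-at})\big)$ turns the second into an equality and implies the first; since $\lambda_1<0$ the factor $e^{\lambda_1 t}$ drives $\rho(t)\to0$, while the remaining factor stays in $[1,e^{b\kappa R/a}]$, so $\rho$ is bounded. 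Choosing $M$ large makes $\bar v(0,\cdot)\ge v_0$ and $\bar w(0,\cdot)\ge w_0$ on $[-h_0,h_0]$ (where $\phi,\psi$ are bounded below by a positive constant), and $\bar u(0)=A_1\ge u_0$ on $\R$.

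The main obstacle, and precisely where the threshold $\mathcal{R}_0=1$ bites, is that $\bar u(t)>\theta/a$ for finite $t$, so the production coefficient in the $\bar v$-equation exceeds the one encoded in $(\phi,\psi)$. One cannot cure this by replacing $u$ with $\theta/a+\ep$, since that would render $\Gamma=q(\mathcal{R}_0-1)+kb\ep/c>0$ and destroy subcriticality on large intervals. The resolution is that the excess $b(\bar u-\theta/a)=b\kappa e^{-at}$ decays exponentially with finite time-integral $b\kappa/a$, so it is absorbed \emph{exactly} into the amplitude ODE above (it is the origin of the term $b\kappa R\,e^{-at}$ and of the bounded factor $e^{\frac{b\kappa R}{a}(1-e^{-at})}$) without ever perturbing $\lambda_1$. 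A second technical point is the free-boundary flux condition: a genuinely fixed interval cannot work, because $-\beta\bar w_x(t,s_\yy)=\beta\rho|\phi'(s_\yy)|>0$ would force $\bar h'>0$. I would handle this in the standard manner of \cite{DL10}, letting $s(t)=s_\yy-\zeta(t)$ with $\zeta(t)\downarrow0$ at a rate comparable to $\rho$, so that $\bar h'=-\zeta'\ge\beta\rho\,|\phi'|=-\beta\bar w_x(t,\bar h)$ (and symmetrically at $\bar g=-s$); the $O(\zeta)$ corrections produced by recentring the eigenfunctions on the moving interval are lower order and are absorbed by enlarging $M$ and slightly lowering the decay rate.

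With this supersolution, Lemma \ref{l3.3} gives $g(t)\ge\bar g(t)\ge-s_\yy$ and $h(t)\le\bar h(t)\le s_\yy$ for all $t$, hence $h_\yy-g_\yy\le 2s_\yy<\yy$, which is the claim; Theorem \ref{th4.1} then upgrades this to the full vanishing of $v,w$ and $u\to\theta/a$. I note finally that in the strictly subcritical range $\mathcal{R}_0<1$ one can bypass the construction with an energy identity: $E(t):=\igh\!\big(w+\tfrac kc v\big)\dx$ satisfies $E'(t)=\tfrac d\mu g'(t)-\tfrac d\beta h'(t)+\igh I\,\dx$ with $I=\tfrac{w}{1+w}\big[\tfrac{kbu}{c}-q(1+w)-kv\big]$, and for $u$ near $\theta/a$ one has $I\le0$, so integrating in $t$ bounds $\int_0^\yy(h'-g')\,\dt$ and therefore $h_\yy-g_\yy$. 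This shortcut fails exactly at $\mathcal{R}_0=1$, where $I$ is no longer sign-definite for small $w$, which is why the eigenvalue-based supersolution is needed to cover the whole range $\mathcal{R}_0\le1$.
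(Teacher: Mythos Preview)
Your ``energy shortcut'' at the end is in fact the paper's entire proof, and it works for $\mathcal{R}_0=1$ as well. You observed that the obstruction is the excess $u-\theta/a$, which is only bounded by $\|u_0\|_\infty e^{-at}$; but this is exactly the ``finite time-integral'' mechanism you invoke in your supersolution argument. Concretely, with $f(t)=\igh(cw+kv)\dx$ and $\ell(t)=h(t)-g(t)$, using $u\le\hat u(t):=\theta/a+\|u_0\|_\infty e^{-at}$ and the crude bounds $\tfrac{w}{1+w}\le w$, $\tfrac{1}{1+w}\le1$ one gets
\[
f'(t)\le -cd\gamma^{-1}\ell'(t)+\big(kb\theta/a-cq\big)\!\igh w\,\dx+kb\|u_0\|_\infty e^{-at}\!\igh w\,\dx.
\]
When $\mathcal{R}_0\le1$ the middle term is $\le0$, so $cd\,\ell'(t)\le-\gamma f'(t)+\gamma\varphi(t)\ell(t)$ with $\varphi(t)=kbA_3\|u_0\|_\infty e^{-at}$. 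Integrating and applying Gronwall yields $\ell(t)\le\big[\ell(0)+\tfrac{\gamma}{cd}f(0)\big]\exp\!\big(\tfrac{\gamma}{cd}\int_0^t\varphi\big)$, which is bounded since $\int_0^\infty\varphi<\infty$. No sign-definiteness of $I$ is needed; the point is that the bad contribution is integrable in $t$, not that it is nonpositive.

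Your primary route (an eigenfunction supersolution on a slowly expanding interval) is a genuinely different strategy, but it is more delicate than you indicate, precisely at $\mathcal{R}_0=1$. There $\Gamma=0$, and by Theorem~\ref{th3.1}(ii) the principal eigenvalue on $(-l,l)$ satisfies $|\lambda_1(l)|\sim C/l^2$ as $l\to\infty$. Since the total expansion of your interval is governed by $\int_0^\infty\rho\sim M/|\lambda_1(s_\infty)|\sim M s_\infty^2$, the self-consistency requirement $s_\infty^2-s(0)^2\gtrsim\gamma\tilde\phi\int_0^\infty\rho$ forces a constraint of the form $\gamma M\lesssim\text{const}$, which is exactly the smallness condition appearing in the paper's later vanishing lemma (for $h_0<\Lambda/2$) rather than a result valid for all data. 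Your remark that the recentring corrections are ``lower order and absorbed by enlarging $M$'' does not help with the differential inequalities, only with the initial comparison. So the eigenfunction construction, as written, does not close at the threshold; the integral/Gronwall argument does, and is what the paper uses.
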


\begin{proof} By a simple comparison argument, we have
	\[u(t,x)\le {\theta}/{a}+\|u_0\|_\infty e^{-at}=:\hat u(t) \ \ {\rm for ~ }
	t\ge0,\ x\in\mathbb{R}.\]
	Hence $v$ satisfies
	\bess\left\{\begin{aligned}
		&v_{t}\le f_2(\hat u(t),\, v, w), &&t>0,~g(t)<x<h(t),\\
		&v(t,g(t))=v(t,h(t))=0, && t>0,\\
		&v(0,x)=v_0(x), &&|x|\le h_0.
	\end{aligned}\right.\eess
	Notice that ${\cal R}_0=\theta kb/acq\le 1$.	It follows that by simple calculations
	\bess \df{{\rm d} }{{\rm dt}}\int_{g(t)}^{h(t)}\big(cw+kv\big){\rm
		d}x&=&\int_{g(t)}^{h(t)}\big(cw_t+kv_t\big){\rm d}x\\
	&\le&\int_{g(t)}^{h(t)}\kk[cdw_{xx}+kb\|u_0\|_\infty e^{-at}w+(k\theta
	b/a-qc)w\rr]{\rm d}x\\
	&\le&\int_{g(t)}^{h(t)}\left(cdw_{xx}+kb\|u_0\|_\infty e^{-at}w\right){\rm d}x\\
	&\le&-cd\gamma^{-1}(h'(t)-g'(t))+kbA_3\|u_0\|_\infty e^{-at}(h(t)-g(t)).
	\eess
	Set
	\[f(t)=\int_{g(t)}^{h(t)}\big(cw+kv\big){\rm d}x,\ \ \ell(t)=h(t)-g(t),\ \
	\varphi(t)=kbA_3\|u_0\|_\infty e^{-at}.\]
	Then we have
	\[cd\ell'(t)\le-\gamma f'(t)+\gamma\varphi(t)\ell(t).\]
	Integrating the above differential inequality from $0$ to $t$ yields
	\[\ell(t)\le \ell(0)+{\gamma}(cd)^{-1}f(0)+{\gamma}(cd)^{-1}\int_{0}^{t}
	\varphi(s)\ell(s){\rm d}s.\]
	By virtue of the Gronwall inequality,
	\[\ell(t)\le\big[\ell(0)+{\gamma}(cd)^{-1}f(0)\big]\exp\left\{{{\gamma}(cd)^{-1}\int_{0}^{t}\varphi(s){\rm
			d}s}\right\}<\infty.\]
	Thus, $h_{\yy}-g_{\yy}<\yy$.\end{proof}

\subsection{The case ${\cal R}_0>1$}

In this subsection, we always assume that ${\cal R}_0>1$, and consider $d$,
$h_0$, $\mu$ and $\beta$ as varying parameters to depict the criteria for spreading and vanishing.

\begin{theorem}	\label{th4.2} Let $(u,v,w,g,h)$ be the solution of \eqref{1.1}. If $h_{\yy}-g_{\yy}<\yy$, then we have
	$$h_{\yy}-g_{\yy}\le \pi\sqrt{acd/(kb\theta-acq)}=:\Lambda.$$
	This implies that if $h_0\ge \Lambda/2$, then $h_{\yy}-g_{\yy}=\yy$. Moreover if
	$h_{\yy}-g_{\yy}=\yy$, then
	\bes\label{5.1}\dd\limsup_{t\to\yy}\|v(t,\cdot)\|_{C([g(t),h(t)])}>0, \ \
	\dd\limsup_{t\to\yy}\|w(t,\cdot)\|_{C([g(t),h(t)])}>0.
	\ees
\end{theorem}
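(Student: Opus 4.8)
The plan is to recognize the critical length $\Lambda=\pi\sqrt{acd/(kb\theta-acq)}$ as exactly the threshold $L^*(d)$ produced by Corollary \ref{cor3.1} for the linearization of the $(v,w)$-subsystem about the virus-free state. Writing the eigenvalue problem \eqref{3.6} with $m=\theta/a$ in the Section~3 notation gives $a_{11}=-q$, $a_{12}=k$, $a_{21}=b\theta/a$, $a_{22}=-c$, so that $\Gamma=a_{11}-a_{12}a_{21}/a_{22}=(kb\theta-acq)/(ac)>0$ precisely because $\mathcal R_0>1$, whence $L^*(d)=\pi\sqrt{d/\Gamma}=\Lambda$. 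Consequently, on any interval strictly longer than $\Lambda$ the principal eigenvalue $\lambda_1$ of \eqref{3.6} with $m$ slightly below $\theta/a$ stays positive, using Corollary \ref{cor3.1}(iii) together with the monotone, continuous dependence of $\lambda_1$ on $m$ and on the interval length coming from Theorem \ref{th3.1}. The recurring building block will be that, for such an interval and the positive principal eigenpair $(\phi,\psi)$, the stationary pair $(\delta\psi,\delta\phi)$ is a lower solution of the $(v,w)$-system for all small $\delta$: the inequalities close because $\lambda_1>0$ and, once $\delta$ is small enough that $\delta\phi\le\ep$, the saturation factors $1/(1+w)$ are controlled by $1/(1+\ep)$.

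For the bound under vanishing I would argue by contradiction. Assume $h_\yy-g_\yy<\yy$ but $h_\yy-g_\yy>\Lambda$. By Theorem \ref{th4.1}, $v,w\to0$ and $u\to\theta/a$ uniformly. Since $(g(t),h(t))\nearrow(g_\yy,h_\yy)$, I can fix an interval $(a_1,a_2)\subset(g_\yy,h_\yy)$ of length exceeding $\Lambda$ and choose $\ep>0$ small and $T$ large so that $(a_1,a_2)\subset(g(t),h(t))$ and $u\ge\theta/a-\ep=:m_\ep$ on $[T,\yy)\times(a_1,a_2)$, with the principal eigenvalue of \eqref{3.6} on $(a_1,a_2)$ with $m=m_\ep$ still positive. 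Choosing $\delta$ also small enough that $\delta\psi\le v(T,\cdot)$ and $\delta\phi\le w(T,\cdot)$ on $(a_1,a_2)$ (possible since $v(T,\cdot),w(T,\cdot)>0$ on this compact set), the comparison principle gives $w(t,x)\ge\delta\phi(x)$ for all $t\ge T$, so $\liminf_{t\to\yy}\|w(t,\cdot)\|_C>0$, contradicting $w\to0$. Hence $h_\yy-g_\yy\le\Lambda$.

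The implication for $h_0\ge\Lambda/2$ is then immediate: if vanishing occurred the previous paragraph would force $h_\yy-g_\yy\le\Lambda$, yet the strict monotonicity $g'(0)<0<h'(0)$, guaranteed by $w_0'(-h_0)>0>w_0'(h_0)$ in \eqref{1.2}, yields $h(t)-g(t)>h(0)-g(0)=2h_0\ge\Lambda$ for every $t>0$, so $h_\yy-g_\yy>\Lambda$, a contradiction; therefore $h_\yy-g_\yy=\yy$. For the persistence statement under spreading I again argue by contradiction, first for $w$: if $\limsup_{t\to\yy}\|w(t,\cdot)\|_C=0$ then $w\to0$, which forces $u\to\theta/a$ uniformly exactly as in the last step of Theorem \ref{th4.1}; since $h_\yy=-g_\yy=\yy$, any fixed $(-l,l)$ with $l>\Lambda/2$ lies in $(g(t),h(t))$ for large $t$, and the same stationary lower-solution argument on $(-l,l)$ gives $w\ge\delta\phi>0$ there, contradicting $w\to0$. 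Finally, if $\limsup_{t\to\yy}\|v(t,\cdot)\|_C=0$ then $kv\le\ep$ for large $t$, so $w_t-dw_{xx}\le\ep-qw$ with $w=0$ on the free boundary; comparison with the spatially constant supersolution solving $\bar w'=\ep-q\bar w$ gives $\limsup\|w(t,\cdot)\|_C\le\ep/q$, and letting $\ep\to0$ forces $w\to0$, contradicting the persistence of $w$ just proved. Hence both limsups are positive.

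I expect the main obstacle to be the stationary lower-solution verification: one must carry the saturated nonlinearities $buw/(1+w)$ and $kv/(1+w)$ through the linear eigenvalue estimate while keeping every inequality oriented correctly, which is exactly where the smallness of $w$ (so that $\delta\phi\le\ep$) and the strict positivity of $\lambda_1$ enter. Once this block is established, all three assertions reduce to choosing the interval long enough that Corollary \ref{cor3.1}(iii) delivers a positive principal eigenvalue and then applying the comparison principle.
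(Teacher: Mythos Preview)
Your proposal is correct and follows essentially the same route as the paper: argue by contradiction, use Theorem~\ref{th4.1} to push $u$ close to $\theta/a$, fix a subinterval of length exceeding $\Lambda_\ep$, and plant the stationary lower solution $(\delta\psi,\delta\phi)$ built from the principal eigenpair of \eqref{3.6} to contradict $w\to 0$. One cosmetic point: in the spreading case you invoke $h_\yy=-g_\yy=\yy$, which in the paper is only established afterwards (Theorem~\ref{th4.3}); all you actually need, and all the paper uses, is $h_\yy-g_\yy=\yy$, which already guarantees some fixed interval $(l_1,l_2)$ of length $>\Lambda_\ep$ eventually sits inside $(g(t),h(t))$. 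Your treatment of the equivalence between persistence of $v$ and of $w$ via the one-line comparison $w_t-dw_{xx}\le kv-qw$ is a bit more explicit than the paper's, which simply asserts the equivalence.
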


\begin{proof} Due to Theorem \ref{th4.1} and $h_{\yy}-g_{\yy}<\yy$, we see that $\dd\lim_{t\to\infty}u(t,\cdot)=\theta/{a}$ uniformly in $\mathbb{R}$, and
	\bes\label{4.3}
	\lim_{t\to\yy}\|v(t,\cdot)\|_{C([g(t),h(t)])}=0,\ ~\ ~
	\lim_{t\to\yy}\|w(t,\cdot)\|_{C([g(t),h(t)])}=0.
	\ees
	Arguing indirectly,	if $h_\yy-g_\yy>\Lambda$, then there exists $T>0$ such that for
	any small $\ep>0$ satisfying $\frac{kb}{c}(\frac{\theta}{a}-\ep)-q>0$, we have
	\bess
	&u(t,x)>{\theta}/{a}-\ep,\ \ \forall \ t\ge T, \  x\in\mathbb{R};\\
	&h(t)-g(t)>\pi \sqrt{d}\kk[kb({\theta}/{a}-\ep)/c-q\right]^{-1/2}=:\Lambda_\ep,\
	\ \forall \ t\ge T.
	\eess
	Then for any $[l_1,l_2]\subseteq (g(T),h(T))$ and $l_2-l_1>\Lambda_\ep$, we have
	\bes
	\left\{\begin{aligned}
		&v_t\ge f_2({\theta}/{a}-\ep,\,v, w), &&t>T,\ \ l_1<x<l_2,\\
		&w_t-dw_{xx}=f_3(v, w), &&t>T,\ \ l_1<x<l_2,\\
		& w(t,x)>0,&&t>T,\ x=l_i,\ i=1,2,\\
		&v(T,x)>0,\ w(T,x)>0,&&l_1\le x\le l_2.
	\end{aligned}\right.
	\label{x.c}\ees
	
	Consider the following eigenvalue problem
	\bes
	\left\{\begin{aligned}
		&d\phi_{xx}-q\phi+k\psi =\lambda \phi, &&l_1<x<l_2,\\
		&b(\theta/{a}-\ep)\phi-c\psi =\lambda \psi, &&l_1<x<l_2,\\
		&\phi(l_i)=0,&&i=1,2.
	\end{aligned}\right.\label{3.3}
	\ees
	Denote the principal eigenpair of \eqref{3.3} by $(\lambda_1,(\phi,\psi))$  with $\max_{x\in[l_1,l_2]}|\phi(x)|=1$. It
	follows from Corollary \ref{cor3.1} that
	$\lambda_1>0$ due to $l_2-l_1>\Lambda_\ep$. Let
	\[\underline{v}(x)=\delta \psi(x), \ \ \ \underline{w}(x)=\delta \phi(x)\]
	with $\delta>0$ to be determined later.
	
	We claim that there exists $\delta_0>0$ sufficiently small such that for any $0<\delta<\delta_0$, we have
	\bes
	\left\{\begin{aligned}
		&0< f_2(\theta/a-\ep,\,\underline v, \underline w), \ \
		l_1<x<l_2,\\
		&-d\underline w_{xx}\le f_3(\underline v, \underline w),\ \
		l_1<x<l_2.
	\end{aligned}\right.\label{3.4}
	\ees
	In fact, we have
	\bess
	f_2(\theta/a-\ep,\,\underline v, \underline w)=
	\delta\frac{b(\theta/a-\ep)\phi}{1+\delta\phi}-c\delta\psi
	=\delta\frac{(c+\lambda_1)\psi}{1+\delta\phi}-c\delta\psi =\delta\left(\frac{c+\lambda_1}{1+\delta\phi}-c\right)\psi>0
	\eess
	provided that $\delta>0$ is small. The proof of the second inequality of \eqref{3.4} can
	be done in a similar manner.
	
	Furthermore, one can choose small $\delta>0$ such that $v(T,x)\ge \delta\psi(x)$ and $w(T,x)\ge \delta\phi(x)$ for $x\in[l_1,l_2]$.
	Then $(\underline{v},\underline{w})$ satisfies
	\bess
	\left\{\begin{aligned}
		&\underline v_t\le f_2(\theta/a-\ep,\,\underline v, \underline w), &&t>T,\ \ l_1<x<l_2,\\
		&\underline w_t-d\underline w_{xx}\le f_3(\underline v, \underline w), &&t>T,\ \ l_1<x<l_2,\\
		&\underline v(t,x)=0, \ \underline w(t,x)=0,&&t>T,\ x=l_i,\ i=1,2,\\
		&\underline v(T,x)\le v(T,x),\ \underline w(T,x)\le w(T,x),&&l_1\le x\le l_2.
	\end{aligned}\right.
	\eess
	By virtue of the comparison principle,
	\bes
	v(t,x)\ge \underline{v}(x),\ \ \ w(t,x)\ge \underline{w}(x),  \ \ t\ge T, \ l_1\le x\le
	l_2.\label{x.d}\ees
	This is a contradiction with \eqref{4.3}.
	
	We now assume $h_{\yy}-g_{\yy}=\yy$ and prove \eqref{5.1}. By the comparison principle, it is easy to see that $\lim\limits_{t\to\yy}\|v(t,\cdot)\|_{C([g(t),h(t)])}=0$ if and only if
	$\lim\limits_{t\to\yy}\|w(t,\cdot)\|_{C([g(t),h(t)])}=0$. Hence if we assume that one of the two limits in \eqref{5.1} does not hold,
	we can similarly obtain $\dd\lim_{t\to\infty}u(t,\cdot)=\theta/{a}$ uniformly in
	$\mathbb{R}$. By $h_{\yy}-g_{\yy}=\yy$, we can derive an analogous contradiction as above. The proof is ended.
\end{proof}

Obviously, $h_0\ge \Lambda/2$ is equivalent to $d\le 4h_0^2q({\cal
	R}_0-1)\pi^{-2}=:D$. So the above result suggests that
when ${\cal R}_0>1$, the larger initial habitat $[-h_0,h_0]$ or the lower dispersal
rate $d$ of the virus is, the more possibility of successful spreading is observed.

\begin{theorem}\label{th4.3} If $h_{\yy}-g_{\yy}=\yy$, then $h_{\yy}=\yy$ and $g_{\yy}=-\yy$.
\end{theorem}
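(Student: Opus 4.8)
The plan is to argue by contradiction. Since $h_\yy-g_\yy=\yy$, if the conclusion fails then, after possibly interchanging the roles of the two boundaries (which merely swaps $\mu$ and $\beta$ and the data, the two cases being entirely analogous), I may assume $h_\yy<\yy$ while $g_\yy=-\yy$. The first step is to show that the virus dies out on the right half. From Theorem \ref{th2.3}, $h'$ is uniformly continuous on $[1,\yy)$, and together with $h_\yy<\yy$ this forces $h'(t)\to0$. Writing $f_3(v,w)=\frac{kv}{1+w}-qw\ge-qw$, the pair $(w,h)$ satisfies on the fixed-left/moving-right region $(0,h(t))$ all the hypotheses of Lemma \ref{l3.2} with $x_0=0$, $\eta=h$, $C=q$, $m\equiv0$, and $\beta$ in place of $\mu$: indeed $w\ge0$ at $x=0$, $w=0$ with $h'=-\beta w_x$ at $x=h(t)$, and the required $C^1$ bound is the first estimate of \eqref{x.a}. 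Hence $\max_{0\le x\le h(t)}w(t,x)\to0$, so in particular $w(t,0)\to0$, and then, exactly as in the proof of Theorem \ref{th4.1}, $v(t,0)\to0$ and $u(t,0)\to\theta/a$.

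The remaining and main task is to contradict $w(t,0)\to0$ using $g_\yy=-\yy$. Theorem \ref{th4.2} provides the persistence \eqref{5.1}; combined with the vanishing of $w$ on $[0,h(t)]$ just obtained, this yields times $t_n\to\yy$ and points $x_n\in(g(t_n),0)$ with $w(t_n,x_n)\ge\eta_0>0$ for a fixed $\eta_0$. Because $w(t,0)\to0$ and $w$ is uniformly Lipschitz in $x$ (again \eqref{x.a}), the points $x_n$ stay bounded away from $0$. If the $x_n$ remain bounded, I would transport this lower bound to $x=0$ by the parabolic weak Harnack inequality applied to the nonnegative supersolution $w$ of $w_t-dw_{xx}+qw\ge0$ on a fixed space-time cylinder reaching $x=0$, which is legitimate since $g(t)<-h_0<0<h_0<h(t)$ for every $t>0$; this contradicts $w(t,0)\to0$.

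The genuinely hard case is $x_n\to-\yy$, i.e. the virus invades through a left front receding to $-\yy$, and here one must establish the expected ``filling in'' behind the front. At a point just reached by the front the uninfected density has relaxed to $\theta/a$ (ahead of the front $v=w\equiv0$, so $u$ solves $u_t=\theta-au$), and since ${\cal R}_0>1$ is equivalent to $kb\theta/(ac)>q$, Corollary \ref{cor3.1} makes the principal eigenvalue of \eqref{3.6} positive on any window of length exceeding $\Lambda=\pi\sqrt{acd/(kb\theta-acq)}$. Using the associated eigenfunction to build a stationary lower solution $(\delta\psi,\delta\phi)$ as in the proof of Theorem \ref{th4.2}, and then sliding and iterating this window rightward by a staircase comparison in the spirit of \cite{WZjdde17,WZhang16}, one would propagate a uniform positive lower bound for $w$ across every compact set, in particular across the fixed point $x=0$. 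This once more contradicts $w(t,0)\to0$ and finishes the proof.

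I expect this last step to be the principal obstacle: one must secure a lower bound on $u$ close to $\theta/a$ on each freshly invaded window so that the sliding subsolution remains admissible, and one must control the overlap between consecutive windows so that the positive lower bound actually reaches the stationary point $x=0$ rather than merely trailing the receding front. The first two steps, by contrast, are routine given Lemma \ref{l3.2}, Theorem \ref{th2.3} and the persistence \eqref{5.1}.
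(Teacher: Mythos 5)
Your proposal is incomplete at exactly the point you flag: the case where the points $x_n$ carrying the mass of $w$ recede to $-\yy$. The ``staircase'' propagation of a positive lower bound from a front escaping to $-\yy$ back to the fixed point $x=0$ is not carried out, and it is not a routine adaptation of the proof of Theorem \ref{th4.2} --- one would have to control $u$ and the overlap of windows uniformly along an unbounded sequence of translates, which is a genuinely new argument not supplied here. In addition, the bounded-$x_n$ branch via weak Harnack, while plausible, is also more delicate than stated (the constants degrade with the distance from $x_n$ to $0$ and with the waiting time). So as written the proof has a real gap in its main step.

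The gap disappears once you notice that Lemma \ref{l3.2} is stated for an arbitrary fixed left endpoint $x_0<\eta_0$ with only the one-sided condition $w\ge 0$ at $x=x_0$; there is no reason to take $x_0=0$. This is what the paper does: fix $L>\Lambda+2$, use $g_\yy=-\yy$ to find $T_0$ with $g(T_0)<-L$, and apply Lemma \ref{l3.2} on $[-L,h(t)]$ (using $h'(t)\to 0$ and the bound \eqref{x.a}) to get $\|w(t,\cdot)\|_{C([-L,\,h(t)])}\to 0$. Arguing as in Theorem \ref{th4.1} then gives $v\to 0$ and $u\to\theta/a$ uniformly on $[1-L,h(T_0)]$. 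Since this \emph{fixed} interval has length exceeding $\Lambda_\ep$ for small $\ep$, one can place a window $[l_1,l_2]\subset(1-L,h(T_0))$ with $l_2-l_1>\Lambda_\ep$ and run the stationary eigenfunction subsolution of Theorem \ref{th4.2} there, obtaining $w(t,x)\ge\delta\phi(x)>0$ on $[l_1,l_2]$ for all large $t$ --- directly contradicting the decay of $w$ on $[-L,h(t)]$. No appeal to \eqref{5.1}, to Harnack inequalities, or to a sliding argument is needed. Your first step is correct but chooses the left endpoint too conservatively; choosing it at $-L$ with $L$ large is the missing idea.
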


\begin{proof} By way of contradiction, we assume that $h_{\yy}<\yy$ and $g_{\yy}=-\yy$. If we take $L>\Lambda+2$, where $\Lambda$ is defined in Theorem \ref{th4.2}, then there exists $T_0>0$ such that $g(T_0)<-L$. Then $w$ satisfies
	\bess
	\left\{\begin{aligned}
		&w_t-dw_{xx}=f_3(v, w), &&t>T_0,\ \ -L<x<h(t),\\
		&w(t,-L)>0,\ w(t,h(t))=0,&&t>T_0,\\
		&h'=-\beta w_x(t,h(t)),&&t>T_0,\\
		&w(T_0,x)\ge 0,&&-L\le x\le h(T_0).
	\end{aligned}\right.
	\eess
	As $h_\yy<\yy$, using the second estimate in \eqref{x.a} we have $\dd\lim_{t\rightarrow\infty}h'(t)=0$. Then, using the first estimate in \eqref{x.a} and Lemma \ref{l3.2}, one can arrive at
	\bes\lim\limits_{t\to\infty}\|w(t,\cdot)\|_{C([-L,\,h(t)])}=0.
	\label{x.b}\ees
	
	Then we may argue as in the proof of Theorem \ref{th4.1} with minor modifications to derive that
	\[\lim_{t\to\yy}\max_{[1-L,\,h(T_0)]} v(t,\cdot)=0,\ \ \lim_{t\to\yy}\max_{[1-L,\,h(T_0)]}u(t,\cdot)={\theta}/{a}.\]
	There exists $T>T_0$ such that $u(t,x)\ge {\theta}/{a}-\ep$ for $(t,x)\in[T,\yy)\times[1-L,\,h(T_0)]$. Let $\ep>0$ be small enough satisfying $L-1>\Lambda_\ep$, where $\Lambda_\ep$ is defined as in Theorem \ref{th4.2}. Choose an interval $[l_1,l_2]\subset(1-L,\,h(T_0))$ with $l_2-l_1>\Lambda_\ep$. Then $(v,w)$ satisfies \eqref{x.c}. As in the proof of Theorem \ref{th4.2}, we can conclude that \eqref{x.d} holds. This is a contradiction with \eqref{x.b}.
	
	Analogously, we can prove that the case with $h_{\yy}=\yy$ and $g_{\yy}>-\yy$ also does not hold. Therefore, we must have $h_{\yy}=\yy$ and $g_{\yy}=-\yy$.\end{proof}

The following result implies that although the initial habitat is small or the
dispersal rate is fast, the spreading also can occur if the expanding rate $\mu$ or $\beta$ is appropriately large. By using similar method in the proof of \cite[Lemma 3.2]{WZh15} with some modifications, we can prove the following lemma.

\begin{lemma}\label{l4.2} If $h_0< \Lambda/2$ $($or $d> D)$, then there exists
	$\mu^0>0$ $(resp.\ \beta^0>0)$ such that if
	$\mu\ge\mu^0$ $(resp.\ \beta\ge\beta^0)$, then $h_{\yy}-g_{\yy}=\yy$.
\end{lemma}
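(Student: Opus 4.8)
The plan is to convert spreading into a finite-time statement about the width of the habitat and then force that width beyond the critical length $\Lambda$ of Theorem \ref{th4.2} by exploiting the flux law $g'(t)=-\mu w_x(t,g(t))$ with $\mu$ large; the case $\beta\ge\beta^0$ is symmetric, so I treat $\mu$. Since $g'(t)<0<h'(t)$ for all $t\ge0$ (Theorems \ref{th2.1} and \ref{th2.2}), the width $\ell(t):=h(t)-g(t)$ is strictly increasing, whence $h_\yy-g_\yy=\sup_{t\ge0}\ell(t)$. By Theorem \ref{th4.2}, $h_\yy-g_\yy<\yy$ forces $h_\yy-g_\yy\le\Lambda$; therefore it suffices to produce a finite time $T$ and a threshold $\mu^0$ so that $\mu\ge\mu^0$ implies $\ell(T)>\Lambda$, for then $h_\yy-g_\yy\ge\ell(T)>\Lambda$ excludes vanishing and yields $h_\yy-g_\yy=\yy$.

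The mechanism is a lower solution whose left boundary is driven leftward by a fixed, $\mu$-independent amount, using large $\mu$ only to license the motion. Fix the target width $\ell(T)=\Lambda+1$ and prescribe $\underline g$ through $\underline\ell(t):=h_0-\underline g(t)$ growing linearly from $2h_0$ to $\Lambda+1$ on $[0,T]$, keeping $\underline h\equiv h_0$; the latter is admissible for a lower solution because $\underline w_x(t,h_0)\le0$ gives $\underline h'=0\le-\beta\underline w_x(t,\underline h)$, and $\underline h=h_0\le h(t)$. On the bounded interval $[\underline g(t),h_0]$ put $\underline w=\delta(t)\phi_1(t,\cdot)$ and $\underline v=\delta(t)\psi_1(t,\cdot)$, where $(\lambda_1(t),(\phi_1,\psi_1))$ is the positive principal eigenpair of \eqref{3.6} on the current interval with $m$ replaced by a fixed constant $m_0>0$ satisfying $u(t,x)\ge m_0$ on the support. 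Such an $m_0$ exists independently of $\mu$: on the fixed compact set $[-(\Lambda+1-h_0),h_0]$ one has $u_t\ge\theta-(a+b)u$ with $u_0>0$, so $u$ is bounded below by a positive constant there. Because the interval length stays in the bounded range $[2h_0,\Lambda+1]$, the boundary slope obeys $\underline w_x(t,\underline g(t))\ge s_*\delta(t)$ for some $s_*>0$, and the prescribed rate $\underline\ell'\equiv r_0:=(\Lambda+1-2h_0)/T$ is a fixed constant; the free boundary inequality $\underline g'(t)\ge-\mu\underline w_x(t,\underline g(t))$ then reads $r_0\le\mu s_*\delta(t)$, which holds once $\mu\ge\mu^0:=r_0/(s_*\delta_*)$, where $\delta_*>0$ is a uniform lower bound for $\delta(t)$ obtained next.

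The amplitude $\delta(t)$ is governed by an ODE chosen so that $(\underline v,\underline w)$ is a subsolution of the $(v,w)$-system. Using the eigen-relation for $(\phi_1,\psi_1)$ together with $u\ge m_0$ and the monotonicity of $f_2$ in $u$, the two differential inequalities reduce, after cancellation, to a single requirement of the form $\delta'\le\lambda_1(t)\delta-C\delta^2$, with $C$ absorbing the moving-domain term $\phi_{1,t}$ and the saturation remainder $k\delta^2\psi_1\phi_1/(1+\delta\phi_1)$. Since the interval length is confined to $[2h_0,\Lambda+1]$, both $\lambda_1(t)$ (via Corollary \ref{cor3.1} and Theorem \ref{th3.1}) and $\phi_{1,t}$ are bounded uniformly in $\mu$; this is precisely why I aim only at the fixed width $\Lambda+1$ rather than at $\yy$. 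Consequently, for $\delta_0$ small the solution of this ODE remains in $[\delta_*,\delta_0]$ on $[0,T]$ with $\delta_*>0$ independent of $\mu$. With the subsolution in hand, and the boundary and initial comparisons arranged exactly as in the proof of Theorem \ref{th4.2}, the comparison principle (Lemma \ref{l3.3}) gives $g(t)\le\underline g(t)$ on $[0,T]$, so $\ell(T)\ge\underline\ell(T)=\Lambda+1>\Lambda$.

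The principal difficulty, and the point of departure from \cite[Lemma 3.2]{WZh15}, is the verification of the coupled subsolution inequalities across the transition from the subcritical initial width $2h_0<\Lambda$, where $\lambda_1(0)<0$ and the amplitude must be permitted to decay, up to the supercritical width $\Lambda+1$: one must keep $\delta(t)$ bounded below by a $\mu$-independent constant while simultaneously controlling the moving-domain term $\phi_{1,t}$ and the nonlinear infection term $buw/(1+w)$ through the (non-diffusive) $v$-equation. Decoupling large $\mu$, which enters only through the single scalar threshold $\mu^0$, from the $O(1)$ expansion of the support is exactly what makes all the constants uniform in $\mu$ and closes the argument.
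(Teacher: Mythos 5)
Your outer scaffolding is sound: reducing the lemma to ``$h(T)-g(T)>\Lambda$ at some finite time $T$'' via Theorem \ref{th4.2}, extracting a $\mu$-independent positive lower bound $m_0$ for $u$ on a fixed compact set from $u_t\ge\theta-(a+b)u$, and using large $\mu$ only to validate the Stefan inequality for a prescribed boundary motion are all correct, and the first step is indeed the standard reduction. The fatal problem is the subsolution itself. Take $L(t)=h_0-\underline g(t)$, $\underline g'(t)=-r_0$, and (up to normalization) $\phi_1(t,x)=\sin\bigl(\pi(x-\underline g(t))/L(t)\bigr)$, $\psi_1=\frac{bm_0}{c+\lambda_1(t)}\phi_1$. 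Then
\[
\phi_{1,t}(t,x)=\frac{\pi r_0}{L^2(t)}\bigl(L(t)-(x-\underline g(t))\bigr)\cos\frac{\pi(x-\underline g(t))}{L(t)},
\qquad
\phi_{1,t}\bigl(t,\underline g(t)\bigr)=\frac{\pi r_0}{L(t)}>0,
\]
so $\phi_{1,t}$ and $\psi_{1,t}$ do \emph{not} vanish at the moving endpoint, whereas every competing term in the two required inequalities --- $f_2(m_0,\underline v,\underline w)$, $f_3(\underline v,\underline w)$, $d\underline w_{xx}$, and the $\delta'$ terms --- is proportional to $\phi_1$ and tends to $0$ there. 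Letting $x\downarrow\underline g(t)$, the $v$-inequality collapses to $\delta(t)\frac{bm_0}{c+\lambda_1}\frac{\pi r_0}{L}\le 0$ and the $w$-inequality to $\delta(t)\frac{\pi r_0}{L}\le 0$, both false for every $\delta(t)>0$ and every $r_0>0$. Consequently your key claim --- that the inequalities ``reduce, after cancellation, to $\delta'\le\lambda_1(t)\delta-C\delta^2$'' with the moving-domain term absorbed into $C$ --- is not available: $\phi_{1,t}/\phi_1\to\infty$ at the front, so no ODE for the amplitude can absorb it.

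This failure is structural, not computational. Since $f_2$ and $f_3$ vanish at $(v,w)=(0,0)$, any subsolution with expanding support must enter each newly swallowed point with \emph{nonpositive} time derivative; but a profile that vanishes linearly at the front and whose zero set moves outward (any normalization of the eigenpair does this) necessarily enters new points with strictly positive time derivative. For the diffusive component $w$ this could be repaired by a profile that is convex near the front, so that $d\underline w_{xx}$ pays for the translation term (exactly as in semi-wave subsolutions); your sine profile has the wrong curvature, since $-d\underline w_{xx}=+\frac{d\pi^2}{L^2}\underline w\ge0$. For the nondiffusive component $v$ there is no such term at all, so $\underline v$ must vanish to strictly higher order than $\underline w$ at the front --- which the eigenpair, whose components vanish at the same linear rate, can never do. A secondary point: Lemma \ref{l3.3} is stated only for upper solutions, so the lower-solution free-boundary comparison you invoke must be formulated and justified separately. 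Note finally that the paper does not argue this way at all: it proves the lemma by adapting the construction of \cite[Lemma 3.2]{WZh15}. If you want a self-contained proof along your lines, you must redesign the profiles near the moving front (convex $\underline w$, $\underline v$ quadratic in the distance to the front, small fixed speed $r_0$); your reduction via Theorem \ref{th4.2} and the threshold $\mu^0=r_0/(s_*\delta_*)$ could then be kept.
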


The above lemma also indicates that if $\gamma=\max\left\{\mu,\beta\right\}\ge\max\{\mu^0,\beta^0\}$, then
$h_{\yy}-g_{\yy}=\yy$. Instinctively, we deem that if ${\cal R}_0>1$, $h_0< \Lambda/2$ (or $d> D$), $\mu$ and $\beta$ both are small, then the
vanishing will happen. The lemma listed below  supports our belief.

\begin{lemma} Assume $h_0< \Lambda/2$ $($or $d> D)$. Then there exists $\mu_0>0$ such that when $\gamma=\max\left\{\mu,\beta\right\}\le\mu_0$, we must have
	$h_{\yy}-g_{\yy}<\yy$.
\end{lemma}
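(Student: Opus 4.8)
The plan is to prove vanishing by constructing a supersolution $(\bar u,\bar v,\bar w,\bar g,\bar h)$ whose free boundaries stay confined to a fixed bounded interval, and then to invoke the comparison principle (Lemma~\ref{l3.3}) to force $h\le\bar h$, $g\ge\bar g$, whence $h_\infty-g_\infty<\infty$. The whole point is that smallness of $\gamma=\max\{\mu,\beta\}$ makes the free boundary conditions $\bar g'\le-\mu\bar w_x$, $\bar h'\ge-\beta\bar w_x$ easy to satisfy while keeping $\bar h,\bar g$ bounded by a length below the critical value $\Lambda$.

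First I set up the eigenvalue ingredients. Since $2h_0<\Lambda$ and $\Lambda=\pi\sqrt{acd/(kb\theta-acq)}$ equals the critical length $L^*(\theta/a)=\pi\sqrt{d/\Gamma(\theta/a)}$ with $\Gamma(m)=kbm/c-q>0$ (because $\mathcal{R}_0>1$), continuity of $m\mapsto L^*(m)$ lets me pick $\eta,\delta>0$ so small that, with $m:=\theta/a+\eta$ and $L:=h_0(1+\delta)$, one has $2L<L^*(m)$. By Corollary~\ref{cor3.1}(iii) the principal eigenvalue $\lambda_1$ of \eqref{3.6} on $(-L,L)$ with this $m$ is then negative; I let $(\phi,\psi)$ be its positive principal eigenfunction, so that $\phi(\pm L)=\psi(\pm L)=0$, $\psi=\kappa\phi$ with $\kappa=bm/(c+\lambda_1)>0$, and explicitly $\phi(x)=\cos(\pi x/(2L))$. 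All these constants depend only on the system parameters, not on $\mu,\beta$.

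Next comes the supersolution. Because $u(t,x)\le\theta/a+\|u_0\|_\infty e^{-at}$, there is $T_0>0$ (independent of $\mu,\beta$) with $u(t,x)\le m$ for $t\ge T_0$; I run the comparison from time $T_0$, which is legitimate since \eqref{1.1} is autonomous. I take $\bar u\equiv m$ (so $\bar u_t=0\ge\theta-a\bar u$), and on $|x|\le\sigma(t)$ with $\sigma(t)=h_0\big(1+\delta-\tfrac\delta2 e^{-\gamma_1(t-T_0)}\big)\in[h_0(1+\tfrac\delta2),L]$ increasing, I set
\[\bar w(t,x)=Me^{-\gamma_1(t-T_0)}\phi\big(Lx/\sigma(t)\big),\qquad \bar v=\kappa\bar w,\qquad \bar h=-\bar g=\sigma,\]
with $0<\gamma_1\le|\lambda_1|$ and $M$ large, so that $\bar v=\bar w=0$ on $x=\pm\sigma(t)$. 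Writing $\xi=Lx/\sigma$ and using $\phi''=-\tfrac{\pi^2}{4L^2}\phi$, a direct computation gives $\bar w_t-d\bar w_{xx}=Me^{-\gamma_1(t-T_0)}\big[(\tfrac{d\pi^2}{4\sigma^2}-\gamma_1)\phi-\phi'\tfrac{\xi\sigma'}{\sigma}\big]$. Since $f_3(\bar v,\bar w)\le(k\kappa-q)\bar w$ and the eigenvalue identity reads $k\kappa-q=\lambda_1+\tfrac{d\pi^2}{4L^2}$, the inequality $\bar w_t-d\bar w_{xx}\ge f_3(\bar v,\bar w)$ reduces to $\phi\big[(|\lambda_1|-\gamma_1)+\tfrac{d\pi^2}{4}(\tfrac1{\sigma^2}-\tfrac1{L^2})\big]\ge\phi'\tfrac{\xi\sigma'}{\sigma}$, which holds because $\phi\ge0$, $\sigma\le L$ and $\gamma_1\le|\lambda_1|$ make the left side nonnegative while $\phi'\xi\le0$, $\sigma'\ge0$ make the right side nonpositive. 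The same two sign facts yield $\bar v_t-\lambda_1\bar v\ge0$, and since $\bar u\le m$ gives $f_2(\bar u,\bar v,\bar w)\le\lambda_1\bar v$, the inequality $\bar v_t\ge f_2(\bar u,\bar v,\bar w)$ follows.

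Finally I tune the constants and the boundaries. The lateral condition $\bar h'\ge-\beta\bar w_x(t,\bar h)$ becomes, after cancelling $e^{-\gamma_1(t-T_0)}$ and using $\sigma\ge h_0$, the requirement $h_0\tfrac\delta2\gamma_1\ge\beta M\pi/(2h_0)$, i.e. $\beta\le\beta_0:=h_0^2\delta\gamma_1/(M\pi)$; the left boundary gives the same bound on $\mu$ by symmetry. For the initial ordering I use that, by the uniform gradient bound of Theorem~\ref{th2.3} together with the local bound on $[0,1]$, $|g'|,h'\le\gamma\tilde C$, so $h(T_0)\le h_0+\gamma\tilde C T_0$; imposing $\gamma\le h_0\delta/(4\tilde C T_0)$ keeps $[g(T_0),h(T_0)]$ inside $[-\sigma(T_0),\sigma(T_0)]$ with a definite gap, on which $\phi(\xi)\ge c_0>0$ for a fixed $c_0$, so that $M\ge\max\{A_3,\,A_2/\kappa\}/c_0$ secures $w(T_0,\cdot)\le\bar w(T_0,\cdot)$ and $v(T_0,\cdot)\le\bar v(T_0,\cdot)$. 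Setting $\mu_0:=\min\{\beta_0,\ h_0\delta/(4\tilde C T_0),\ 1\}$ and fixing constants in the order $\eta,\delta,\lambda_1,\phi,\kappa,\gamma_1\to T_0\to c_0,M\to\beta_0\to\mu_0$ (so there is no circularity), Lemma~\ref{l3.3} applied on $[T_0,\infty)$ gives $h(t)\le\sigma(t)\le L$ and $g(t)\ge-L$, hence $h_\infty-g_\infty\le 2L<\infty$. The main obstacle is precisely this bookkeeping: arranging the chain of smallness choices so that $M$, and therefore $\beta_0$, is a fixed constant while the supersolution domain at the starting time $T_0$ still traps the slowly expanding actual support, all the while verifying the two reaction inequalities through the eigenvalue identity.
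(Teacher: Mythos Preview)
Your construction is correct and the supersolution inequalities are verified carefully; the sign argument $\phi'\xi\le 0$ is the right way to dispose of the stretching term, and the use of the eigenvalue identity $k\kappa-q=\lambda_1+d\pi^2/(4L^2)$ is exactly what one needs. The approach, however, is genuinely different from the paper's. The paper starts the comparison at $t=0$ with the time–dependent bound $\hat u(t)=\theta/a+\|u_0\|_\infty e^{-at}$, uses the eigenvalue problem at $m=\theta/a$, and absorbs the excess $\hat u(t)-\theta/a$ into an integrable exponent $f(t)=M\exp\{\int_0^t[\tilde\phi\,b(\hat u-\theta/a)+\lambda_1]\}$; the radius $r(t)=(h_0^2+\gamma\pi\tilde\phi\int_0^t f)^{1/2}$ then begins at $h_0$ and stays below a chosen $l\in(h_0,\Lambda/2)$ once $\gamma\le\mu_0=(l^2-h_0^2)/(\pi\tilde\phi\int_0^\infty f)$. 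This sidesteps entirely the need to control how far $h,g$ have moved before the comparison starts. Your route, by contrast, freezes $\bar u\equiv\theta/a+\eta$ and must therefore wait until a fixed time $T_0$; the price is that you need $h(T_0)-h_0\le\gamma\tilde C\,T_0$ with $\tilde C$ independent of $\mu,\beta$. That estimate is true (it follows from the standard barrier construction near the free boundary, which bounds $|w_x(t,h(t))|$ by a constant depending only on $A_3$, $d$ and the reaction, not on $\mu,\beta$), but it is not literally the content of Theorems~\ref{th2.2}--\ref{th2.3} as stated, since the constants $A_4$ and $C$ there are not asserted to be uniform in $\mu,\beta$. You should cite the barrier argument directly rather than those theorems. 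Once that point is secured, your argument is complete and arguably more transparent than the paper's integral-radius construction.
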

\begin{proof}Let $\hat u$ be the unique solution of the problem
	\bess
	\hat u_{t}=\theta-a\hat u,\ \ t>0;\ \ \
	\hat u(0)=\max\{\|u_0\|_{\yy},\,\theta/a\}.
	\eess
	Then $\hat u(t)\ge\theta/a$ and $\dd\lim_{t\to\yy}\hat u(t)=\theta/a$. By the comparison principle, $u(t,x)\leq\hat u(t)$ in $[0,\yy)\times\mathbb{R}$. For any fixed $h_0<l<\Lambda/2$, we consider the following eigenvalue problem
	\bess
	\left\{\begin{aligned}
		&d\phi_{xx}-q\phi+k\psi =\lambda \phi, &&-l<x<l,\\
		&(b\theta/a)\phi-c\psi =\lambda \psi, &&-l<x<l,\\
		&\phi(\pm l)=0.
	\end{aligned}\right.
	\eess
	In view of Corollary \ref{cor3.1}, the principal eigenvalue $\lambda_1<0$ since $2l<\Lambda$. Moreover, by \eqref{bb}, there exists a positive constant $\tilde{\phi}$ such that
	\bes
	-d\df{\pi^2}{(2l)^2}\tilde{\phi}-q\tilde{\phi}+k =\lambda_1
	\tilde{\phi},\ \ \
	\frac{b\theta}a\tilde{\phi}-c =\lambda_1.
	\label{yy}\ees
	
	Define
	\bess
	&f(t)=M\exp\left\{\dd\int_{0}^{t}\left[\tilde{\phi} b(\hat
	u(s)-{\theta}/{a})+\lambda_1\right]{\rm d}s\right\},\\
	&r(t)=\left(h^2_0+\gamma\pi\tilde{\phi}\dd\int_{0}^{t}f(s){\rm d}s\right)^{1/2},\\
	&\hat v(t,x)=f(t)\cos\df{\pi x}{2r(t)},\ \
	\hat w(t,x)=\tilde{\phi} f(t)\cos\df{\pi x}{2r(t)}, \ \ -r(t)\le x\le r(t),
	\eess
	where $\gamma=\max\left\{\mu,\beta\right\}$, and $M>0$ is taken large so that
	\bess
	v_0(x)\le M\cos\df{\pi x}{2h_0}=\hat v(t,0),\ \ w_0(x)\le \tilde{\phi}
	M\cos\df{\pi x}{2h_0}=\hat w(t,0)\ \ \mbox{in}\ \ [-h_0,h_0].
	\eess
	As $\dd\lim_{t\to\yy}\hat u(t)=\theta/a$ and $\lambda_1<0$, it follows that
	$\int_{0}^{\yy}f(s){\rm d}s<\yy$. Clearly, $r'(t)>0$ for $t\ge0$. Set
	\[\mu_0=\df{l^2-h^2_0}{\pi\tilde\phi\int_{0}^{\yy}f(s){\rm d}s}.\]
	Then $r(t)<l$ for $t\ge0$ provided $0<\gamma\le \mu_0$.
	
	Using \eqref{yy} and $\hat u(t)\ge\theta/a$, $r(t)<l$ for all $t\ge0$, by a series of calculations we have
	\bess
	\hat v_t-f_2(\hat u, \hat{v}, \hat{w})&\ge& f(t)\cos\df{\pi
		x}{2r(t)}\kk(\lambda_1+c-\df{\theta}{a}b\tilde{\phi}\rr)=0,\\
	\hat{w}_t-d\hat{w}_{xx}-f_3(\hat{v}, \hat{w})&\ge&f(t)\cos\df{\pi
		x}{2r(t)}\left[\tilde{\phi}^2b\kk(\hat{u}-\frac{\theta}{a}\rr)
	+d\tilde{\phi}\kk(\df{\pi}{2r(t)}\rr)^2+\lambda_1\tilde{\phi}+q\tilde{\phi}-k\right]\\
	&=&\tilde{\phi}f(t)\cos\df{\pi
		x}{2r(t)}\left[\tilde{\phi}b\kk(\hat{u}-\frac{\theta}{a}\rr)+d\kk(\df{\pi}{2r(t)}\rr)^2
	-d\kk(\df{\pi}{2l}\rr)^2\right]\ge0
	\eess
	for $t>0$ and $-r(t)< x< r(t)$. And we easily see
	\[-r'(t)=-\gamma\hat{w}_x(t,-r(t)),\ \ r'(t)=-\gamma\hat{w}_x(t,r(t)).\]
	Thus for any $0<\gamma\le\mu_0$, $(\hat{u},\hat{v},\hat{w},-r,r)$ satisfies $r(0)=h_0$ and
	\bess
	\left\{\begin{aligned}
		&\hat{u}_t=\theta-a\hat{u},&&t>0,\\
		&\hat v_t\ge f_2(\hat u, \hat{v}, \hat{w}),& &t>0,~-r(t)<x<r(t),\\
		&\hat w_t-d\hat w_{xx}\ge f_3(\hat{v}, \hat{w}),& &t>0,~-r(t)<x<r(t),\\
		&\hat v(t,\pm r(t))=\hat w(t,\pm r(t))=0,& &t>0,\\
		&-r'(t)\le-\mu \hat w_x(t,-r(t)), \
		r'(t)\ge-\beta \hat w_x(t,r(t)),& &t\ge0,\\
		&\hat{u}(0)\ge u_0(x),&&-\infty<x<\infty,\\
		&\hat v(0,x)\ge v_0(x),~ \hat w(0,x)\ge w_0(x),& &|x|\le h_0.
	\end{aligned}\right.
	\eess
	By the comparison principle (Lemma \ref{l3.3}), $-r(t)\le g(t)$, $h(t)\le r(t)$ for $t\ge 0$. As a result, we have
	$$	g_{\yy}\ge-\lim_{t\to\yy}r(t)\ge-l,\ ~\ ~h_{\yy}\le\lim_{t\to\yy}r(t)\le l,$$
	which implies $h_{\yy}-g_{\yy}<\yy$. This completes the proof.
\end{proof}

According to the above proof, we can see that $\mu_0$ is independent of $v_0$ and
$w_0$ and strictly decreasing in $M$.
Thus for any given $\mu$ and $\beta$, there exists $M>0$ sufficiently small such
that $\gamma\le \mu_0$. Meanwhile, if both $v_0$ and $w_0$ are small enough such that for such
$M$
\bess
v_0(x)\le M\cos\df{\pi x}{2h_0},\ \ w_0(x)\le \tilde{\phi} M\cos\df{\pi
	x}{2h_0},\ \ \forall \ x\in[-h_0,h_0],
\eess
we still can derive $h_{\yy}-g_{\yy}<\yy$ by the above arguments. Hence we
have the following conclusion.
\begin{remark} Assume $h_0< \Lambda/2 \, (d> D)$, and that $(v_0,w_0)$
	satisfies \eqref{1.2}. Then vanishing happens if both $v_0$
	and $w_0$ are small enough.
\end{remark}
Combining the above two lemmas, by the similar arguments in \cite[Theorem
5.2]{WZjdde17} we can show the following conclusion.
\begin{theorem}If $h_0< \Lambda/2 \, (d> D)$. There exists $0<\mu_*\le\mu^*$
	such that $h_{\yy}-g_{\yy}<\yy$ if
	$\gamma\le\mu_*$ or $\gamma=\mu^*$, and $h_{\yy}-g_{\yy}=\yy$ if $\gamma>\mu^*$.
\end{theorem}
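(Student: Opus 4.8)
The plan is to regard the pair $(\mu,\beta)$ as a point of the open quadrant $(0,\infty)^2$ and to analyse the \emph{vanishing set} $V=\{(\mu,\beta):h_\infty-g_\infty<\infty\}$ together with its complement, the spreading set $S$. The first step is a monotonicity property: if $(\mu_1,\beta_1)\le(\mu_2,\beta_2)$ componentwise, then the corresponding habitats satisfy $g_2(t)\le g_1(t)$ and $h_1(t)\le h_2(t)$ for all $t\ge0$, so that $h_\infty-g_\infty$ is nondecreasing in each of $\mu$ and $\beta$. I would obtain this from the comparison principle, Lemma \ref{l3.3}: the solution attached to the larger pair is an upper solution for the problem driven by the smaller one, because $w_{2,x}(t,g_2(t))>0$ and $w_{2,x}(t,h_2(t))<0$ yield $g_2'(t)=-\mu_2 w_{2,x}(t,g_2(t))\le-\mu_1 w_{2,x}(t,g_2(t))$ and $h_2'(t)=-\beta_2 w_{2,x}(t,h_2(t))\ge-\beta_1 w_{2,x}(t,h_2(t))$, which are exactly the boundary inequalities required by Lemma \ref{l3.3}. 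Hence $S$ is upward closed and $V$ is downward closed in the product order.

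Next I would locate $V$ using the two lemmas already proved. The vanishing lemma for small expansion rates supplies a constant $\mu_0>0$ with $(0,\mu_0]^2\subseteq V$, so $V\ne\emptyset$; Lemma \ref{l4.2} and its remark show that spreading occurs as soon as $\mu\ge\mu^0$ or $\beta\ge\beta^0$, so $V\subseteq[0,\mu^0)\times[0,\beta^0)$ is bounded. With $V$ nonempty, bounded and downward closed, I define $\mu_*=\sup\{r>0:(0,r]^2\subseteq V\}$, the side of the largest axis-parallel square sitting inside $V$, and $\mu^*=\sup\{\max\{\mu,\beta\}:(\mu,\beta)\in V\}$, the largest value of $\gamma$ attained on $V$; by construction $0<\mu_*\le\mu^*<\infty$. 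If $\gamma\le\mu_*$ then $(\mu,\beta)\in(0,\mu_*]^2\subseteq V$ and vanishing holds, while if $\gamma>\mu^*$ then $(\mu,\beta)\notin V$ and spreading holds.

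The decisive step is the borderline value $\gamma=\mu^*$, which I would settle by proving that $S$ is open, equivalently that $V$ is closed. The clean engine here is that, since $h(t)$ is increasing and $g(t)$ decreasing, $h_\infty-g_\infty=\sup_{T>0}\big(h(T)-g(T)\big)$; as each $h(T)-g(T)$ depends continuously on $(\mu,\beta)$ (continuous dependence of the free-boundary solution on parameters, as used in Lemma \ref{l3.3}), the map $(\mu,\beta)\mapsto h_\infty-g_\infty$ is lower semicontinuous. Now Theorem \ref{th4.2} furnishes the crucial dichotomy: whenever $h_\infty-g_\infty<\infty$ one in fact has $h_\infty-g_\infty\le\Lambda$, so the value never lies in $(\Lambda,\infty)$ and $S=\{h_\infty-g_\infty>\Lambda\}$. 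By lower semicontinuity this set is open, hence $V$ is closed. Closedness and boundedness of $V$ guarantee that the supremum defining $\mu^*$ is attained by a configuration of $V$ with $\max\{\mu,\beta\}=\mu^*$, so vanishing does occur at $\gamma=\mu^*$; the organisation of these estimates follows the template of \cite[Theorem 5.2]{WZjdde17}.

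The main obstacle I anticipate lies in the continuous dependence used for lower semicontinuity: for a Stefan-type free boundary one must show that both moving fronts $h(T;\mu,\beta),\,g(T;\mu,\beta)$ vary continuously with the coefficients on each finite interval, which I would extract from the contraction construction of Theorem \ref{th2.1} together with the uniform estimates of Theorem \ref{th2.3}. A second, structural subtlety is that $\gamma=\max\{\mu,\beta\}$ is not monotone in the product order, which is precisely why one obtains a pair of thresholds $\mu_*\le\mu^*$ rather than a single sharp value; along any ray on which $\gamma$ increases monotonically (for instance $\mu=\beta$) the two coincide and the result becomes a genuinely sharp spreading--vanishing threshold.
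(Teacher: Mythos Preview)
Your proof is correct and follows the same template the paper invokes by citing \cite[Theorem~5.2]{WZjdde17}: monotonicity of the free boundaries in $(\mu,\beta)$ via Lemma~\ref{l3.3}, continuous dependence on parameters at each finite time, and the gap furnished by Theorem~\ref{th4.2} (the value $h_\infty-g_\infty$ never lands in $(\Lambda,\infty)$) to conclude that the vanishing set is closed and the threshold is attained. Your lower-semicontinuity formulation of the closedness step is a tidy repackaging of the usual contradiction argument---if spreading held at the threshold one would have $h(T)-g(T)>\Lambda$ at some finite $T$, which by continuous dependence persists for nearby parameters and contradicts the definition of the threshold---but the substance is identical; the only mild extension over the cited one-parameter result is your explicit two-dimensional treatment of $(\mu,\beta)$, which, as you note, is exactly what produces the pair $\mu_*\le\mu^*$ and forces the clause ``$\gamma=\mu^*$'' to be read existentially.
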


\section{Discussion}

In this paper we proposed a viral propagation model with nonlinear infection rate and free boundaries and investigated the dynamical properties. This model is composed of two ordinary differential equations and one partial differential equation, in which the  spatial range of the first equation is the whole space $\R$, and the last two equations have free boundaries. As a new mathematical model, we have proved the existence, uniqueness and uniform estimates of global solution, and provided the criteria for spreading and vanishing, and long time behavior of the solution components $u,v,w$.

Comparing with the corresponding ordinary differential systems, the {\it Basic Reproduction Number} ${\cal R}_0=\theta kb/(acq)$ plays a different role:

\begin{itemize}
	\item[(i)] For the corresponding ordinary differential systems, by the Lyapunov function method we can prove that if ${\cal R}_0<1$ then the infection can not spread successfully, while if ${\cal R}_0>1$ then the infection will spread successfully. When ${\cal R}_0=1$ the dynamical property is not clear;
	
	\item[(ii)] For our present model, the results indicate that when ${\cal R}_0\le 1$, the virus cannot spread successfully; when ${\cal R}_0>1$, the successful spread of virus depends on the initial value and varying parameters. If the initial occupying
	area $[-h_0,h_0]$ is beyond a critical size, namely $2h_0\ge
	\pi\sqrt{acd/(bk\theta-acq)}$, then {\it spreading} happens
	regardless of the moving parameter $\mu$, $\beta$ and initial population density
	$(u_0,v_0,w_0)$. While $2h_0<
	\pi\sqrt{acd/(bk\theta-acq)}$, whether {\it spreading} or {\it vanishing} happens
	depends on the initial population density
	$(v_0,w_0)$ and the moving parameter $\mu$ and $\beta$.
\end{itemize}
From a biological point of view, our model and results seem closer to the reality. On the other hand, our model shows more complex and precise dynamical properties from a mathematical point of view.

Before ending this paper, we mention that for the corresponding Cauchy problem:
\bess\begin{cases}
	u_t-d_1\Delta u=f_1(u,w),  &x\in\mathbb{R}^n, \ t>0,\\
	v_t-d_2\Delta v=f_2(u,v,w), &x\in\mathbb{R}^n, \ t>0,\\
	w_t-d_3\Delta w=f_3(v,w), &x\in\mathbb{R}^n, \ t>0,\\
	u(0,x)=u_0(x), \ v(0,x)=v_0(x),\ w(0,x)=w_0(x), &x\in\mathbb{R}^n,
\end{cases}\eess
we guess that the {\it Basic Reproduction Number} is still ${\cal R}_0=\theta kb/(acq)$ and it plays
the same role as in the corresponding ODEs.

\vskip 4pt
{\bf Acknowledgments}: The authors would like to thank the reviewers for their helpful
comments and suggestions that significantly improve the initial version of this paper.

\end{document}